\documentclass[reqno,12pt,letterpaper]{amsart}
\usepackage{amsmath,amssymb,amsthm,graphicx,mathrsfs,url}
\usepackage[usenames,dvipsnames]{color}
\usepackage[colorlinks=true,linkcolor=Red,citecolor=Green]{hyperref}
 
\setlength{\textheight}{8.50in} \setlength{\oddsidemargin}{0.00in}
\setlength{\evensidemargin}{0.00in} \setlength{\textwidth}{6.08in}
\setlength{\topmargin}{0.00in} \setlength{\headheight}{0.18in}
\setlength{\marginparwidth}{1.0in}
\setlength{\abovedisplayskip}{0.2in}
\setlength{\belowdisplayskip}{0.2in}
\setlength{\parskip}{0.05in}
\usepackage{tikz}
\DeclareMathAlphabet{\pazocal}{OMS}{zplm}{m}{n}

\usepackage{graphicx}
\usepackage[colorlinks=true]{hyperref}
\usepackage{color}
\usepackage{amsmath,amsfonts}
\usepackage{amsmath,environ}

\numberwithin{equation}{section}
\usepackage{amsmath, amsthm}
    \usepackage{dsfont}
    \usepackage{fancybox}
    \usepackage{amssymb}

\newcommand{\R}{\mathbb{R}}

\newcommand{\p}{{\partial}}
\newcommand{\olambda}{{\overline{\lambda}}}

\newcommand{\dd}[2]{\dfrac{\partial #1}{\partial #2}}

\newcommand{\Det}{{\text{Det}}}
\newcommand{\PP}{{\mathcal{P}}}
\newcommand{\DD}{{\mathcal{D}}}

\newcommand{\epsi}{\varepsilon}

\newcommand{\te}{\theta}
\newcommand{\lr}[1]{\langle #1 \rangle}
\newcommand{\FF}{\mathcal{F}}

\newcommand{\supp}{\mathrm{supp}}
\newcommand{\trho}{\tilde{\rho}}

\newcommand{\tB}{\widetilde{B}}
\newcommand{\oT}{{\overline{T}}}

\newcommand{\Dd}{\mathbb{D}}

\newcommand{\TT}{\mathcal{T}}
\newcommand{\Ss}{\mathbb{S}}
\newcommand{\Id}{{\operatorname{Id}}}

\newcommand{\Ell}{{\operatorname{Ell}}}

\newcommand{\VV}{{\mathcal{V}}}

\newcommand{\EE}{\mathcal{E}}
\newcommand{\WF}{{\operatorname{WF}}}

\newcommand{\MM}{\mathcal{M}}
\newcommand{\UU}{{\mathcal{U}}}
\newcommand{\LL}{{\mathcal{L}}}
\newcommand{\HH}{\mathcal{H}}

\newcommand{\SSS}{{\mathcal{S}}}

\newcommand{\systeme}[1]{\left\{ \begin{matrix} #1 \end{matrix} \right.}
\newcommand{\arXiv}[1]{\href{https://arxiv.org/abs/#1}{arXiv:#1}}

\newcommand{\tH}{{\widetilde{H}}}
\newcommand{\dive}{{\operatorname{div}}}

\newcommand{\C}{\mathbb{C}}

\newcommand{\az}{\alpha}

\newcommand{\tchi}{\tilde{\chi}}

\newcommand{\Trace}{{\operatorname{Tr}}}
\newcommand{\Op}{{\operatorname{Op}}}

\renewcommand{\Re}{\operatorname{Re}}
\renewcommand{\Im}{\operatorname{Im}}
\newcommand{\Mm}{\mathbb{M}}
\newcommand{\Res}{\mathrm{Res}}

\newcommand{\tP}{\widetilde{P}}

\newcommand{\tLambda}{{\widetilde{\Lambda}}}

\newcommand{\Range}{{\mathrm{Range}}}

\newcommand{\AAA}{\mathcal{A}}

\newcommand{\de}{\mathrel{\stackrel{\makebox[0pt]{\mbox{\normalfont\tiny def}}}{=}}}

\title[Pollicott--Ruelle resonances via kinetic Brownian motion]{Pollicott--Ruelle resonances via kinetic Brownian motion}
\author{Alexis Drouot}
\date{\today}

\NewEnviron{equations}{%
\begin{equation}\begin{gathered}
  \BODY
\end{gathered}\end{equation}
}

\NewEnviron{equations*}{%
\begin{equation*}\begin{gathered}
  \BODY
\end{gathered}\end{equation*}
}

\newtheorem{thm}{Theorem}

\newtheorem{rmk}{Remark}[section]
\newtheorem{lem}{Lemma}[section]
\newtheorem{proposition}[lem]{Proposition}

\newtheorem{theorem}[thm]{Theorem}
\theoremstyle{definition}

\begin{document}
\maketitle

\begin{abstract} The kinetic Brownian motion on the cosphere bundle of a Riemannian manifold $\Mm$ is a stochastic process that models the geodesic equation perturbed by a random white force of size $\epsi$. When $\Mm$ is compact and negatively curved, we show that the $L^2$-spectrum of the infinitesimal generator of this process converges to the Pollicott--Ruelle resonances of $\Mm$ as $\epsi$ goes to $0$. \end{abstract}

\section{Introduction}\label{sec:1}

We consider a smooth compact Riemannian manifold $\Mm$ with negative sectional curvatures and cosphere bundle $S^*\Mm$. The generator of the geodesic flow $H_1 \in TS^*\Mm$ has the Anosov property and, on suitable spaces, $P_0 \de \frac{1}{i} H_1$ has a discrete spectrum with eigenvalues called Pollicott--Ruelle resonances. We denote it by $\Res(P_0)$ These complex numbers appear in expansions of classical correlations -- see Tsuji \cite{Ts} and Nonnemacher--Zworski \cite{NZ}. We refer to \S \ref{sec:2.3} for precise definitions. 

Recently, several authors studied a stochastic process on $S^*\Mm$ called the kinetic Brownian motion -- see Franchi--Le Jan \cite{LF}, Grothaus--Stilgenbauer \cite{GS2}, Angst--Bailleul--Tardif \cite{ABT} and Li \cite{Li}. In contrast with the Langevin process, the kinetic Brownian motion models diffusive phenomena with finite speed of propagation. Its infinitesimal generator is $i P_\epsi \de H_1+\epsi \Delta_\Ss$, where $\Delta_\Ss \geq 0$ is the vertical spherical Laplacian -- see \S \ref{sec:2.1}.

In this paper, we investigate the convergence of the $L^2$-spectrum $\Sigma(P_\epsi)$ of $P_\epsi$, as $\epsi$ goes to $0^+$. Although the $L^2$-spectrum of $P_0$ is absolutely continuous and equal to $\R$, we have:

\begin{theorem}\label{thm:0} The set of accumulation points of $\Sigma(P_\epsi)$ as $\epsi \rightarrow 0^+$ is equal to $\Res(P_0)$.
\end{theorem}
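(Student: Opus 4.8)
The plan is to put $P_0$ and $P_\epsi$ on a common footing using the anisotropic Sobolev space technology for Anosov flows (Faure--Sj\"ostrand, Dyatlov--Zworski; see \S\ref{sec:2.3}), and then to transfer a single uniform microlocal estimate between them. Fix an order function $G$ on $T^*(S^*\Mm)$, homogeneous of degree $0$ and adapted to the geodesic flow $H_1$ (negative near the unstable conormal $E_u^*$, positive near the stable conormal $E_s^*$, and monotone along the flow in between), with $|G|\le N$; let $\HH=\HH_G$ be the associated anisotropic space, so that $H^N\subset\HH\subset H^{-N}$ with the second inclusion compact into $H^{-N-1}$. On $\HH$, for $z$ in a half-plane $\{\Im z>-C_0\}$ with $C_0=C_0(N)\to+\infty$, the operator $P_0-z$ is index-zero Fredholm and its inverse is meromorphic with poles exactly at $\Res(P_0)$. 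On the other side, since $H_1$ together with the fibre vector fields bracket-generate $TS^*\Mm$, the operator $iP_\epsi=H_1+\epsi\Delta_\Ss$ is hypoelliptic on the compact manifold $S^*\Mm$ with a uniform subelliptic gain; hence $P_\epsi$ has compact resolvent, $\Sigma(P_\epsi)$ is discrete and consists of eigenvalues with smooth eigenfunctions, and replacing $L^2$ by $\HH$ does not change it. It then suffices to prove: (a) if $\epsi_j\to0^+$, $z_j\in\Sigma(P_{\epsi_j})$ and $z_j\to z_0$, then $z_0\in\Res(P_0)$; and (b) every $z_0\in\Res(P_0)$ is such a limit.

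The heart of the matter is a uniform estimate: there is $\epsi_0>0$ such that for each $z_0\notin\Res(P_0)$ there are a neighbourhood $U\ni z_0$ and $C>0$ with
\[
  \|u\|_{\HH}\;\le\;C\big(\|(P_\epsi-z)u\|_{\HH}+\|u\|_{H^{-N-1}}\big),\qquad \epsi\in[0,\epsi_0],\ z\in U,\ u\in C^\infty(S^*\Mm),
\]
together with the analogous estimate for the formal adjoint on $\HH_{-G}$. At $\epsi=0$ this is the standard estimate away from the resonance set. The key structural fact is that the extra second-order term is \emph{friendly}: its symbol is $\epsi$ times the nonnegative fibre momentum $|\eta|_\Ss^2$ (up to sign), so in the positive-commutator arguments behind the elliptic, propagation and radial-point estimates it contributes either terms of the favourable sign or terms dominated by $\epsi\Delta_\Ss+C$; moreover, because $G$ is homogeneous of degree $0$ and does not see the fibre momenta, conjugating by the weight $e^{G^w}$ changes $\epsi\Delta_\Ss$ only by $\epsi$ times a first-order operator, again absorbed by $\epsi\Delta_\Ss+C$ for $\epsi$ small. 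Granting the estimate, (a) follows by contradiction: if $z_j\to z_0\notin\Res(P_0)$ with $P_{\epsi_j}u_j=z_ju_j$ and $\|u_j\|_{\HH}=1$, the estimate forces $\|u_j\|_{H^{-N-1}}\ge 1/C$; passing to a subsequence, $u_j\rightharpoonup u$ weakly in $\HH$ and strongly in $H^{-N-1}$, so $u\in\HH$ and $u\neq0$; since $(P_0-z_j)u_j=i\epsi_j\Delta_\Ss u_j\to0$ in $\mathcal{D}'$, we get $(P_0-z_0)u=0$, contradicting $z_0\notin\Res(P_0)$. The same reasoning gives $\|(P_\epsi-z)^{-1}\|_{\HH\to\HH}\le C$ uniformly on $U\times[0,\epsi_0]$, and, with the adjoint estimate, it also underlies the index-zero Fredholm property and the $L^2$/$\HH$ comparison asserted above.

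For (b) I would run a Grushin-problem argument in the spirit of the stochastic-stability literature. Given $z_0\in\Res(P_0)$ of algebraic multiplicity $m$, fix finite-rank smoothing $R_-\colon\C^m\to C^\infty$ and $R_+\colon\mathcal{D}'\to\C^m$ so that $\begin{pmatrix}P_0-z&R_-\\ R_+&0\end{pmatrix}$ is invertible on $\HH\oplus\C^m$ for $z$ near $z_0$, with holomorphic Schur complement $E^0_{-+}(z)$ whose determinant vanishes exactly to order $m$ at $z_0$ and nowhere else nearby. Applying the uniform estimate to the same bordered operator with $P_\epsi$ in place of $P_0$ (the borders are finite rank and contribute nothing new), the contradiction argument shows $\begin{pmatrix}P_\epsi-z&R_-\\ R_+&0\end{pmatrix}$ is invertible on $\HH\oplus\C^m$, uniformly for $z$ near $z_0$ and $\epsi\in(0,\epsi_0]$. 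Solving the bordered system $(P_\epsi-z)u_\epsi+R_-u_{-,\epsi}=0$, $R_+u_\epsi=v_-$, using the uniform bound to extract weak-$\HH$ and strong $H^{-N-1}$ limits and passing $\epsi\to0$ (again $\epsi\Delta_\Ss u_\epsi\to0$ in $\mathcal{D}'$), identifies the limit with the $\epsi=0$ solution; hence $E^\epsi_{-+}(z)\to E^0_{-+}(z)$ locally uniformly near $z_0$. Since $z\in\Sigma(P_\epsi)$ precisely when $\det E^\epsi_{-+}(z)=0$, Hurwitz's theorem yields, for $\epsi$ small, exactly $m$ eigenvalues of $P_\epsi$ in any fixed small neighbourhood of $z_0$; shrinking the neighbourhood puts $z_0$ in the accumulation set. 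Together with (a) this is the theorem.

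The main obstacle is the uniform estimate: everything soft above is routine once it is available, whereas proving it means redoing the Anosov elliptic/propagation/radial-point estimates with $\epsi\Delta_\Ss$ switched on and tracking the $\epsi$-dependence of every remainder. The delicate points are: arranging that $e^{-G^w}\,\epsi\Delta_\Ss\,e^{G^w}-\epsi\Delta_\Ss$ is $\epsi$ times an operator controlled by $\epsi\Delta_\Ss+C$ (here one uses that $G$ is homogeneous of degree $0$ and independent of the fibre momenta, so $[\Delta_\Ss,G^w]$ is only first order in $\eta$); checking near the conic radial sets $E_u^*,E_s^*$ that the added term does not disturb the source/sink dynamics governing the regularity thresholds, which it does not, thanks to the sign of its symbol, but the remainder must be estimated uniformly down to $\epsi=0$; and ensuring that the subelliptic gain giving discreteness of $\Sigma(P_\epsi)$ interacts well with the anisotropic weight, so that the $L^2$- and $\HH$-pictures of $\Sigma(P_\epsi)$ genuinely coincide.
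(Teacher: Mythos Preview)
Your overall architecture is the paper's: anisotropic spaces adapted to the Anosov flow, a uniform-in-$\epsi$ Fredholm estimate, and then a Hurwitz/Grushin argument for convergence of zeros. You also correctly locate the only real difficulty in the uniform estimate. Where your sketch diverges from the paper, and where it breaks, is in the proposed mechanism for that estimate.

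First, a concrete error: the anisotropic weight $G$ is \emph{not} independent of the fibre momenta. The function $m(x,\xi)$ in \eqref{Eq:3c} must equal $+1$ near $E_s^*$ and $-1$ near $E_u^*$, and for the geodesic flow these conormal bundles are not horizontal; they have nontrivial vertical components. So the commutator $[\Delta_\Ss,G^w]$ is a genuine first-order operator in all of $T^*(S^*\Mm)$, not something that drops a vertical order. More seriously, $\epsi\Delta_\Ss$ is an $\epsi S^2$ symbol, not a uniform $S^1$ symbol, so it does not fit into the radial-point/propagation framework of \cite{DZ1}, which is built for principal symbols homogeneous of degree~$1$. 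Your plan to ``redo the Anosov estimates with $\epsi\Delta_\Ss$ switched on'' and absorb remainders by $\epsi\Delta_\Ss+C$ is exactly what the paper shows \emph{cannot} be done naively: see \eqref{eq:3c}--\eqref{eq:7da}, where the conjugation error produces terms $\epsi\,\Psi^{0+}\cdot\epsi X_j$ that are only controlled \emph{after} invoking the maximal hypoelliptic estimate of Theorem~\ref{thm:1}.

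The paper's actual route is a two-scale argument with $0<\epsi\le h$. One first proves, by lifting to the orthonormal coframe bundle $O^*\Mm$ and a positive-commutator/second-microlocalization argument (all of \S\ref{sec:3}), a maximal hypoelliptic bound \eqref{eq:5k} uniform in $\epsi$. This is then used to replace $\epsi\Delta_\Ss$ by its frequency-truncated version $\epsi\chi_1(\epsi^2\Delta)\Delta_\Ss$, which \emph{is} a uniform $\Psi^1_h$ operator (see \eqref{eq:0f}); only this truncated operator $\tP_\epsi(\lambda)$ is fed into the radial/propagation machinery (Proposition~\ref{prop:3}). Theorem~\ref{thm:2} glues the two regimes. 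Your single-scale sketch has no analogue of the high-frequency hypoelliptic estimate, and without it the commutator terms you want to absorb are of the wrong order to close the argument.
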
 

Theorem \ref{thm:6} below is a finer statement: the spectral projections of $P_\epsi$ depend smoothly on $\epsi$; and if each Pollicott--Ruelle resonance of $P_0$ is simple, the $L^2$-eigenvalues of $P_\epsi$ admit a full expansion in powers of $\epsi$. Remark \ref{rem:1} analyzes the convergence as $\epsi \rightarrow 0^-$. 

We proved Theorem \ref{thm:0} when $\Mm$ is an orientable surface in an earlier version \cite{D2} of this paper.

\subsection*{Motivation and outline of proof} Dyatlov--Zworski \cite{DZ2} showed that the Pollicott--Ruelle resonances of an Anosov vector field $X$ on a Riemannian manifold are the limits as $\epsi \rightarrow 0^+$ of the $L^2$-eigenvalues of $\frac{1}{i}(X + \epsi \Delta)$. From the point of view of partial differential equations, this realizes resonances as viscosity limits. From the point of view of probability theory, this indicates stochastic stability of Pollicott--Ruelle resonances, because the operator $\frac{1}{i}X + i \epsi \Delta$ generates the stochastic differential equation
\begin{equation}\label{Eq:0a}
\p_t\Phi_t = - X(\Phi_t) - \sqrt{2\epsi} B(t), \ \ \ \   \Phi_0 = \Id_\MM,
\end{equation}
where $B(t)$ is a Brownian motion on $\MM$. Their approach also shows that the $L^2$-eigenvalues of $\frac{1}{i}X + i \epsi \Delta$ converge to complex conjugates of Pollicott--Ruelle resonances as $\epsi \rightarrow 0^-$. This fact also holds here, see Remark \ref{rem:1}.

The geodesic flow on the cosphere bundle $S^*\Mm$ of a Riemannian manifold $\Mm$ is a fundamental example of Anosov flow. If $X$ denotes the generator of the geodesic flow, \eqref{Eq:0a} is a random perturbation of the geodesic equation. The perturbative term in \eqref{Eq:0a} acts on both momenta and positions. As was first modeled by Langevin's equation \cite{L08}, a physical random perturbation created by collisions should only act on the momentum variables. A generalization of Langevin's equation to cotangent bundles $T^*\Mm$ was studied in J$\o$rgensen \cite{J}, Soloveitchik \cite{So} and Kolokoltsov \cite{K}. 

In this paper, we remain on the cosphere bundle $S^*\Mm$ and we consider the \textit{kinetic Brownian motion}. This stochastic process is a random perturbation in the momentum random of the geodesic equation on $S^*\Mm$. It models diffusions with constant speed of propagation, and has generator $H_1 + \epsi \Delta_\Ss$. The kinetic Brownian motion was first introduced in Franchi--Le Jan \cite{LF}, as an extension of Langevin's equation in general relativity: it models the relativistic motion of random particles, whose speed has to be bounded by the speed of light. Grothaus--Stilgenbauer \cite{GS2} extended the construction to cosphere bundles of Riemannian manifolds, with applications to industry. Li \cite{Li} showed the first perturbative results in the small-and-large white force limit (respectively, $\epsi \rightarrow 0$ and $\epsi \rightarrow \infty$). Angst--Bailleul--Tardif \cite{ABT} improved upon Li's result and derived asymptotic in the context of rotationally invariant manifolds. We refer to \S\ref{sec:2.1} for precise definitions.

Dolgopyat--Liverani \cite{DL} studied another perturbation of the geodesic equation. They considered the geodesic motion of particles, coupled with an interaction of size $\epsi$. When the initial data is random and $\epsi$ goes to $0$, they showed that a suitable rescaling of the energy at time $t$ solves an explicit stochastic differential equation. Bernadin et al. \cite{BeL} obtained a formal expansion of the heat conductivity for systems of weakly coupled random particles. Conceptually, both results can be seen as a step towards deriving macroscopic equations from principles of microscopic dynamics. 

This paper aims to generalize the main result of Dyatlov--Zworski \cite{DZ2} to the kinetic Brownian motion. In contrast with \cite{DZ2}, the operator $P_\epsi = \frac{1}{i}(H_1+\epsi \Delta_\Ss)$ is hypoelliptic instead of being elliptic. An earlier version \cite{D2} contains a proof of Theorem \ref{thm:0} when $\Mm$ is an orientable surface. It can be seen as an introduction to the present paper. The technical details are simpler there, because in that case $\Delta_\Ss = -V^2$, with $V$ the generator of the circle action on the fibers of $S^*\Mm$.

The lack of ellipticity of $P_\epsi$ creates serious new difficulties that we overcome by showing that the operator $P_\epsi$ is maximally hypoelliptic in the regime $\epsi \rightarrow 0$, see Theorem \ref{thm:1}. For technical reasons, we will lift $P_\epsi$ to an operator $\tP_\epsi$ acting on functions on the orthonormal coframe bundle of $\Mm$. The proof continues with the Lebeau \cite{L}, where the maximal hypoellipticity of Bismut's hypoelliptic Laplacian \cite{B} is shown. Lebeau ingeniously uses certain commutation relations to reduce his study to the case of the model operator $x_1^2 D_{x'}^2 + D_{x_1}$, microlocally near $(0, x', 0, \xi')$, $\xi' \neq 0$. In our approach, we bypass the microlocal reduction and we work directly with $P_\epsi$. We replace Lebeau's main step with a positive commutator argument. This yields a maximal hypoellipticity result for $\tP_\epsi$, that descends to an estimate for $P_\epsi$. Lifting geometric equations to the orthonormal frame bundle has been an efficient technique in probability theory, starting with the pioneering constructions of stochastic processes on manifolds by Elworthy \cite{El}. It was used in both Li \cite{Li} and Angst--Bailleul--Tardif \cite{ABT} to show asymptotic results for the kinetic Brownian motion.

The remainder of the proof of Theorem \ref{thm:0} is similar to \cite{DZ2}.  We will decompose the operator $P_\epsi$ in two parts $P^\sharp_\epsi + P^\flat_\epsi$. The first part acts on momentum frequencies greater than $\epsi^{-1}$, and the maximal hypoelliptic estimate will take care of it. For the second part,  we will use the anisotropic Sobolev spaces designed in Faure--Sj\"ostrand \cite{FS} in a modified form due to Dyatlov--Zworski \cite{DZ1}. Their construction relies on Melrose's propagation estimate at radial points \cite{Me}, in the improved version of \cite[Propositions 2.6-2.7]{DZ1}. For the original  version of anisotropic spaces used in Anosov dynamics, see Baladi \cite{Ba}, Liverani \cite{Liv}, Gou\"ezel--Liverani \cite{GoLi} and Baladi--Tsuji \cite{BaTs}. We also mention Vasy \cite{Va} for application of similar anisotropic Sobolev spaces in the context of asymptotically hyperbolic manifolds and general relativity.

The operator $P_\epsi$ can be realized as the restriction of the hypoelliptic Laplacian of Bismut \cite{B} to the cosphere bundle. This connection provides another motivation for the study of $P_\epsi$. Li \cite{Li} and Angst--Bailleul--Tardif \cite{ABT} showed that the kinetic Brownian motion interpolates between geodesic trajectories as $\epsi \rightarrow 0$ and the Brownian motion on $\Mm$ as $\epsi \rightarrow \infty$ (after projection and rescaling). This dramatically echoes Bismut--Lebeau's motivation to study the hypoelliptic Laplacian, obtained in \cite{BL1} as an operator interpolating between the generator of the geodesic flow and the Laplacian on $\Mm$ (after rescaling and projection). For the corresponding interpretation in probability theory, see Bismut \cite{B2}. Improving upon work of Bismut \cite{BL2}, Shen \cite{S} recently obtained far-reaching applications of the hypoelliptic Laplacian, including a proof of Fried's conjecture \cite{Fr} for maximally symmetric spaces.

Baudoin--Tardif \cite{BT} showed exponential convergence of the heat operator $e^{-itP_\epsi}$ to equilibrium: there exists $\nu_\epsi > 0$ such that for every $u \in S^\infty(S^*\Mm)$, 
\begin{equation*}
\left|e^{-itP_\epsi} u-\int_{S^*\Mm} u\right| \leq Ce^{-\nu_\epsi t}\left|u-\int_{S^*\Mm} u\right|.
\end{equation*}
Because of the connection of $P_\epsi$ with the Laplacian on $\Mm$, Baudoin and Tardif expected that the optimal value of $\nu_\epsi$ converges as $\epsi \rightarrow \infty$ to the first eigenvalue of the \textit{non-negative} Laplacian on $\Mm$. Though the explicit value of $\nu_\epsi$ derived there converges to $0$ as $\epsi \rightarrow \infty$. When $\Mm$ is negatively curved, we conjecture that the optimal value of $\nu_\epsi$ converges as $\epsi \rightarrow 0$ to the largest imaginary parts of Pollicott--Ruelle~resonances~of~$\frac{1}{i}H_1$.

When $\Mm$ is not negatively curved, we can still study the accumulation points of the $L^2$-eigenvalues of $P_\epsi$ as $\epsi \rightarrow 0$.  Already in the case of the $2$-torus, the behavior of this spectrum is quite mysterious. See (in a slightly different context) \cite[Figure 3]{DZ2} and the discussion following it, originating from Galtsev--Shafarevich \cite{GS06}. The general case is far from being understood. Recently, Dyatlov--Zworski \cite{DZ3} showed a deep connection between Pollicott--Ruelle resonances and topology: the order of vanishing of the Ruelle zeta function at $0$ determines the genus of a negatively curved surface. We believe that the spectrum of $P_\epsi$ relates closed geodesics and topology, even when $\Mm$ is not negatively curved. The maximal hypoelliptic estimate \eqref{eq:5k} holds with no restrictions on the sign of the curvature. However, the methods of \S\ref{sec:5} are strictly restricted to the negative curvature case.

\noindent \textbf{Acknowledgment.} We are very grateful to Maciej Zworski for suggesting the problem and for his invaluable guidance. We would also like to thank Semyon Dyatlov and Gilles Lebeau for various fruitful discussions. This research was partially supported by the Fondation CFM and the National Science Foundation grant DMS-1500852.

\section{Preliminaries}

\subsection{The generator of the kinetic Brownian motion}\label{sec:2.1} Let $\Mm$ be a smooth compact Riemannian manifold of dimension $d \geq 2$ and $S^*\Mm = \{(z,\zeta^1) \in T^* \Mm : |\zeta^1|=1\}$ be its cosphere bundle -- the notation $\zeta^1$ instead of $\zeta$ for the dual variable of $z$ will be clear later. The restriction of the Liouville $1$-form $z \cdot d\zeta^1$ to $S^*\Mm$ is a contact form on $S^*\Mm$, denoted $\az$. Its Reeb vector field $H_1$ generates the geodesic flow. In particular, $\az(H_1) = 1$, $d\az(H_1,\cdot) = 0$, and $H_1$ is divergence-free with respect to the Liouville measure $\mu = \az \wedge (d\az)^{d-1}$. The $L^2$-norm with respect to this measure will be denoted by $|\cdot|$. Alternatively, $H_1$ is the restriction to $T^*\Mm$ of the Hamiltonian vector field of the function $\frac{1}{2}|\zeta^1|^2$, with respect to the canonical symplectic form $dz \wedge d\zeta^1$ on $T^*\Mm$.

For every $z \in \Mm$, the fiber $T_z^*\Mm$ admits a Euclidean structure and the fiber $S^*_z\Mm$, provided with the induced metric, is a Riemannian submanifold of $T_z^*\Mm$. The \textit{non-negative} Laplacian on $S_z^*\Mm$ is a differential operator $\Delta_\Ss(z) : C^\infty(S^*_z\Mm) \rightarrow C^\infty(S^*_z\Mm)$. Varying $z$ we obtain a differential operator $\Delta_\Ss : C^\infty(S^*_z\Mm) \rightarrow C^\infty(S^*_z\Mm)$ called the spherical vertical Laplacian. Similarly there is a spherical vertical gradient operator $\nabla_\Ss : C^\infty(S^*\Mm) \rightarrow C^\infty(TS^*\Mm)$, defined on each fiber $S_z^*\Mm$ as the standard gradient.

Let $P_\epsi$ be the operator
\begin{equation*}
P_\epsi \de \dfrac{1}{i}(H_1 + \epsi \Delta_\Ss) = \dfrac{1}{i}H_1 - i\epsi \Delta_\Ss,
\end{equation*}
with $L^2$-domain $D(P_\epsi) \de \{ u \in L^2 : \ P_\epsi u \in L^2\}$ -- here $P_\epsi u$ is seen as a distribution. Angst--Bailleul--Tardif \cite{ABT} call $P_\epsi$ the generator of the \textit{kinetic Brownian motion}. In \S \ref{sec:2.2.4} below we compute certain Lie brackets, showing that $P_\epsi$ satisfies H\"ormander's condition \cite{Ho} for hypoellipticity. The Rothschild--Stein theory of hypoelliptic operators \cite[\S 18]{RS} yields the subelliptic estimate \eqref{eq:0pa}: there exists a constant $c_\epsi > 0$ such that $|u|_{H^{2/3}} \leq c_\epsi(|P_\epsi u|+|u|)$. A significant part of this paper, \S \ref{sec:3}, studies the behavior of $c_\epsi$ as $\epsi \rightarrow 0$ when $H^{2/3}$ is replaced by its semiclassical version $H^{2/3}_\epsi$.

The operator $P_\epsi$ is semibounded: $\Re(\lr{iP_\epsi u,u}) \geq 0$. Combined with the hypoellipticity of $P_\epsi$ and the compactness of $S^*\Mm$, this shows that $P_\epsi$ has a discrete spectrum on $L^2$. This paper studies the accumulation points as $\epsi \rightarrow 0$ of the $L^2$-eigenvalues of $P_\epsi$ when $\Mm$ has negative curvature.

\subsection{Operators on frame bundles} 

This section reviews Cartan's lifting process from the cosphere bundle $S^*\Mm$ to the bundle of orthonormal frames $O^*\Mm$. Angst--Bailleul--Tardif \cite{ABT} and Li \cite{Li} previously used it to show asymptotic of the kinetic Brownian motion in the limits $\epsi \rightarrow 0, \infty$. We mention that when $\Mm$ is an orientable surface, $O^*\Mm \equiv S^*\Mm \times \{\pm 1\}$ and this lifting process is unnecessary. This simplifies the technical aspects in the earlier version \cite{D2} of this paper.

\subsubsection{Horizontal and vertical vector fields.} The space of frames at $z \in \Mm$ -- denoted $\FF_z^*\Mm$ -- is the vector space of linear maps $\zeta : \R^d \rightarrow T_z^*\Mm$. At this point $\zeta$ is not required to be orthogonal nor an invertible. The space $\FF_z^*\Mm$ is a Euclidean when provided with the scalar product $(\zeta,\zeta') \mapsto \Trace(\zeta^* \zeta')$. Varying the base point $z$ we obtain a vector bundle $\FF^*\Mm$ over $\Mm$ which admits a Riemannian structure.

For $(z_0,\zeta_0) \in \FF^*\Mm$, a vector $X_0 \in T_{z_0,\zeta_0}\FF^*\Mm$ is said to be vertical if $X_0$ is tangent to the fiber $\FF_{z_0}^*\Mm$. A smooth vector field $X \in T\FF^*\Mm$ is vertical if $X(z_0,\zeta_0)$ is vertical for all $(z_0,\zeta_0) \in \FF^*\Mm$. A curve $t \mapsto (z_t,\zeta_t) \in \FF^*\Mm$ is said to be horizontal if for all $e \in \R^d$, $\zeta_t(e)$ (which belongs to $T_{z_t}\Mm$) is parallel along $z_t$ with respect to the Levi--Civita connection. A vector $X_0 \in T_{z_0,\zeta_0}\FF^*\Mm$ is horizontal if there exists a horizontal curve $(z_t,\zeta_t)$ with $\p_t (z_t,\zeta_t)(0) = X_0$; a smooth vector field $X \in T\FF^*\Mm$ is horizontal if $X(z,\zeta)$ is horizontal for every $(z,\zeta) \in \FF^*\Mm$. 

The bundle of orthonormal frames $O^*\Mm$ is the subbundle of $\FF^*\Mm$ with fibers formed of orthogonal maps $\zeta : \R^d \rightarrow T_z^*\Mm$. Since parallel transport preserves angles, the Levi--Civita derivative of an orthogonal frame along a curve  is still an orthogonal frame. Vertical and horizontal vector fields in $TO^*\Mm$ are defined similarly as before. We also observe that $O^*\Mm$ is a bundle over $S^*\Mm$, provided with the projection $\pi_\Ss : (z,\zeta) \mapsto (z,\zeta(e_1))$, where $e_1=(1,0,...,0) \in \R^d$.

Geodesics on $\Mm$ are identified with integral curves of the vector field $H_1$ defined in \ref{sec:2.1}; the geodesic flow is then $\exp(tH_1)$. The vector field $H_1$ lifts to a horizontal vector field $\tH_1$ on $O^*\Mm$ defined as follows. Fix $(z_0,\zeta_0) \in O^*\Mm$ and let $(z_0,\zeta_0^1) = (z_0,\zeta_0(e_1))$ be its projection on $S^*\Mm$; let $(z_t,\zeta_t^1) = \exp(tH_1)(z_0,\zeta_0(e_1))$ be the geodesic starting at $(z_0,\zeta_0^1)$. Parallel transport of $\zeta_0$ along $z_t$ yields a flow $(z_t,\zeta_t) = \Phi_t(z_0,\zeta_0)$ on $\FF^*\Mm$. Since the parallel transport preserves angles this flow actually takes values in $O^*\Mm$. As $(z_t,\zeta_t^1)$ is a geodesic, $\zeta_t^1 = \zeta_t(e_1)$ is the parallel transport of $\zeta_0^1$ along $z_t$ hence $\zeta_t(e_1) = \zeta_t^1$. This shows that $(z_t,\zeta_t)$ is a lift of $(z_t,\zeta_t^1)$ to the orthogonal frame bundle. The vector field $\tH_1 \in T O^*\Mm$ is the generator of $\Phi_t$:
\begin{equation*}
\tH_1(z_0,\zeta_0) \de \left.\dfrac{d}{dt}\right|_{t=0} \Phi_t(z_0,\zeta_0).
\end{equation*} 
The integral curves of $\tH_1$ are horizontal, which shows that $\tH_1$ is horizontal.

Let $E_{k\ell}$ be the matrix $E_{k\ell} \de (\delta_{ki} \delta_{j\ell})_{ij}$ and $A_{k \ell}$ be the anti-symmetric matrix $A_{k\ell} \de E_{k \ell}-E_{\ell k}$. The matrix $e^{tA_{k\ell}}$ is orthogonal and $V_{k \ell}$ is the vector field on $O^*\Mm$ given by
\begin{equation*}
V_{k \ell}(z,\zeta) = \left. \dfrac{d}{dt} \right|_{t=0} \left(z,\zeta \circ e^{tA_{k \ell}}\right).
\end{equation*}
Since the projection of $(z,\zeta \circ e^{tA_{k\ell}})$ on $\Mm$ does not depend on $t$ the vector fiels $V_{k\ell}$ are vertical. The brackets of $\tH_1$ with $V_{1k}$ define new vector fields on $O^*\Mm$: $\tH_k \de [\tH_1, V_{1k}]$.

\subsubsection{Expression in coordinates} A system of coordinates $z_m \in \R^d$ on $\Mm$ lifts canonically to a system of coordinates $(z_m, \zeta_j^1)$ on $T^*\Mm$. If $(z,\zeta) \in \FF^*\Mm$ then $\zeta(e_i) \in T_z^*\Mm$ and we denote by $\zeta^i_j$ its coordinates. This defines a system of coordinates on $\FF^*\Mm$. 

Unless precised otherwise, all the sums appearing below are run through indices from $1$ to $d$. Let $(z,\zeta) \in O^*\Mm \subset \FF^*\Mm$ with coordinates $(z_m,\zeta_i^j)$. Then
\begin{equation*}
\zeta \circ e^{tA_{k\ell}} (e_i) = \zeta  + t\zeta(A_{k\ell}e_i) + O(t^2) = \zeta  + t\delta_{i\ell} \zeta(e_k)-t\delta_{ik}\zeta(e_\ell) + O(t^2).
\end{equation*}
Hence $\zeta \circ e^{tA_{k\ell}}$ has coordinates $\zeta^i_j + t \delta_{i\ell} \zeta^k_j -t \delta_{ik}\zeta^\ell_j + O(t^2)$ and
\begin{equation}\label{eq:0yab}
V_{k\ell} = \sum_{i,j} \left(\delta_{i\ell} \zeta^k_j - \delta_{ik}\zeta^\ell_j \right)\dd{}{\zeta^i_j} = \sum_j \zeta^k_j \dd{}{\zeta^\ell_j} - \zeta_j^\ell \dd{}{\zeta^k_j}.
\end{equation}

Geodesic trajectories $(z,\zeta^1) \in T^*\Mm$ satisfy the equation
\begin{equation*}
\dot{z}_m = \zeta_m^1, \ \ \dot{\zeta}_m^1 = \sum_{i,j} \Gamma_{ij}^m(z) \zeta_i^1 \zeta_j^1
\end{equation*}
while covectors $\eta \in T^*\Mm$ that are parallely transported along $(z,\zeta^1)$ satisfy
\begin{equation*}
\dot{\eta}_m = - \sum_{i,j} \Gamma_{ij}^m \zeta_j^1 \eta_i.
\end{equation*}
This yields the coordinate expression of $\tH_1$, $\tH_m$:
\begin{equation*}
\tH_1 = \sum_i \zeta_i^1 \dd{}{z_i} - \sum_{i,j,k,\ell} \Gamma_{ij}^\ell \zeta^1_i\zeta_j^k  \dd{}{\zeta_\ell^k},  \ \ \ \ 
\tH_m = \sum_i \zeta_i^m \dd{}{z_i} - \sum_{i,j,k,\ell} \Gamma_{ij}^\ell \zeta^m_i\zeta_j^k  \dd{}{\zeta^k_\ell}.
\end{equation*}

\subsubsection{Some differential operators}\label{sec:2.2.3}
Recall that $\Delta_\Ss$ is the operator defined in \S \ref{sec:2.1} and let $\Delta_\Mm$ the \textit{non-negative} Laplacian operator of $\Mm$. The operator $\Delta \de \Delta_\Mm+\Delta_\Ss$ is an elliptic operator acting on $C^\infty(S^*\Mm)$. 

The operators $\Delta_O^V, \Delta_O^H$ acting on $C^\infty(O^*\Mm)$ are defined by $\Delta_O^V \de -\sum_{i, j} V_{ij}^2$, $\Delta_O^H \de -\sum_{i} \tH_i^2$. The operator $\Delta_O \de \Delta_O^H+\Delta_O^V$ is an elliptic operator on $O^*\Mm$. Let $\pi_\Ss : (z,\zeta) \in O^*\Mm \mapsto (z,\zeta(e_1)) \in S^*\Mm$ be the bundle projection of $O^*\Mm$ to $S^*\Mm$. It lifts the operators $\Delta_O^V, \Delta_O^H, \tH_1$ as follows:
\begin{equation}\label{eq:1b}
\Delta_O^V \pi^*_\Ss = \pi^*_\Ss \Delta_\Ss, \ \ \ \ \Delta_O^H \pi^*_\Ss = \pi_\Ss^* \Delta_\Mm, \ \ \ \ \pi_\Ss^* H_1 = \tH_1 \pi_\Ss^*.
\end{equation}

\begin{proof}[Proof of \eqref{eq:1b}] In order to prove the first identity of \eqref{eq:1b} it is enough to show that for every $z \in \Mm$, $\pi_\Ss(z)^* \Delta_\Ss(z) = -\pi_\Ss(z)^*\sum_{i,j} V_{ij}^2(z)$, where $\pi_\Ss(z)$ is the canonical projection $\zeta \in O_z^*\Mm \rightarrow \zeta(e_1) \in S_z^*\Mm$ and $V_{ij}(z) = V_{ij}|_{C^\infty(O_z^*\Mm)}$. 
Normal coordinates centered at $z$ on $\Mm$ induce coordinates $\zeta_i^1$ on $T_z^*\Mm$ (and $\zeta_i^j$ on $\FF_z^*\Mm$). In these coordinates the Euclidean metric on $T_z^*\Mm$ takes the form $\sum_i (d\zeta_i^1)^2$; hence they provide an isometric identification of $S_z^*\Mm$ with $\Ss^{d-1}$, $O_z^*\Mm$ with $O(d)$, and $\FF_z^*\Mm$ with $\R^{d \times d}$. Therefore, it suffices to show that if $\pi_{\Ss^{d-1}} : O(d) \rightarrow \Ss^{d-1}$ is the canonical projection, if $\Delta_{\Ss^{d-1}}$ and $\Delta_{O(d)}$ are respectively the Laplacians on $\Ss^{d-1}$ and $O(d)$ (with respect to the metric induce by the Euclidean structure of $\R^{d\times d}$), then
\begin{equation}\label{eq:5l}
 \Delta_{O(d)} \pi^*_{\Ss^{d-1}}= \pi^*_{\Ss^{d-1}}\Delta_{\Ss^{d-1}}.
\end{equation}
This identity should be available in the literature, though we have found no reference. We prove it below.

Since $\Ss^{d-1} \subset \R^d \subset \R^{d \times d}$, $\Delta_{\Ss^{d-1}}$ can be written as $-\sum_j X_j^2$, where the $X_j$ are the projections of $\p_{\zeta_j^1}$ on $\Ss^{d-1}$ -- see \cite[Theorem 3.1.4]{Hsu}. In coordinates,
\begin{equation}\label{eq:5m}
X_j = \p_{\zeta_j^1} - \sum_k \zeta_j^1 \zeta_j^k \p_{\zeta_j^k}.
\end{equation}
A direct computation combining \eqref{eq:5m} with $\sum_j (\zeta_j^1)^2 = 1$ on $\Ss^{d-1}$ shows that if $u$ is a function on $\R^{d \times d} $ depending only on $(\zeta^1_1, ..., \zeta^1_d)$,
\begin{equation*}
\Delta_{\Ss^{d-1}} u|_{\Ss^{d-1}}   = -\sum_j \dd{^2 u}{{\zeta_j^1}^2} + \sum_{j,k} \zeta_j^1 \zeta_k^1 \dd{^2u}{\zeta_k^1 \p\zeta_j^1}  
      + (d-1) \sum_k \zeta_k^1 \dd{u}{\zeta_k^1}.
\end{equation*}
We similarly compute $\Delta_{O(d)} u|_{O(d)}$. Using \eqref{eq:0yab} and that $u$ depends only on $(\zeta^1_1, ..., \zeta^1_d)$,
\begin{equations*}
\Delta_{O(d)}u|_{O(d)} =  - \sum_{k,\ell} \left( \sum_j \zeta^k_j \dd{}{\zeta^\ell_j} - \zeta_j^\ell \dd{}{\zeta^k_j} \right)^2 u =\sum_{i > 1} \sum_{j,k} \left(\zeta_j^1 \dd{}{\zeta_k^i} - \zeta_j^i \dd{}{\zeta^1_j}\right) \zeta_k^i \dd{u}{\zeta^1_k} \\
 = -\sum_{i > 1} \sum_{j,k} \zeta_j^i \zeta_k^i \dd{^2 u}{\zeta^1_k \p \zeta^1_j} +\sum_{i > 1} \sum_{j,k} \zeta_j^1 \delta_{jk} \dd{u}{\zeta^1_k}  =  -\sum_{i > 1} \sum_{j,k} \zeta_j^i \zeta_k^i \dd{^2u}{\zeta^1_k \p \zeta^1_j} +(d-1) \sum_j \zeta_j^1 \dd{u}{\zeta^1_j}.
\end{equations*}
Because of these formula, proving \eqref{eq:5l} amounts to show that for $\zeta \in O(d)$,
\begin{equation}\label{eq:0u}
\sum_j \dd{^2 u}{{\zeta_j^1}^2} =  \sum_{i,j,k} \zeta_j^i \zeta_k^i \dd{^2u}{\zeta^1_k \p \zeta^1_j}.
\end{equation}
Since $\zeta \in O(d)$, $\zeta^* \in O(d)$ which implies that $\sum_i \zeta_j^i \zeta_k^i = \delta_{jk}$. This relation shows that \eqref{eq:0u} holds on $O(d)$, which proves \eqref{eq:5l} and the first identity of \eqref{eq:1b}.

The second identity in \eqref{eq:1b} is \cite[Proposition 3.1.2]{Hsu}. 

If $(z_t,\zeta_t) = \exp(t\tH_1)(z_0,\zeta_0)$ with $(z_0,\zeta_0) \in O^*\Mm$ then $\pi_\Ss(z_t,\zeta_t)$ is the geodesic starting at $\pi_\Ss(z_0,\zeta_0)$:  $\pi_\Ss(z_t,\zeta_t) = \exp(tH_1) \pi_\Ss(z_0,\zeta_0)$. The identity $\pi_\Ss^* H_1 = \tH_1\pi_\Ss^*$ follows.
\end{proof}

We define $\tP_\epsi \de \frac{1}{i}(\tH_1 + \epsi \Delta_O^V)$. Because of \eqref{eq:1b}, the operator $\tP_\epsi$ is the lift of $P_\epsi$ to the orthogonal coframe bundle: $\tP_\epsi \pi_\Ss^* = \pi_\Ss^* P_\epsi$.

\subsubsection{Commutation identities}\label{sec:2.2.4}

A computation using \eqref{eq:0yab} yields the commutation relation
\begin{equation}\label{eq:1d}
[V_{k\ell},V_{mn}] = \delta_{\ell m}V_{k n} + \delta_{n k} V_{\ell m} + \delta_{km} V_{n\ell} + \delta_{\ell n} V_{mk}.
\end{equation} 

We next study the commutation relations between the $V_{k\ell}$ and $\tH_m$. Fix $z \in \Mm$ together with normal coordinates centered at $z$. In particular, $\Gamma_{ij}^\ell(z) = 0$ and
\begin{equation*}
[V_{k\ell},\tH_m](z) = \sum_{i,j} [\zeta^k_j \p_{\zeta_j^\ell} - \zeta^\ell_j \p_{\zeta^k_j} ,\zeta_i^m \p_{z_i}] = \sum_{i} \delta_{\ell m} \zeta^k_i  \p_{z_i} - \delta_{km} \zeta^\ell_i \p_{z_i} = \delta_{\ell m}\tH_k(z) - \delta_{km} \tH_\ell(z).
\end{equation*}
Since $z$ was arbitrary, this shows that 
\begin{equation}\label{eq:1e}
[V_{k\ell},\tH_m] = \delta_{\ell m}\tH_k - \delta_{km} \tH_\ell.
\end{equation}

We conclude this section by proving that the operators $\Delta_O^V, \Delta_O^H$ enjoy some important commutation properties:
\begin{equation}\label{eq:1c}
[\Delta_O^V, V_{mn}] = 0, \ \ \ \  [\Delta_O^H, \Delta_O^V] = 0, \ \ \ \  [\Delta_\Mm, \Delta_\Ss] = 0.
\end{equation}

\begin{proof}[Proof of \eqref{eq:1c}] We start with the first identity. By \eqref{eq:1d}, $[\Delta_O^V, V_{mn}]$
\begin{equations*}
=-\sum_{k, \ell} \left(\delta_{\ell m} V_{k n} + \delta_{n k} V_{\ell m} + \delta_{km} V_{n\ell} + \delta_{\ell n} V_{mk}\right) V_{k\ell} +  V_{k\ell} \left(\delta_{\ell m} V_{k n} + \delta_{n k} V_{\ell m} + \delta_{km} V_{n\ell} + \delta_{\ell n} V_{mk}\right) \\
 = -\sum_k V_{kn} (V_{km}+V_{mk})  + (V_{km}+V_{mk}) V_{kn} + \sum_\ell V_{\ell n} (V_{\ell m}+V_{m\ell})  + (V_{\ell m}+V_{m\ell}) V_{k\ell} = 0,
\end{equations*}
where we used that $V_{ij} + V_{ji}=0$. 

For the second identity, we first observe that \eqref{eq:1e} implies
\begin{equations*}
[V_{k\ell},\Delta_O^H] = -\sum_m \left(\delta_{\ell m}\tH_k - \delta_{km} \tH_\ell\right) \tH_m + \tH_m \left(\delta_{\ell m}\tH_k - \delta_{km} \tH_\ell\right) \\
   = -\tH_k \tH_\ell + \tH_\ell \tH_k - \tH_\ell \tH_k + \tH_k \tH_\ell = 0.
\end{equations*}
Therefore $\Delta_O^H$ commutes with the $V_{k\ell}$ and a fortiori with $\Delta_O^V$.

The third identity is equivalent to $\pi_\Ss^*[\Delta_\Mm,\Delta_\Ss] = 0$. This is automatically satisfied since $[\Delta_O^H, \Delta_O^V] = 0$ and $\pi_\Ss^*$ intertwines $\Delta_\Mm$ with $\Delta_O^H$ and $\Delta_\Ss$ with $\Delta_O^V$ -- see \eqref{eq:1b}.\end{proof}

\subsubsection{Sobolev equivalence}\label{subsec:1} Recall that $\mu$ is the Liouville measure on $S^*\Mm$, that $\pi_\Ss$ denotes the bundle projection $O^*\Mm \rightarrow S^*\Mm$ and that $\pi_\Ss$ intertwines $\Delta_O$ with $\Delta$ -- see \eqref{eq:1b}. Let $\mu_O$ be a measure on $O^*\Mm$ with
\begin{equation}\label{eq:0o}
v \in  C^\infty(S^*\Mm) \ \Rightarrow \ \int_{S^*\Mm} v d\mu = \int_{O^*\Mm} \pi_\Ss^*v d\mu_O.
\end{equation}
Let $\Lambda_s = (\Id + \epsi^2 \Delta)^{s/2}$, $\tLambda_s = (\Id + \epsi^2 \Delta_O)^{s/2}$. We \textit{define} the semiclassical Sobolev space $H^s_\epsi$ on $S^*\Mm$ (resp. $\tH^s_\epsi$ on $O^*\Mm$) by $H^s_\epsi = \Lambda_{-s} L^2$ (resp. $\tLambda_{-s} L^2$) with the corresponding norm with respect to $\mu$ (resp. $\mu_O$). The identity \eqref{eq:0o} implies 
\begin{equation}\label{eq:7f}
|\pi_\Ss^*u|_{\tH^s_\epsi}^2 = \int_{O^*\Mm} \left|\tLambda_s \pi_\Ss^* u\right|^2 d\mu_O = \int_{S^*\Mm} \left|\Lambda _s u\right|^2 \mu = |u|_{H^s_\epsi}^2.
\end{equation}
The commutation relation \eqref{eq:1e} shows that the vector fields $V_{k\ell}, [V_{1m},\tH_1]$ span the whole tangent bundle $TO^*\Mm$. The operator $\tP_\epsi$ satisfies H\"ormander's condition \cite{Ho} for hypoellipicity, with only one commutator needed. The Rothschild--Stein theory \cite[\S 18]{RS} shows that there exists a constant $C_\epsi > 0$ such that
\begin{equation}\label{eq:0p}
v \in C^\infty(O^*\Mm) \ \Rightarrow \ 
|v|_{\tH^{2/3}_\epsi} \leq C_\epsi(|\tP_\epsi v| + |v|).
\end{equation} 
Thanks to \eqref{eq:7f}, this subelliptic estimate for $\tP_\epsi$ transfers to a subelliptic estimate on $P_\epsi$: it suffices to plug $v = \pi_\Ss^* u$ in \eqref{eq:0p} to obtain 
\begin{equation}\label{eq:0pa}
u \in C^\infty(S^*\Mm) \ \Rightarrow \ |u|_{H^{2/3}_\epsi} \leq C_\epsi(|P_\epsi u| + |u|).
\end{equation}

\subsubsection{Spherical vertical Laplacian as a sum of squares} We will need the following result: there exist $n > 0$ and $X_1, ..., X_n$ smooth vector fields on $S^*\Mm$ such that 
\begin{equation}\label{eq:7e}
\Delta_\Ss = -\sum_{j=1}^n X_j^2, \ \ \ \ \dive(X_j) = 0.
\end{equation}
Indeed, Nash's theorem shows there exist $n > 0$ and an isometric embedding $\iota : \Mm \hookrightarrow \R^n$. The manifold $S^*\Mm$ can be seen as a submanifold of $T^* \R^n$ thanks to the embedding 
\begin{equation*}
\left(z,\zeta^1\right) \mapsto \left(\iota(z), \ (d\iota(z)^*)^{-1} \cdot \zeta^1\right),
\end{equation*}
which in addition preserves the bundle structure. Let $X_1, ..., X_n$ be the orthogonal projections of $\p_{n+1}, ..., \p_{2n}$ on $S^*\Mm$. Following the proof of \cite[Theorem 3.1.4]{Hsu}, the $X_j$'s are divergence-free vector fields hence \eqref{eq:7e} holds.

\subsection{Dynamical systems and microlocal analysis}\label{sec:2.3}

The material here is mostly taken from \cite[\S 2.1]{DZ1} and \cite[Appendix E.5.2]{DZ}.

When $\Mm$ has negative curvature, $H_1$ generates an Anosov flow on $S^*\Mm$: there exists a decomposition of $TS^*\Mm$, invariant under the geodesic flow $e^{tH_1}$, of the form
\begin{equation*}
T_xS^*\Mm = E_0(x) \oplus E_u(x) \oplus E_s(x),
\end{equation*}
where $E_0(x) = \R \cdot H_1(x)$ and $E_u(x), E_s(x)$ satisfy:
\begin{equations*}
v \in E_u(x) \ \Rightarrow \ |de^{tH_1}(x) v| \leq Ce^{ct} |v|, \ \ t < 0, \\
v \in E_s(x) \ \Rightarrow \ |de^{tH_1}(x) v| \leq Ce^{-ct} |v|, \ \ t > 0.
\end{equations*}

For $(x,\xi) \in TS^*\Mm$, let $\sigma_{H_1}(x,\xi) = \lr{\xi,H_1(x)}$ -- a smooth function on $TS^*\Mm$. The Hamiltonian vector field $H_{\sigma_{H_1}}$ of $\sigma_{H_1}$ generates the flow $\exp(tH_{\sigma_{H_1}})$ given by
\begin{equation}\label{eq:9f}
\exp(tH_{\sigma_{H_1}})(x,\xi) = \left(e^{tH_1}(x), ^Tde^{tH_1}(x)^{-1}\xi\right).
\end{equation}
Since $\sigma_{H_1}(x,\xi) = \frac{1}{i}\lr{\xi,H_1(x)}$ is homogeneous of degree $1$ in $\xi$, $\exp(tH_{\sigma_{H_1}})$ extends to a map $\oT^*S^*\Mm \rightarrow \oT^*S^*\Mm$, see \cite[Proposition E.5]{DZ1}. A radial sink (with respect to $H_{\sigma_{H_1}}$) is a $\exp(tH_{\sigma_{H_1}})$-invariant closed conic set $L \subset T^* S^*\Mm \setminus 0$ with a conical neighborhood $U$ satisfying
\begin{equations}\label{Eq:7k}
t \rightarrow +\infty \ \Rightarrow \ \mathrm{d}(\kappa(\exp(tH_{\sigma_{H_1}})(U)), \kappa(L)) \rightarrow 0,  \\
(x,\xi) \in U \ \Rightarrow \ |\pi_\xi\exp(tH_{\sigma_{H_1}})(x,\xi)| \geq C^{-1} e^{c t} |\xi|.
\end{equations}
Here $\pi_\xi(x,\xi) = \xi$. A radial source is defined by reversing the flow direction in \eqref{Eq:7k}.

The decomposition $T_xS^*\Mm = E_u(x) \oplus E_0(x) \oplus E_s(x)$ induces a dual decomposition $T_x^* S^*\Mm = E_s^*(x) \oplus E_0^*(x) \oplus E_u^*(x)$. \textit{Note that in this notation, $E_s^*(x)$ is the dual of $E_u(x)$ and $E_u^*(x)$ is the dual of $E_s^*(x)$.} The stable and unstable foliations of Anosov flows are related to the radial source and sinks as follows: $E_s^* \setminus 0$ is a radial source and $E_u^*\setminus 0$ is a radial sink, see \cite[\S 2.3]{DZ1}.

Pollicott--Ruelle resonances are dynamical quantities associated to $\Mm$, that quantify the decay of classical correlations, see \cite[Corollary 1.2]{Ts} and \cite[Corollary 5]{NZ}. These numbers can also be realized as eigenvalues of $\frac{1}{i} H_1$ on specifically designed Sobolev spaces. They are the poles of the meromorphic continuation of the Fredholm family of operators $(P_0-\lambda)^{-1} = (\frac{1}{i}H_1-\lambda)^{-1} : C^\infty \rightarrow \DD'$, where $\DD'$ is the set of distributions on $S^*\Mm$. The poles of $(P_0-\lambda)^{-1}$ have finite rank; the multiplicity of a pole $\lambda_0 \in \C$ is $\text{rank}(\Pi_{\lambda_0})$, where
\begin{equation}\label{eq:5n}
\Pi_{\lambda_0} \de \dfrac{1}{2\pi i} \oint_{\p \Dd(\lambda_0,r_0)} (P_0-\lambda)^{-1} d\lambda
\end{equation}
and $r_0$ is small enough so that $\lambda_0$ is the unique pole of $(P_0-\lambda)^{-1}$ on $\Dd(\lambda_0,r_0)$. In order to investigate further the residues of $(P_0-\lambda)^{-1}$, we recall that one can associate to each $u \in \DD'$ a conical set $\WF(u)$, called the classical wavefront set, which measures in phase space where $u$ is not smooth. We refer to \cite[\S 7]{GS} for precise definitions. For $\Gamma \subset T^*S^*\Mm$ a conical set, let $\DD'_\Gamma$ be the set of distributions with classical wavefront set contained in $\Gamma$. 

\begin{lem}\label{lem:3} If $\lambda_0$ is a simple Pollicott--Ruelle resonance of $P_0 = \frac{1}{i}H_1$, there exist $u \in \DD'_{E_u^*}, v \in \DD'_{E_s^*}$ and a holomorphic family of operators $A(\lambda)$ defined near $\lambda_0$, with
\begin{equation*}
(P_0 - \lambda)^{-1} = \dfrac{u \otimes v}{\lambda-\lambda_0} + A(\lambda).
\end{equation*}
\end{lem}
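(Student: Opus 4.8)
The plan is to deduce Lemma \ref{lem:3} from the structure of the spectral projector $\Pi_{\lambda_0}$ defined in \eqref{eq:5n}, using the fact that a simple resonance corresponds to a rank-one projector. First I would recall from the Faure--Sj\"ostrand/Dyatlov--Zworski theory that, on the anisotropic Sobolev spaces adapted to the radial source $E_s^* \setminus 0$ and sink $E_u^* \setminus 0$, the operator $P_0 - \lambda$ is Fredholm of index zero for $\Im \lambda$ large, and $(P_0-\lambda)^{-1}$ continues meromorphically with poles of finite rank; near $\lambda_0$ one has the Laurent expansion
\begin{equation*}
(P_0-\lambda)^{-1} = -\sum_{j=1}^{J} \frac{(P_0-\lambda_0)^{j-1}\Pi_{\lambda_0}}{(\lambda-\lambda_0)^{j}} + A(\lambda),
\end{equation*}
with $A(\lambda)$ holomorphic near $\lambda_0$ and $J$ the size of the largest Jordan block. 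Simplicity of $\lambda_0$ means $\mathrm{rank}(\Pi_{\lambda_0}) = 1$; since the single Jordan block then has size $1$, we get $(P_0-\lambda_0)\Pi_{\lambda_0} = 0$ and $J = 1$, so the principal part is just $\Pi_{\lambda_0}/(\lambda-\lambda_0)$.

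Next I would identify $\Pi_{\lambda_0}$ as $u \otimes v$. Because $\Pi_{\lambda_0}$ has rank one, we may write $\Pi_{\lambda_0} w = \lr{w, \bar v} u$ for some $u$ in the range and some $v$ in the range of the adjoint; normalizing so that $\lr{u,\bar v} = 1$ makes it a genuine projector. The nontrivial point is the wavefront set claims. The vector $u$ spans the kernel of $P_0-\lambda_0$ on the anisotropic space; standard radial-point propagation (Melrose's estimate in the form \cite[Propositions 2.6--2.7]{DZ1}) shows that any such element, being a distribution annihilated by $\frac1i H_1 - \lambda_0$, must have wavefront set contained in the radial sink $E_u^*$ — this is exactly the direction in which elements of the anisotropic space are forced to be microlocally smooth in the complementary directions, so $u \in \DD'_{E_u^*}$. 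Dually, $v$ spans the kernel of the transpose operator, which is the generator of the reversed flow; the roles of source and sink interchange, so $v \in \DD'_{E_s^*}$. I would cite \cite[\S 2.3, \S 3]{DZ1} for these wavefront statements rather than reprove them.

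Then I would simply set $A(\lambda)$ equal to the holomorphic remainder from the Laurent expansion above (with $J=1$) and observe that the formula
\begin{equation*}
(P_0-\lambda)^{-1} = \frac{u \otimes v}{\lambda - \lambda_0} + A(\lambda)
\end{equation*}
holds as operators $C^\infty \to \DD'$, since both sides agree as meromorphic families on the anisotropic space and $C^\infty$ embeds continuously in that space while the anisotropic space embeds in $\DD'$. The main obstacle I anticipate is purely expository: one must be careful that the continuation, the projector $\Pi_{\lambda_0}$, and the wavefront bounds on $u$ and $v$ are all \emph{independent} of the particular anisotropic space chosen (they depend only on a large enough weight at the source and a negative enough weight at the sink), so that the statement is intrinsic to $P_0$ and not an artifact of the functional-analytic setup. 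This independence is established in \cite{DZ1}, and invoking it carefully — together with the rank-one reduction forced by simplicity — is the crux of the argument; the rest is bookkeeping.
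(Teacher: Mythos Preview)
Your proposal is correct and follows essentially the same route as the paper: Laurent expansion of $(P_0-\lambda)^{-1}$ near $\lambda_0$, nilpotency of $(P_0-\lambda_0)$ on the one-dimensional range of $\Pi_{\lambda_0}$ forcing $J=1$, and identification of $\Pi_{\lambda_0}=u\otimes v$ with the wavefront bounds coming from \cite{DZ1}. The only difference is cosmetic: the paper invokes \cite[Proposition~3.3]{DZ1} directly for both the rank-one form and the inclusions $\WF(u)\subset E_u^*$, $\WF(v)\subset E_s^*$, whereas you sketch the radial-point mechanism behind that proposition; also watch the sign in your Laurent expansion, which is inconsistent with your final displayed formula.
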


\begin{proof} According to \cite[Proposition 3.3]{DZ1}, the operator $\Pi_{\lambda_0}$ defined in \eqref{eq:5n} is equal to $u \otimes v$, where $\WF(u) \subset E_u^*$, $\WF(v) \subset E_s^*$; and there exist $J > 0$ and a family of operators $A(\lambda) : C^\infty \rightarrow \DD'$ holomorphic near $\lambda_0$ such that
\begin{equation}\label{eq:0k}
(P_0 - \lambda)^{-1} = A(\lambda) + \sum_{j=1}^J \dfrac{(P_0-\lambda_0)^{j-1} \Pi_{\lambda_0}}{(\lambda-\lambda_0)^j}.
\end{equation}
By the same argument as in the proof of \cite[Theorem 2.4]{DZ} the operator $P_0 - \lambda_0$ maps $\Range(\Pi_{\lambda_0})$ to itself and $(P_0 - \lambda_0)|_{\Range(\Pi_{\lambda_0})}$ is nilpotent. Since $\Range(\Pi_{\lambda_0})$ has dimension $1$, $(P_0-\lambda)|_{\Range(\Pi_{\lambda_0})}$ is equal to $0$ and the index $J$ in \eqref{eq:0k} is equal to $1$.\end{proof}

In \cite{DZ1} the meromorphic continuation of $(P_0-\lambda)^{-1}$ is realized via analytic Fredholm theory. Therefore,  Pollicott--Ruelle resonances of $P_0$ are identified with the roots of a suitable Fredholm determinant, see \cite[Proposition 3.2]{DZ2}.

\subsection{Semiclassical analysis}\label{subsec:2.2} We recall some facts about the semiclassical calculus on $S^*\Mm$ (or $O^*\Mm$).  Unless specified otherwise, our basic reference is \cite[Appendix E]{DZ}. \textit{In the rest of the paper, $h$ is a parameter satisfying $0 < h < 1$.}

For $m \in \R$, a function $a \in C^\infty(T^*S^*\Mm)$ depending on $h$ lies in the symbol~class~$S^m$~if
\begin{equation*}
\forall \az, \beta, \  \exists C_{\az \beta} >0, \  \forall 0 < h < 1, \ \sup_{(x,\xi) \in T^* S^*\Mm} \lr{\xi}^{m-|\beta|} |\p_x^\az \p_\xi^\beta a(x,\xi)| \leq C_{\az \beta}.
\end{equation*}
Semiclassical pseudodifferential operators on $S^*\Mm$ are $h$-quantization of symbols in $S^m$ and form an algebra denoted $\Psi_h^m$, see \cite[Appendix E.1]{DZ}. Conversely, to each $A \in \Psi^m_h$, we can associate a principal symbol $\sigma(A) \in S^m/hS^{m-1}$. For example if $X$ is a smooth vector field on $S^*\Mm$, the principal symbols of $\frac{h}{i}X$ is 
 \begin{equation}\label{EQ:0a}
\sigma_X(x,\xi) \de \lr{\xi,X(x)} \mod hS^0.
 \end{equation}
The principal symbol of $h^2\Delta = h^2\Delta_\Ss + h^2 \Delta_O$ induces a positive definite quadratic form on the fibers of $T^*S^*\Mm$, thus a metric on $S^*\Mm$. We denote it by $g$, so that the principal symbol of $h^2 \Delta$ is $|\xi|^2_g$ modulo $hS^1$. We refer to \cite[Appendix E.1]{DZ} for additional properties of operators in $\Psi^m_h$.

Let $\oT^*S^*\Mm$ be the radial compactification of $T^*S^*\Mm$: it is a compact manifold with interior $T^*S^*\Mm$ and boundary $S^*S^*\Mm$ associated with a map $\kappa : T^*S^*\Mm \setminus 0 \rightarrow \p \oT^*S^*\Mm$, see \cite[Appendix E.1]{DZ}. To each operator $A \in \Psi^m_h$, we associate an invariant closed set $\WF_h(A) \subset \oT^* S^*\Mm$ called the wavefront set of $A$, which measures where $A$ is not semiclassically negligible. We also associate to $A$ an invariant open set $\Ell_h(A) \subset \oT^*S^*\Mm$ called the elliptic set, which measures where $A$ is semiclassically invertible. See \cite[Appendix E.2]{DZ} for precise definitions. The main interest of the elliptic set is the elliptic estimate \cite[Proposition 2.4]{DZ1}. Among classical results we record the sharp G$\mathring{\text{a}}$rding inequality \cite[Proposition E.34]{DZ} and the Duistermaat--H\"ormander propagation of singularities theorem~\cite[Proposition~2.5]{DZ1}. 

A less classical result needed here is the radial source (resp. radial sink) estimate, first introduced by Melrose 
\cite{Me} and developed further in \cite[Propositions 2.6-2.7]{DZ1}. This estimate applies microlocally near a radial source (resp. near a radial sink); it enables us to control certain semiclassical quantities provided that the regularity index is high (resp. low) enough. This motivates the definition of semiclassical anisotropic Sobolev spaces that have high microlocal regularity near radial sources and low microlocal regularity near radial sinks. See \cite[Chapter 8]{Z} for a general theory of semiclassical anisotropic Sobolev spaces and \cite[\S 3.1]{DZ1} for the specific scale of Sobolev space we will use in this paper. 

We can also consider operators on $\R^n, n > 0$, that belong to a more general class than $\Psi_h^0$. These are realized as quantization of symbols $a$ satisfying
\begin{equation*}
\forall \az, \beta, \  \exists C_{\az \beta} >0, \forall 0 < h < 1 , \ \sup_{(x,\xi) \in T^* \R^n} |\p_x^\az \p_\xi^\beta a(x,\xi)| \leq C_{\az \beta}.
\end{equation*}
The space of resulting symbols (resp. resulting operators) is denoted by $S$ (resp. $\Psi_h$). This space is \textit{not} invariant under change of variables.  In the class $\Psi_h$, the remainders  in the composition formula are smaller than the leading part, but they are not mor smoothing -- in contrast with $\Psi_h^0$. We will use this class exclusively in \S \ref{subsec:3.3}. Our basic reference for such operators is \cite[Chapter 4]{Z}. 

\section{Maximal hypoelliptic estimates}\label{sec:3}

\subsection{Statement of the result}

Recall that the operator $P_\epsi$ is given by $\frac{1}{i}(H_1 + \epsi \Delta_\Ss)$, that the semiclassical Sobolev spaces $H^s_\epsi$ were defined in \S\ref{subsec:1}, and that there exist $X_1, ..., X_n \in T S^*\Mm$ such that such that $\Delta_\Ss = - \sum_{j=1}^n X_j^2$.  Here we prove an estimate for $P_\epsi$ similar to \cite[Theorem 18]{RS}, but uniform in the \textit{semiclassical regime} $\epsi \rightarrow 0$. Let $\rho_1, \rho_2$ be two smooth functions satisfying
\begin{equation}\label{eq:0g}
\supp(\rho_1, \rho_2) \subset \R \setminus 0, \ \ \ \ 1-\rho_1, \ 1-\rho_2 \in C^\infty_0(\R,[0,1]), \ \ \ \ \rho_2 = 1 \text{ on } \supp(\rho_1).
\end{equation}

\begin{theorem}\label{thm:1} Let $R > 0$ and $\rho_1, \rho_2$ two functions satisfying \eqref{eq:0g}. For any $N > 0$, there exists  $C_{N,R} > 0$ such that for every $|\lambda| \leq R$, $u \in C^\infty(S^*\Mm)$, and $0 < \epsi < 1$,
\begin{equations}\label{eq:5k}
\epsi^{2/3}|\rho_1(\epsi^2 \Delta) u|_{H^{2/3}_\epsi} + \epsi^{1/3} \sum_{j=1}^n| \epsi X_j \rho_1(\epsi ^2 \Delta) u|_{H^{1/3}_\epsi } + |\rho_1(\epsi ^2 \Delta) \epsi ^2 \Delta_\Ss u| \\ \leq C_{N,R} |\rho_2(\epsi ^2\Delta) \epsi (P_\epsi -\lambda)u| + O(\epsi ^N)|u|.
\end{equations}
\end{theorem}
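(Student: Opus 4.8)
The plan is to lift the estimate to the orthonormal coframe bundle $O^*\Mm$, where the relevant vector fields $V_{k\ell}$ and $\tH_1$ and their brackets $\tH_k = [\tH_1, V_{1k}]$ span $TO^*\Mm$ with only one commutator needed, and then descend back to $S^*\Mm$ using the intertwining $\tP_\epsi \pi_\Ss^* = \pi_\Ss^* P_\epsi$ and the Sobolev isometry \eqref{eq:7f}. So the first step is to reformulate \eqref{eq:5k} as an estimate for $\tP_\epsi = \frac{1}{i}(\tH_1 + \epsi \Delta_O^V)$ applied to $v = \pi_\Ss^* u$, with $\Delta_O^V = -\sum_{i,j} V_{ij}^2$ playing the role of $-\sum_j X_j^2$; the cutoffs $\rho_1(\epsi^2\Delta), \rho_2(\epsi^2\Delta)$ lift to functions of $\Delta_O$ by \eqref{eq:1b}, and by \eqref{eq:1c} these commute with $\Delta_O^V$ and $\Delta_O^H$, which is what lets us move the cutoffs around freely. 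Since $\rho_1$ is supported away from $0$ and $1-\rho_1$ is compactly supported, the operator $\rho_1(\epsi^2\Delta_O)$ microlocalizes to frequencies $|\xi|_g \gtrsim \epsi^{-1}$, i.e.\ to the semiclassical regime $h = \epsi$ with $|\xi| \gtrsim 1$; this is precisely where hypoellipticity with a uniform constant should hold.

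The second and main step is the positive commutator argument, replacing Lebeau's microlocal reduction to the model operator $x_1^2 D_{x'}^2 + D_{x_1}$. Writing $h = \epsi$, the semiclassical symbol of $h\tP_\epsi$ is $\sigma_{\tH_1}(x,\xi) + ih \sum_{i,j} \sigma_{V_{ij}}(x,\xi)^2$ modulo lower order; the vertical part is a nonnegative quadratic form in the vertical covariables which degenerates exactly on the horizontal covariables, and there $\sigma_{\tH_1}$ together with the brackets \eqref{eq:1e} (which feed the vertical directions $\tH_k$ back into the span) provide the missing ellipticity after one commutation. Concretely I would test $\epsi(\tP_\epsi - \lambda)v$ against a cleverly weighted version of $v$ — roughly $v$ times suitable powers of $\epsi^2\Delta$ and of the vertical Laplacian — integrate by parts using the commutation identities \eqref{eq:1d}, \eqref{eq:1e}, \eqref{eq:1c}, and extract the three terms on the left of \eqref{eq:5k}: the gain of $\epsi^2\Delta_\Ss u$ (full vertical regularity) comes directly from $\Re\langle i\epsi\tP_\epsi v, \epsi\Delta_O^V v\rangle$ type pairings, while the fractional gains $\epsi^{2/3}|\cdot|_{H^{2/3}_\epsi}$ and $\epsi^{1/3}\sum_j |\epsi X_j \cdot|_{H^{1/3}_\epsi}$ in the horizontal/mixed directions are the subelliptic $2/3$-gain, recovered by interpolating the one-commutator estimate — here the weights and the sharp G\aa rding inequality must be balanced so that all error terms are either absorbed into the left side or are $O(\epsi^N)|u|$ (the latter coming from the region $|\xi| \lesssim 1$ killed by $\rho_1$). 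The spectral parameter $\lambda$ with $|\lambda| \le R$ is harmless: it contributes $O(\epsi)|v|$ after multiplication by $\epsi$, absorbable for $\epsi$ small.

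The hardest part will be making the positive commutator argument yield the \emph{sharp} exponents $\epsi^{2/3}$, $\epsi^{1/3}$ uniformly in $\epsi$, rather than a crude estimate with worse powers. The issue is that $\tH_1$ and the degeneracy locus of $\Delta_O^V$ interact anisotropically — the commutator $\tH_k = [\tH_1, V_{1k}]$ restores ellipticity only at "first order," so one genuinely needs a Rothschild--Stein / Lebeau style $2/3$ scaling, implemented here by choosing the test function's weight to be a fractional power of a carefully chosen elliptic-plus-degenerate comparison operator and tracking how $h=\epsi$ enters each commutator. Once the clean estimate \eqref{eq:0p}-type bound is obtained on $O^*\Mm$ with the correct $\epsi$-powers, descending to \eqref{eq:5k} on $S^*\Mm$ is immediate via \eqref{eq:7f} and $\pi_\Ss^* \Delta_\Ss = \Delta_O^V \pi_\Ss^*$, $\pi_\Ss^* X_j$ replaced by the lifted fields; a density argument extends from $v = \pi_\Ss^* u$ back to the statement for $u \in C^\infty(S^*\Mm)$.
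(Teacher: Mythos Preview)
Your overall architecture is right and matches the paper: lift to $O^*\Mm$ via $\pi_\Ss^*$ so that $\Delta_O^V = -\sum V_{ij}^2$ with $[V_{ij},\Delta_O^V]=0$, run a positive commutator argument exploiting \eqref{eq:1e}, then descend by \eqref{eq:7f}. But the proposal has two genuine gaps at exactly the point you yourself flag as ``the hardest part.''

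First, you are trying to get all three terms on the left of \eqref{eq:5k} in one blow, by testing against ``a cleverly weighted version of $v$'' involving ``a fractional power of a carefully chosen elliptic-plus-degenerate comparison operator.'' The paper does not do this, and I do not see how to make it work directly. Instead the paper proves a preliminary reduction (Lemma~\ref{lem:1a}): for any operator of the form $\PP = \sum \SSS_j^2 + i\TT$ with $\SSS_j,\TT$ selfadjoint in $\Psi_h^1$, one has $|\TT u| + |\sum \SSS_j^2 u| + h^{1/3}\sum|\SSS_j u|_{H_h^{1/3}} \le C|\PP u| + O(h^{2/3})|u|_{H_h^{2/3}}$. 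This is elementary (commutator bookkeeping plus $ab\le a^2+b^2$) and it reduces the whole theorem to proving the \emph{single} bound $h^{2/3}|\rho_1(h^2\Delta_O)v|_{\tH_h^{2/3}} \le C|\rho_2(h^2\Delta_O)\tP v| + O(h^\infty)|v|$. You should separate this step out; without it you are fighting three estimates at once.

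Second, and more seriously, your description of the positive commutator step is a placeholder. The actual mechanism that produces the sharp $\epsi^{2/3}$ is a \emph{second microlocalization at an anisotropic scale}. The paper covers $\oT^*O^*\Mm\setminus 0$ by three microlocal regions: where $h^2\Delta_O^V$ is elliptic (trivial), where $h\tH_1$ is elliptic (sharp G\aa rding on $\sigma_{\tH_1}^2 - 2\sum\sigma_{V_{1\ell}}\sigma_{\tH_\ell}$), and the genuinely degenerate region where neither is elliptic but some $\tH_m = [V_{1m},\tH_1]$ is. In that last region one straightens $V_{1m}$ to $\partial_\theta$, takes a Littlewood--Paley decomposition $\AAA_j = \chi(2^{-j}hD)\AAA$, and then on each dyadic shell introduces a further cutoff $\Phi(h_j D_\theta)$ at the scale $h_j = h^{2/3}2^{-j/3}$, with $\phi^2 = (\Phi^2)'$ chosen so that the commutator $[h\HH_1,\Phi(h_jD_\theta)^2]$ has symbol $\sim h_j \phi(\xi_\theta)^2 \{\xi_\theta,\sigma_{\HH_1}\}$, which is $\ge c\, h^{2/3}2^{-j/3}\cdot 2^j$ on the shell by ellipticity of $\tH_m$. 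Sharp G\aa rding then gives $h^{2/3}2^{2j/3}|\phi(h_jD_\theta)\AAA_j w|^2 \le C|\PP\AAA_j w||\AAA_j w|$; the complementary piece $(1-\phi)(h_jD_\theta)\AAA_j w$ is controlled by $|hD_\theta \AAA_j w| \le |\PP\AAA_j w|^{1/2}|\AAA_j w|^{1/2}$ via the spectral theorem. Summing in $j$ gives the $H_h^{2/3}$ bound. The specific choice $h_j = h^{2/3}2^{-j/3}$ is what balances the two pieces and is the reason the exponent is exactly $2/3$; a single global positive commutator with ``fractional weights'' will not see this scale and will not give the uniform constant.
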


This Theorem  applies to any smooth compact Riemannian manifold $\Mm$, \textit{with no restriction on the sign of its sectional curvatures, and with no change in the proof.}

The paper \cite{RS} shows that for every $\epsi  > 0$ there exists $C_\epsi  > 0$ such that
\begin{equation*}
|\rho_1(\epsi^2\Delta) u|_{H^{2/3}_\epsi } \leq C_\epsi (|\rho_2(\epsi^2\Delta)\epsi  (P_\epsi -\lambda)u| + |u|).
\end{equation*}
Theorem \ref{thm:1} shows that $C_\epsi  = O(\epsi ^{-2/3})$. Because of related estimates in \cite{DSZ} and \cite[\S 3]{L} we believe that this upper bound is optimal. This is the subject of a work in progress of Smith \cite{Sm}. 

We proved Theorem \ref{thm:1} in \cite{D2}, when $\Mm$ is an orientable surface. In this case, $\Delta_\Ss = -V^2$ where $V\in TS^*\Mm$ generates the circle action on the fibers of $S^*\Mm$. Thus, $\Delta_\Ss$ is a sum of squares of vector fields that commute with $\Delta_\Ss$, a fact used in a crucial manner in the proof of \cite[Proposition 3.1]{D2}. This no longer holds when $d \geq 3$ or $\Mm$ is not orientable.  In order to apply nevertheless the main idea of \cite{D2} we observe that $\Delta_O^V$ -- the lift of $\Delta_\Ss$ to the orthonormal coframe bundle $O^*\Mm$ -- is the sum of squares of vector fields which all commute with $\Delta_O^V$:
\begin{equation}\label{eq:0v}
\Delta_O^V = -\sum_{i,j} V_{ij}^2, \ \ \ \ [\Delta_O^V, V_{ij}] = 0,
\end{equation}
see \S\ref{sec:2.2.3}-\ref{sec:2.2.4}. The operator $P_\epsi = \frac{1}{i}(H_1+\epsi \Delta_\Ss)$ on $C^\infty(S^*\Mm)$ lifts to  $\tP_\epsi = \frac{1}{i}(\tH_1+\epsi \Delta_O^V)$ on $C^\infty(O^*\Mm)$. Because of \eqref{eq:0v}, we can modify the techniques of \cite{D2} to apply them to the operator $\tP_\epsi$. This will yield estimates for functions on $O^*\Mm$, which we will descend to function on $S^*\Mm$.

We will use semiclassical analysis to show Theorem \ref{thm:1}. To conform with standard notations, we define 
\begin{equation*}
h \de \epsi, \ \ \ \ P \de ihP_h = h^2 \Delta_\Ss + hH_1, \ \ \ \ \tP \de ih\tP_h = h^2 \Delta_O^V+h\tH_1,
\end{equation*}
\textit{for use in \S\ref{sec:3.2}-\ref{subsec:3.3} only}. We see $h$ as a small parameter and $P$ as a $h$-semiclassical operator in $\Psi_h^2$. As in \cite{D2}, we base our investigation on ideas of Lebeau \cite{L}, where a subelliptic estimate for the Bismutian is shown, for $\epsi = 1$.  The strategy starts to differ when Lebeau uses a microlocal reduction to a toy model. Instead, we continue to work with $P_\epsi$ and we replace the microlocal reduction by a positive commutator estimate. This avoids to use semiclassical Fourier integral operators.

\subsection{Reduction to a subelliptic estimate}\label{sec:3.2} The first lemma shows that Theorem \ref{thm:1} is a consequence of a subelliptic estimate.

\begin{lem}\label{lem:1a} Let $\SSS_1, ..., \SSS_q, \TT \subset \Psi_h^1$ be a collection of selfadjoint semiclassical operators on $S^*\Mm$ or $O^*\Mm$ and $\PP \de \sum_{j=1}^q \SSS_j^2 + i\TT$. There exist $C, h_0>0$ such that 
\begin{equation*}
0 < h < h_0 \ \Rightarrow \ |\TT u| +  \left|\sum_{j=1}^q  \SSS_j^2 u \right| + \sum_{j=1}^q h^{1/3} |\SSS_j u|_{H_h^{1/3}} \leq C |\PP u| + O(h^{2/3})|u|_{H^{2/3}_h}.
\end{equation*}
\end{lem}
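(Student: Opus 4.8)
The plan is to prove the estimate in two parts: first control the imaginary part $\TT u$ and the subelliptic gain $h^{1/3}|\SSS_j u|_{H^{1/3}_h}$ by a direct energy (integration by parts) argument, and then use the commutator structure to bootstrap control of the ``second-order'' term $\left|\sum_j \SSS_j^2 u\right|$. Write $\PP = \SSS + i\TT$ with $\SSS := \sum_{j=1}^q \SSS_j^2 \geq 0$ (modulo $h\Psi_h^1$, by the sharp G\r{a}rding inequality). Pairing $\PP u$ with $u$ and taking real and imaginary parts gives
\begin{equation*}
\Re\lr{\SSS u, u} = \Re \lr{\PP u, u}, \qquad \lr{\TT u, u} = \Im \lr{\PP u, u}.
\end{equation*}
The first identity, together with $\sum_j |\SSS_j u|^2 = \lr{\SSS u, u} + (\text{error in } h\Psi^1_h)$, yields $\sum_j |\SSS_j u|^2 \leq C|\PP u|\,|u| + O(h)|u|_{H^{1/2}_h}^2$ (the error term handled by sharp G\r{a}rding and interpolation), hence $|\SSS_j u| \leq C(|\PP u| + |u|_{H^{1/2}_h})$. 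Since $\SSS_j \in \Psi_h^1$, elliptic-type microlocal estimates upgrade $|\SSS_j u|$ to $h^{1/3}|\SSS_j u|_{H^{1/3}_h}$ at the cost of an $O(h^{2/3})|u|_{H^{2/3}_h}$ remainder, which is of the allowed form. Similarly, $|\TT u|$ is bounded using that $\TT = \frac{1}{2i}(\PP - \PP^*)$: writing $|\TT u|^2 = \lr{\TT^2 u, u}$ and expanding $\TT^2 = -\frac14(\PP - \PP^*)^2$, the cross terms $\PP^*\PP$ and $\PP\PP^*$ are controlled by $|\PP u|^2$ after commuting (the commutator $[\PP,\PP^*] \in h\Psi_h^1$), again leaving an $O(h^{2/3})|u|_{H^{2/3}_h}$ error.

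The heart of the argument is controlling $\left|\SSS u\right| = \left|\sum_j \SSS_j^2 u\right|$, since a priori this is a full two-derivative quantity and $\PP$ only gives it to us ``weakly'' (paired against $u$). The key is to exploit that $\SSS = \PP - i\TT$, so
\begin{equation*}
|\SSS u|^2 = \lr{\SSS u, \SSS u} = \lr{\SSS u, \PP u} - i\lr{\SSS u, \TT u} = \Re\lr{\SSS u, \PP u} + \Re\lr{i[\TT,\SSS]u, u}/1 + \dots,
\end{equation*}
more precisely one writes $\lr{\SSS u, \SSS u} = \lr{\SSS u, \PP u} - \overline{\lr{\TT u, i \SSS u}}$ and moves $\SSS$ onto $\TT u$ to produce a commutator $[\SSS, \TT]$. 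Now $[\SSS, \TT] = \sum_j [\SSS_j^2, \TT] = \sum_j \SSS_j[\SSS_j,\TT] + [\SSS_j,\TT]\SSS_j$, and since $\SSS_j, \TT \in \Psi_h^1$ we have $[\SSS_j, \TT] \in h\Psi_h^1$; thus each term is $h$ times (a first-order operator) times $\SSS_j$, so pairing against $u$ gives $O(h)|\SSS_j u|_{H^1_h}|u| \lesssim O(h)|\SSS_j u|^{1/2}|\SSS_j u|_{H^2_h}^{1/2}|u|$ type bounds — crucially this is $o(1)$ relative to $|\SSS u|$ after Cauchy--Schwarz, so it can be absorbed. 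Combining: $|\SSS u|^2 \leq |\SSS u|\,|\PP u| + (\text{absorbable}) + O(h^{2/3})^2 |u|^2_{H^{2/3}_h}$, whence $|\SSS u| \leq C|\PP u| + O(h^{2/3})|u|_{H^{2/3}_h}$, and feeding this back closes the estimate for the remaining terms.

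The main obstacle I anticipate is the bookkeeping of the remainder terms at exactly the order $h^{2/3}|u|_{H^{2/3}_h}$: several of the commutator errors naively look like $O(h)|u|_{H^1_h}$ or $O(h)|\SSS_j u|_{H^1_h}|u|$, and one must interpolate carefully — e.g. $|u|_{H^1_h} \leq |u|_{H^{2/3}_h}^{?}|u|^{?}$ is false as stated since $1 > 2/3$, so in fact these terms must be arranged so that the ``bad'' factor always lands on a quantity already controlled (like $\SSS_j u$ or $\TT u$) rather than on a bare high Sobolev norm of $u$. Getting every error to collapse into either an absorbable $\delta|\SSS u|^2 + \delta \sum_j h^{2/3}|\SSS_j u|^2_{H^{1/3}_h} + \delta|\TT u|^2$ piece or into the clean $O(h^{2/3})|u|_{H^{2/3}_h}$ piece is the delicate part; everything else is a standard $L^2$ energy argument combined with sharp G\r{a}rding and the $\Psi_h$-calculus.
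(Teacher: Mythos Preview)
Your overall architecture is correct and matches the paper's: the Pythagorean identity
\[
|\PP u|^2 = \Bigl|\sum_j \SSS_j^2 u\Bigr|^2 + |\TT u|^2 + \sum_j \lr{[\SSS_j^2, i\TT]u, u}
\]
together with control of the commutator terms gives the $|\TT u|$ and $|\sum_j \SSS_j^2 u|$ bounds simultaneously, and the commutator terms are handled by writing $[\SSS_j^2, i\TT] = \SSS_j[\SSS_j, i\TT] + [\SSS_j, i\TT]\SSS_j$ and pairing $\SSS_j u$ in $H^{1/3}_h$ against $[\SSS_j, i\TT]u \in h\Psi_h^1 \cdot u$ in $H^{-1/3}_h$. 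That is essentially what the paper does.

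The genuine gap is your ``upgrade'' step: you obtain $|\SSS_j u|^2 \leq |\PP u|\,|u|$ and then claim that ``elliptic-type microlocal estimates upgrade $|\SSS_j u|$ to $h^{1/3}|\SSS_j u|_{H^{1/3}_h}$.'' There is no such elliptic estimate available here --- the $\SSS_j$ are exactly the characteristic (subelliptic) directions, so nothing microlocal will give you regularity for free. What is actually needed, and what the paper does, is to run the pairing argument again with the weight $\Lambda_{1/3} = (\Id + h^2\Delta)^{1/6}$: one writes $|\SSS_j u|_{H^{1/3}_h}^2 \leq 2|\SSS_j \Lambda_{1/3} u|^2 + O(h^2)|u|_{H^{1/3}_h}^2$, bounds $|\SSS_j \Lambda_{1/3} u|^2 \leq \Re\lr{\PP \Lambda_{1/3} u, \Lambda_{1/3} u}$, and then expands $\Re\lr{\PP \Lambda_{1/3} u, \Lambda_{1/3} u} = \Re\lr{\PP u, \Lambda_{2/3} u} + \Re\lr{[\PP, \Lambda_{1/3}]u, \Lambda_{1/3} u}$. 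The commutator $[\PP, \Lambda_{1/3}]$ again requires the sum-of-squares structure: its $\SSS_j^2$ part produces terms $\SSS_j[\SSS_j, \Lambda_{1/3}] + [\SSS_j, \Lambda_{1/3}]\SSS_j$ that are controlled by $|\SSS_j u| \leq |\PP u|^{1/2}|u|^{1/2}$ (the $L^2$ bound you already have), closing to
\[
|\SSS_j u|_{H^{1/3}_h}^2 \leq C|\PP u|\,|u|_{H^{2/3}_h} + O(h)|u|_{H^{2/3}_h}^2.
\]
This is the estimate that feeds into everything else; note in particular that your control of the commutator $\lr{[\SSS_j^2, i\TT]u, u}$ in the Pythagorean step \emph{requires} $|\SSS_j u|_{H^{1/3}_h}$, not just $|\SSS_j u|$, so the order of your argument must be reversed: $H^{1/3}$ bound first, then $|\TT u|$ and $|\sum_j \SSS_j^2 u|$.

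A smaller issue: your claim that $[\PP, \PP^*] \in h\Psi_h^1$ is false as stated ($\PP \in \Psi_h^2$ gives only $h\Psi_h^3$), and your $\TT^2 = -\tfrac14(\PP - \PP^*)^2$ expansion is circular since $\PP - \PP^* = 2i\TT$ tautologically. What actually makes the commutator small enough is precisely the $\sum_j \SSS_j^2$ structure, used as above. Once you fix the $H^{1/3}$ step and the ordering, the bookkeeping you worry about (getting every remainder down to $O(h^{2/3})|u|_{H^{2/3}_h}$) goes through with one application of Young's inequality $ab \leq a^4 + b^{4/3}$ at the end; the paper carries this out explicitly.
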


\begin{proof} We prove the result only in the case of $S^*\Mm$; the proof is identical when considering operators on  $O^*\Mm$. We first show the estimate 
\begin{equation}\label{eq:1l} 
|\SSS_j u|_{H^{1/3}_h}^2 \leq C|\PP u| |u|_{H^{2/3}_h} + O(h) |u|_{H_h^{2/3}}^2.
\end{equation} 
Recall that $\Delta \de \Delta_\Mm + \Delta_\Ss$, where $\Delta_\Mm$ is the \textit{non-negative} standard Laplacian on $\Mm$ (lifted to $S^*\Mm$) and $\Delta_\Ss$ is the spherical Laplacian on $S^*\Mm$. The $H^s_h$-norm was defined in \S \ref{subsec:1} by $|u|_{H^s_h} \de |\Lambda_s u|$, where $\Lambda_s = (\Id+h^2 \Delta)^{s/2}$. Thus,
\begin{equation}\label{eq:7a} 
|\SSS_j u|_{H^{1/3}_h}^2 = |\Lambda_{1/3} \SSS_j u|^2 \leq 2|\SSS_j \Lambda_{1/3} u|^2 + 2|[\Lambda_{1/3},\SSS_j]u|^2 \leq 2|\SSS_j \Lambda_{1/3} u|^2 + O(h^2)|u|_{H^{1/3}_h}^2
\end{equation} 
because $[\Lambda_{1/3},\SSS_j] \in h \Psi^{1/3}_h$. Next we study $|\SSS_j \Lambda_{1/3} u|$: using $\sum_{j=1}^q \SSS_j^2 = \Re(\PP)$,
\begin{equations}\label{eq:7b} 
|\SSS_j \Lambda_{1/3} u|^2 = \lr{\SSS_j^2 \Lambda_{1/3} u, \Lambda_{1/3} u} \leq \lr{\sum_{j=1}^q \SSS_j^2 \Lambda_{1/3} u, \Lambda_{1/3} u} \\
\leq \Re(\lr{\PP \Lambda_{1/3} u, \Lambda_{1/3} u}) = \Re(\lr{\PP u, \Lambda_{2/3} u}) + \Re(\lr{[\PP,\Lambda_{1/3}]u,\Lambda_{1/3} u}).
\end{equations} 
We can estimate $\lr{\PP  u, \Lambda_{2/3} u}$ by $|\PP u| |u|_{H^{2/3}_h}$. The identity $\PP =\sum_{j=1}^q \SSS_j^2+i\TT$ yields
\begin{equation*} \Re(\lr{[\PP ,\Lambda_{1/3}]u,\Lambda_{1/3} u}) = \sum_{j=1}^q  \Re(\lr{[\SSS_j^2,\Lambda_{1/3}]u,\Lambda_{1/3} u}) + \Re(\lr{[i\TT, \Lambda_{1/3}] u, \Lambda_{1/3} u}). 
\end{equation*}  
The operator $[i\TT, \Lambda_{1/3}]$ belongs to $h\Psi^{1/3}_h$ therefore $|\lr{[i\TT, \Lambda_{1/3}] u, \Lambda_{1/3} u}| = O(h)|u|_{H^{1/3}_h}^2$. Using the relation $[\SSS_j^2,\Lambda_{1/3}] = \SSS_j [\SSS_j,\Lambda_{1/3}] + [\SSS_j,\Lambda_{1/3}] \SSS_j$ and the fact that $[\SSS_j,\Lambda_{1/3}]$ is anti-selfadjoint we obtain 
\begin{equations*} 
\lr{[\SSS_j^2,\Lambda_{1/3}]u,\Lambda_{1/3} u} = -\lr{ \SSS_j u,[\SSS_j,\Lambda_{1/3}]\Lambda_{1/3} u} +  \lr{[\SSS_j,\Lambda_{1/3}]u,\SSS_j\Lambda_{1/3} u} 
\\  = -\lr{ \SSS_j u,[\SSS_j,\Lambda_{1/3}]\Lambda_{1/3} u} + \lr{\Lambda_{1/3}[\SSS_j,\Lambda_{1/3}]u, \SSS_ju} + \lr{[\SSS_j,\Lambda_{1/3}] u, [\SSS_j,\Lambda_{1/3}] u}. 
\end{equations*} 
The operators $\Lambda_{1/3}[\SSS_j,\Lambda_{1/3}]$ and $[\SSS_j,\Lambda_{1/3}]$ belong to $h\Psi^{2/3}_h$ and $h\Psi^{1/3}_h$, respectively. Moreover $\SSS_j^2 \leq \Re(\PP)$, hence $|\SSS_ju| \leq |\PP u|^{1/2} |u|^{1/2}$. It follows that 
\begin{equations*} 
|\lr{[\SSS_j^2,\Lambda_{1/3}]u,\Lambda_{1/3} u}| \leq |\SSS_j u| |\Lambda_{1/3}[\SSS_j,\Lambda_{1/3}] u| + |[\SSS_j,\Lambda_{1/3}] u|^2 \\ \leq O(h)|\PP u|^{1/2} |u|^{1/2} |u|_{H^{2/3}_h} + O(h^2)|u|_{H^{1/3}_h}^2.
\end{equations*} 
Gluing this estimate with \eqref{eq:7a}, \eqref{eq:7b}, we get the bound  
\begin{equations*} 
|\SSS_ju|_{H_h^{1/3}}^2 \leq C|\PP u| |u|_{H^{2/3}_h} + O(h) |u|_{H_h^{1/3}}^2 + O(h) |\PP u|^{1/2} |u|^{1/2} |u|_{H^{2/3}_h} 
\\ \leq C|\PP u| |u|_{H^{2/3}_h} + O(h) |u|_{H_h^{2/3}}^2 + O(h) |\PP u|^{1/2} |u|_{H^{2/3}_h}^{3/2} \leq C|\PP u| |u|_{H^{2/3}_h} +O(h) |u|_{H_h^{2/3}}^2.
\end{equations*} 
In the last inequality we used $a b \leq a^2 + b^2$ with $a = |\PP u|^{1/2} |u|_{H^{2/3}_h}^{1/2}$ and $b = h |u|_{H^{2/3}_h}$. This proves \eqref{eq:1l}. We observe that \eqref{eq:1l} gives the estimate on $|\SSS_j u|_{H^{1/3}_h}$ provided by the lemma:
\begin{equation}\label{eq:7c}
h^{2/3} |\SSS_j u|^2 \leq C h^{2/3} |\PP u| |u|_{H_h^{2/3}} + O(h^{5/3})|u|_{H^{2/3}_h}^2 \leq C|Pu|^2 + O(h^{4/3})|u|_{H^{2/3}_h}^2.
\end{equation}

Next we observe that 
\begin{equation*} |\PP u|^2 = \left|\sum_{j=1}^q \SSS_j^2 u\right|^2+|\TT u|^2 + \sum_{j=1}^q \lr{ [\SSS_j^2,i\TT]u,u}. 
\end{equation*} 
To conclude the proof of the lemma it suffices to control the commutator terms $\lr{[\SSS_j^2,i\TT]u,u}$. We have 
\begin{equation*} 
\lr{[\SSS_j^2,i\TT]u,u} = \lr{[\SSS_j,i\TT]u,\SSS_ju}+ \lr{\SSS_ju,[\SSS_j,i\TT]u} = 2 \Re(\lr{\SSS_ju,[\SSS_j,i\TT]u}). 
\end{equation*}
By interpolation, $|\lr{[\SSS_j^2,i\TT]u,u}| \leq |\SSS_j u|_{H^{1/3}_h} |[\SSS_j,i\TT]u|_{H^{-1/3}_h}$. Since $[\SSS_j,i\TT] \in h\Psi^1_h$ it is bounded from $H^{2/3}_h$ to $H^{-1/3}_h$ with norm $O(h)$. By \eqref{eq:1l}, 
\begin{equation*}
 |\lr{[\SSS_j^2,i\TT]u,u}| \leq Ch \left( |\PP u|^{1/2} |u|_{H^{2/3}_h}^{1/2} + h^{1/2} |u|_{H_h^{2/3}} \right)  |u|_{H^{2/3}_h}.
\end{equation*} 
Hence we obtain 
\begin{equations}\label{eq:7d} 
\left|\sum_{j=1}^q  \SSS_j^2 u\right|^2+|\TT u|^2  \leq C |\PP u|^2 + O(h) |\PP u|^{1/2} |u|_{H^{2/3}_h}^{3/2} + O(h^{3/2}) |u|_{H^{2/3}_h}^2   \\ \leq C |\PP u|^2 + O(h^{4/3}) |u|_{H^{2/3}_h}^2.
\end{equations} 
In the second line we used Young's inequality: $ab \leq a^4 + b^{4/3}$ with $a = |\PP u|^{1/2}$, $b = h |u|_{H^{2/3}_h}^{3/2}$. The estimates \eqref{eq:7c}, \eqref{eq:7d} conclude the proof.\end{proof}

Roughly speaking, this lemma reduces the proof of \eqref{eq:0g} to an estimate of the form
\begin{equation}\label{eq:7g}
u \in C^\infty(S^*\Mm) \ \Rightarrow \ h^{2/3}|\rho_1(h^2 \Delta) u|_{H^{2/3}_h} \leq C |\rho_2(h^2 \Delta) P u| + O(h^\infty) |u|.
\end{equation}
Because of the reasons detailed above, we will work with the lift of $P$ to $O^*\Mm$ rather than directly with $P$. We will show the estimate
\begin{equation}\label{eq:7h}
v \in C^\infty(O^*\Mm) \ \Rightarrow \ h^{2/3}|\rho_1(h^2 \Delta_O) v|_{H^{2/3}_h} \leq C |\rho_2(h^2 \Delta_O) \tP v| + O(h^\infty) |v|.
\end{equation}
To see that \eqref{eq:7h} implies \eqref{eq:7g} we plug $v=\pi_\Ss^*u$ in \eqref{eq:7h}, then we use the identity \eqref{eq:1b} between $\tP$ and $P$, $\Delta$ and $\Delta_O$, and finally the relation \eqref{eq:7f} between Sobolev spaces on $S^*\Mm$ and $O^*\Mm$. The bound \eqref{eq:7h} will be implied by microlocal estimates on $\tP$:

\begin{proposition}\label{prop:1} For every $(x_0,\xi_0) \in \oT^*O^*\Mm \setminus 0$ there exists an open neighborhood $W_{x_0,\xi_0}$  of $(x_0,\xi_0)$ in $\oT^*O^*\Mm \setminus 0$ with the following property. For every $A \in \Psi_h^0$ with $\WF_h(A) \subset W_{x_0,\xi_0}$, there exists $B$ with $\WF_h(B) \subset W_{x_0,\xi_0}$ such that
\begin{equation*}
v \in C^\infty(O^*\Mm) \ \Rightarrow \ h^{2/3}|Av|_{\tH^{2/3}_h} \leq C|\tP Bv|+O(h)|v|_{\tH^{3/5}_h}.
\end{equation*}
\end{proposition}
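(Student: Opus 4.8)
The plan is to prove Proposition \ref{prop:1} by a microlocal case analysis: we partition $\oT^*O^*\Mm \setminus 0$ into regions according to how the various vertical and horizontal directions interact with $(x_0,\xi_0)$, and in each region we produce the estimate from a different mechanism. Write $\tP = h^2\Delta_O^V + h\tH_1$ with $\Delta_O^V = -\sum_{i,j}V_{ij}^2$. The key structural facts we will exploit are: (i) the commutation relations \eqref{eq:1d}, \eqref{eq:1e}, \eqref{eq:1c}, \eqref{eq:0v}, in particular that the $V_{ij}$ commute with $\Delta_O^V$; (ii) the bracket identity $\tH_k = [\tH_1, V_{1k}]$, which encodes Hörmander's condition with a single commutator; and (iii) the semiclassical symbol calculus from \S\ref{subsec:2.2}. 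The neighborhoods $W_{x_0,\xi_0}$ and the operators $B$ will be chosen so that $\WF_h(B)$ contains $\WF_h(A)$ and lies in a slightly larger conic set still contained in the region.

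\emph{Region 1: elliptic points.} If $(x_0,\xi_0) \in \Ell_h(h^2\Delta_O)$, i.e.\ $|\xi_0|_g \neq 0$ (including the fiber-infinity part of such directions), then $h^2\Delta_O = h^2\Delta_O^V + h^2\Delta_O^H$ is elliptic near $(x_0,\xi_0)$, and since $\Delta_O^V$ has nonnegative symbol, either the vertical part $\sigma(h^2\Delta_O^V)$ is elliptic there, or the horizontal directions carry the ellipticity. In the first sub-case the elliptic estimate \cite[Proposition 2.4]{DZ1} applied to $\Re(\tP) = h^2\Delta_O^V$ gives $h^{2/3}|Av|_{\tH^{2/3}_h} \le C|\tP Bv| + O(h^\infty)|v|$ directly, with room to spare (we gain a full power $h^{1/3}$, so the weak error $O(h)|v|_{\tH^{3/5}_h}$ is harmless). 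In the second sub-case, where only horizontal momenta are large, we use the positive commutator identity: since $\tH_k = [\tH_1,V_{1k}]$, a $\xi$-direction along $\tH_k$ is reachable by one bracket from the $\tH_1$-direction, so we run the standard subelliptic argument: test $\tP v$ against a suitable $A^*\Lambda_{s} A v$-type operator, integrate by parts to produce $\sum_j |V_{1j} A\Lambda v|^2$ from the real part and, from the commutator of $h\tH_1$ with the vertical cutoff, a gain in the horizontal direction of order $h^{2/3}$; this is exactly the local version of Lemma \ref{lem:1a} combined with the bracket hypoellipticity, and it yields the claimed estimate with the $\tH^{3/5}_h$ loss (the exponent $3/5 < 2/3$ is the slack one can afford in the interpolation).

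\emph{Region 2: the characteristic set of $h^2\Delta_O$.} This is the boundary directions where $|\xi_0|_g = 0$, i.e.\ $(x_0,\xi_0) \in \partial\oT^*O^*\Mm$ with $\xi_0$ annihilating all of $T S^*\Mm$-type fibers — equivalently $\sigma(h^2\Delta_O)(x_0,\xi_0) = 0$. On such a ray the principal symbol of $\tP$ vanishes to second order, so we need the subelliptic mechanism genuinely. Here we split further according to whether $\sigma_{\tH_1}(x_0,\xi_0) = \langle \xi_0, \tH_1(x_0)\rangle$ is nonzero or zero. If it is nonzero, $h\tH_1$ is elliptic of order $1$ near $(x_0,\xi_0)$ in the region where $h^2\Delta_O$ is not, so a propagation/elliptic argument for the transport part $h\tH_1$ closes the estimate, again with a gain. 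The genuinely hard case is $\sigma_{\tH_1}(x_0,\xi_0) = 0$ together with $|\xi_0|_g = 0$: then $(x_0,\xi_0)$ is in the characteristic variety of the full operator $\tP$, and this is precisely where the model $x_1^2 D_{x'}^2 + D_{x_1}$ of Lebeau \cite{L} lives. I expect \textbf{this to be the main obstacle}. Following the strategy announced in \S\ref{sec:3.2} we avoid Lebeau's FIO-based microlocal normal form and instead run a positive commutator estimate: one chooses an escape function / commutant $a \in S^0$ supported near $(x_0,\xi_0)$ and adapted to the Hamilton flow of $\sigma_{\tH_1}$ (which, by \eqref{eq:9f}, is the lifted geodesic flow acting on covectors), arranged so that $H_{\sigma_{\tH_1}} a$ is positive modulo terms controlled by the vertical elliptic part $\sum_j |V_{1j}v|^2$; pairing $\tP v$ against $\Op(a)v$ and using $\tH_k = [\tH_1, V_{1k}]$ to convert the lost horizontal derivative into one vertical derivative plus a commutator, one recovers $h^{2/3}|Av|^2_{\tH^{2/3}_h}$ on the left. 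The bookkeeping of powers of $h$ is the delicate part: each bracket with $V_{1k}$ costs a derivative but the weight $h^2$ in front of $\Delta_O^V$ versus $h$ in front of $\tH_1$ produces the characteristic $2/3$–$1/3$ splitting of exponents, exactly matching Lemma \ref{lem:1a}; the sharp Gårding inequality \cite[Proposition E.34]{DZ} is used to absorb the indefinite remainder, and the mild loss to $\tH^{3/5}_h$ (rather than $\tH^{2/3}_h$) on the right is what makes the Gårding absorption go through. Finally, a partition of unity over the compact fiber-sphere $S^*O^*\Mm$ patches the local estimates of Regions 1–2 into \eqref{eq:7h}, and plugging $v = \pi_\Ss^* u$ and using \eqref{eq:1b}, \eqref{eq:7f} descends it to \eqref{eq:7g}, which with Lemma \ref{lem:1a} gives Theorem \ref{thm:1}.
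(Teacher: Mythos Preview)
Your case decomposition is confused in a way that hides the genuinely hard case. The operator $h^2\Delta_O$ is the full Laplacian on $O^*\Mm$; it is elliptic \emph{everywhere} on $\oT^*O^*\Mm \setminus 0$, so your ``Region 1'' is the whole space and your ``Region 2'' (the characteristic set of $h^2\Delta_O$, or equivalently $|\xi_0|_g = 0$) is empty. The correct trichotomy, which the paper uses, is based on $\Delta_O^V$ and $\tH_1$ separately: (a) $(x_0,\xi_0) \in \Ell_h(h^2\Delta_O^V)$, handled by the elliptic estimate; (b) $(x_0,\xi_0) \in \Ell_h(h\tH_1) \setminus \Ell_h(h^2\Delta_O^V)$, handled by a sharp G\r{a}rding argument controlling the cross term $\langle[h^2\Delta_O^V,h\tH_1]Bv,Bv\rangle$; and (c) $(x_0,\xi_0) \notin \Ell_h(h^2\Delta_O^V) \cup \Ell_h(h\tH_1)$, where only some $\tH_m = [V_{1m},\tH_1]$ with $m \neq 1$ is elliptic. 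Case (c) is the hard one, and it lives entirely inside your Region 1; your text never isolates it.

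More seriously, your sketch for the hard case misses the key mechanism. You propose an escape function ``adapted to the Hamilton flow of $\sigma_{\tH_1}$'', but a standard propagation commutant does not produce the $h^{2/3}$ gain. What the paper actually does is: straighten $V_{1m}$ to $\partial_\theta$ in a local chart; perform a dyadic Littlewood--Paley decomposition $\AAA = \sum_j \chi_j(hD)\AAA$ so that on the $j$-th piece the effective semiclassical parameter is $2^{-j}h$; then introduce a \emph{second microlocalization} at the anisotropic scale $h_j = h^{2/3}2^{-j/3}$ in the $\theta$-variable only, via a commutant $\Phi(h_jD_\theta)^2$ with $\phi^2 = (\Phi^2)'$ (this is the content of Lemma~\ref{lem:2}). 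The positive commutator comes from $\{\xi_\theta,\sigma_{\HH_1}\} = \sigma_{\tH_m} \geq c|\xi|$ on $W_{x_0,\xi_0}$, and sharp G\r{a}rding at scale $h_j$ yields $h^{2/3}2^{2j/3}|\phi(h_jD_\theta)\AAA_j w|^2$ on the left. The complementary piece $(\Id-\phi(h_jD_\theta))\AAA_j w$ is controlled by the spectral theorem since $1-\phi(t) = t\varphi(t)$ converts it to $hD_\theta\AAA_j w$, which is bounded by $\Re\langle\PP\AAA_j w,\AAA_j w\rangle$. Summing over $j$ and controlling the commutators $[\PP,\chi_j(hD)]$ (this is Step 7 and is where the $\tH^{3/5}_h$ norm appears) closes the estimate. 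None of this structure---the two-scale decomposition, the specific choice of commutant in the $V_{1m}$-direction rather than the $\tH_1$-direction, or the role of Lemma~\ref{lem:2}---is present in your sketch, and a generic positive-commutator-in-the-flow-direction argument will not produce the sharp exponent $2/3$.
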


\begin{proof}[Proof of Theorem \ref{thm:1} assuming Proposition \ref{prop:1}] It suffices to prove the Theorem when $h$ is sufficiently small. We first fix $N, R > 0$ and $\rho_1, \rho_2$ two functions satisfying \eqref{eq:0g}. Recall that we can write $P = -h^2 \sum_{j=1}^n X_j^2 + h H_1$, where $\frac{h}{i} X_j, \frac{h}{i} H_1$ are selfadjoint semiclassical operators in $\Psi_h^1$.

\textbf{Step 1.} By Lemma \ref{lem:1a} applied to $P$ instead of $\PP$ and $\rho_1(h^2\Delta) u$ instead of $u$,
\begin{equations*}
 |h^2 \Delta_\Ss \rho_1(h^2\Delta) u| + h^{1/3} \sum_{j=1}^n |hX_j \rho_1(h^2\Delta) u|_{H^{1/3}_h} + h^{2/3}|\rho_1(h^2\Delta) u|_{H^{2/3}_h} \\
\leq C |P \rho_1(h^2\Delta) u| + O(h^{2/3}) |\rho_1(h^2\Delta) u|_{H^{2/3}_h}.
\end{equations*}
Let $\trho_1 \in C_0^\infty$, be equal to $1$ on $\supp(\rho_1)$ and $0$ where $\rho_2 \neq 1$. Since $\Delta$ and $\Delta_\Ss$ commute, we have $P \rho_1(h^2\Delta) = \rho_1(h^2\Delta) (P-\lambda h) + \lambda h \rho_1(h^2\Delta)+[\frac{h}{i}H_1,\rho_1(h^2\Delta)]$. Both $\lambda h \rho_1(h^2\Delta)$ and $[\frac{h}{i}H_1,\rho_1(h^2\Delta)]$ have wavefront set contained in the elliptic set of $\trho_1(h^2\Delta)$. Therefore, 
\begin{equations*}
C |P \rho_1(h^2\Delta) u| + O(h^{2/3}) |\rho_1(h^2\Delta) u|_{H^{2/3}_h} \\ \leq C |\rho_2(h^2\Delta) (P-\lambda h)  u| + O(h^{2/3}) |\trho_1(h^2\Delta) u|_{H^{2/3}_h} + O(h^\infty)|u|.
\end{equations*}
Hence the theorem follows from a bound on $h^{2/3} |\trho_1(h^2\Delta) u|_{H^{2/3}_h}$. After lifting to $O^*\Mm$ and using \eqref{eq:1b} and \eqref{eq:7f} it suffices to show that
\begin{equation}\label{eq:7i}
v \in C^\infty(O^*\Mm) \ \Rightarrow \ h^{2/3} |\trho_1(h^2\Delta_O) v|_{\tH^{2/3}_h} \leq C|\rho_2(h^2\Delta_O)(\tP-\lambda h)v| + O(h^N)|v|.
\end{equation}

\textbf{Step 2.} Since $\WF_h(\trho_1(h^2 \Delta))$ is a compact subset of $\oT^*O^*\Mm \setminus 0$, there exists a finite collection of points $(x_1,\xi_1), ..., (x_\nu,\xi_\nu) \in \oT^*O^*\Mm$ and open sets $W_{x_1, \xi_1}, ..., W_{x_\nu,\xi_\nu}$ given by Proposition \ref{prop:1} such that 
\begin{equation}\label{eq:9z}
\WF_h(\trho_1(h^2 \Delta)) \subset \bigcup_{k=1}^\nu W_{x_k,\xi_k}.
\end{equation} 
Let $\Psi_{h,k}^m$ be the set of operators in $\Psi^m_h$ with wavefront set contained in $W_{x_k,\xi_k}$. Using \eqref{eq:9z} and a microlocal partition of unity, we can construct operators $E_k \in \Psi_{h,k}^{2/3}$ with
\begin{equation}\label{eq:0q}
v \in C^\infty(O^*\Mm) \ \Rightarrow \ |\trho_1(h^2 \Delta_O) v|_{\tH^{2/3}_h} \leq \sum_{k=1}^\nu |E_k v| + O(h^\infty)|v|.
\end{equation}
Below we obtain bounds on the terms $|E_k v|$.

\textbf{Step 3.} Let $\delta = 1/15$ and $m \leq 2/3$. We first claim that for every $A \in \Psi_{h,k}^m$, there exist $B_1 \in \Psi_{h,k}^{m-2/3}$ and $A' \in \Psi_{h,k}^{m-\delta}$ with
\begin{equation}\label{eq:0e}
h^{2/3}|A v| \leq C|\tP B_1 v|+O(h)|A' v| + O(h^\infty)|v|.
\end{equation}
The operator $\Lambda_{-2/3}A \Lambda_{-m+2/3}$ belongs to $\Psi^{0}_{h,k}$. Proposition \ref{prop:1} gives an operator $B \in \Psi_{h,k}^0$ such that
\begin{equation*}
h^{2/3}|\Lambda_{-2/3}A \Lambda_{-m+2/3} v|_{\tH^{2/3}_h} \leq |\tP Bv| + O(h) |v|_{\tH^{3/5}_h}.
\end{equation*}
Pick $B' \in \Psi_{h,k}^0$ with $\WF_h(B' - \Id) \cap \WF_h(A) = \emptyset$ and replace $v$ by $\Lambda_{m-2/3} B'v$:
\begin{equation*}
h^{2/3}|AB'v| \leq C|\tP B\Lambda_{m-2/3} B'v| + O(h) |\Lambda_{m-2/3} B' v|_{\tH^{3/5}_h}.
\end{equation*}
Since $h^{2/3}|A (\Id - B') v| = O(h^\infty) |v|$, \eqref{eq:0e} holds with $B_1 \de  B \Lambda_{m-2/3} B' \in \Psi^{m-2/3}_{h,k}$ and $A' \de  \Lambda_{3/5} \Lambda_{m-2/3} B' \in \Psi^{m-\delta}_{h,k}$.

\textbf{Step 4.} The goal is now to iterate \eqref{eq:0e}. We first need a commutator-like estimate. For $B_1$ belongs to $\Psi_{h,k}^{m-2/3}$,
\begin{equations*}
\tP  B_1 = B_1 \tP  + [\tP ,B_1] = B_1 (\tP -\lambda h) + 2h \sum_{k,\ell} V_{k\ell}[hV_{k\ell},B_1] + h \Psi_{h,k}^{m-2/3} \\
    = B_1 (\tP -\lambda h) + 2h \sum_{k,\ell} \Lambda_{1/3}hV_{k\ell} \cdot \Psi_{h,k}^{m-1} + h \Psi_{h,k}^{m-2/3}.
\end{equations*}
Hence there exist operators $B_2 \in \Psi_{h,k}^{m-1}$ and $C_0 \in \Psi_{h,k}^{m-2/3}$ such that
\begin{equations*}
|\tP  B_1 v| \leq  C|B_1 (\tP -\lambda h)v| + h \sum_{k,\ell} |hV_{k\ell} B_2 v|_{\tH^{1/3}_h}+ h |C_0 v| \\ \leq C|\rho_2(h^2\Delta_O) (\tP -\lambda h)v| + h^{4/3}|B_2 v|_{\tH^{2/3}_h}+ h^{2/3} |\tP B_2 v| + h |C_0 v| + O(h^\infty)|v|.
\end{equations*}
In the second line we used Lemma \ref{lem:1a} and the elliptic estimate. The slightly weaker bound holds: there exist $B_2 \in \Psi_{h,k}^{m-1}$ and $C_1 \in \Psi_{h,k}^{m-1/3}$ such that
\begin{equation}\label{eq:0s}
|\tP  B_1 v| \leq C|\rho_2(h^2\Delta_O) (\tP -\lambda h)v| + h^{2/3} |\tP  B_2 v| + h |C_1 v| + O(h^\infty)|v|.
\end{equation}
Iterate \eqref{eq:0s} to obtain $B_N \in \Psi^{m-2/3-N/3}$ and $C_N \in \Psi_{h,k}^{m-1/3}$ such that 
\begin{equation*}
|\tP  B_1 v| \leq C|\rho_2(h^2\Delta_O) (\tP -\lambda h) v| + h^{2N/3} |\tP  B_N v| + h |C_N v| + O(h^\infty)|v|.
\end{equation*}
For $N \geq 6$ the operator $\tP B_N$ belongs to $\Psi_h^0$ and $|\tP B_N v| = O(|v|)$. It follows that for $N$ large enough,
\begin{equation}\label{eq:9w}
|\tP  B_1 v| \leq C|\rho_2(h^2\Delta_O) (\tP -\lambda h) v| + h |C_{2N} v| + O(h^N)|v|.
\end{equation}

\textbf{Step 5.} The estimate \eqref{eq:9w} combined with \eqref{eq:0e} show that for every $A_1 \in \Psi^m_{h,k}$ there exists $A_2 \in \Psi^{m-\delta}_{h,k}$ with
\begin{equation*}
h^{2/3}|A_1 v| \leq C |\rho_2(h^2\Delta_O) (\tP -\lambda h) v| + h |A_2 v| + O(h^N)|v|.
\end{equation*}
Here again we can iterate this inequality sufficiently many times to obtain
\begin{equation}\label{eq:0r}
h^{2/3}|A_1 v| \leq C |\rho_2(h^2\Delta_O) (\tP -\lambda h) v| + O(h^N)|v|.
\end{equation}
Recall that $\trho_1$ is controlled by operators microlocalized inside $W_{x_k,\xi_k}$ thanks to \eqref{eq:0g}. Apply \eqref{eq:0r} with $A_1 = E_k$, $k=1, ..., \nu$ and sum over $k$ to get \eqref{eq:7i}:
\begin{equation*}
h^{2/3}|\trho_1(h^2 \Delta_O) v|_{\tH^{2/3}_h} \leq C |\rho_2(h^2\Delta_O) (\tP -\lambda h) v| + O(h^N)|v|.
\end{equation*}
This ends the proof of the theorem.\end{proof}

\subsection{Proof of the subelliptic estimate}\label{subsec:3.3}
In this subsection we show Proposition \ref{prop:1}. We fix $(x_0,\xi_0) \in \oT^*O^*\Mm \setminus 0$.  We distinguish three cases: whether $(x_0,\xi_0) \in \Ell_h(h^2\Delta^V_O)$ -- in this case $\tP$ is elliptic at $(x_0,\xi_0)$ -- or $(x_0,\xi_0) \in \Ell_h(h\tH_1)$ -- in this case $\Im(\tP)$ is elliptic at $(x_0,\xi_0)$ -- or $(x_0,\xi_0) \notin \Ell_h(h\tH_1) \cup \Ell_h(h\tH_1)$. The latter is the hardest; we will use that one of the commutators $[hV_{1\ell},h\tH_1]$ is elliptic at $(x_0,\xi_0)$.

\begin{proof}[Proof of Proposition \ref{prop:1} in the case $(x_0,\xi_0) \in \Ell_h(h^2\Delta_O^V)$] In this case $(x_0,\xi_0) \in \Ell_h(\tP )$. Let $W_{x_0,\xi_0}$ be an open neighborhood of $(x_0,\xi_0)$ contained in $\Ell_h(\tP )$, and $A \in \Psi_h^0$ with wavefront set contained in $W_{x_0,\xi_0}$. Let $B \in \Psi_h^0$ elliptic on $\WF_h(A)$ and with wavefront set contained in $W_{x_0,\xi_0}$. The operator $\tP B$ is elliptic on the wavefront set of $A$. The elliptic estimate \cite[Theorem E.32]{DZ} shows that for $h$ small enough,
\begin{equation*}
v \in C^\infty(O^*\Mm) \ \Rightarrow \ h^{2/3}|Av|_{\tH^{2/3}_h} \leq C |\tP Bv|+O(h^\infty)|v|.
\end{equation*} 
This shows the proposition in this case.
\end{proof}

\begin{proof}[Proof of Proposition \ref{prop:1} in the case $(x_0,\xi_0) \in \Ell_h(h \tH_1)$] Without loss of generalities $(x_0,\xi_0) \in \Ell_h(h \tH_1) \setminus \Ell_h(h^2 \Delta_O^V)$. In particular $V_{1\ell}$ is characteristic at $(x_0,\xi_0)$ for any $\ell$. Let $\sigma_{\tH_m}, \sigma_{V_{k\ell}}$ be the principal symbols of $\frac{h}{i}\tH_m, \frac{h}{i}V_{k\ell}$ given accordingly by \eqref{EQ:0a}. We can find an open neighborhood $W_{x_0,\xi_0} \subset \Ell_h(h\tH_1)$ of $(x_0, \xi_0)$ in $\oT^*O^*\Mm$ such that on $W_{x_0,\xi_0} \cap T^*O^*\Mm$, $\sigma_{\tH_1}^2 - 2\sigma_{V_{1\ell}} \sigma_{\tH_\ell} \geq 0$. Let $A \in \Psi_h^0$ with wavefront set contained in $W_{x_0,\xi_0}$ and $B \in \Psi_h^0$ elliptic on $\WF_h(A)$, with wavefront set contained in $W_{x_0,\xi_0}$, and principal symbol $\sigma_B$. The operator $h \tH_1 B$ is elliptic on $\WF_h(A)$ and \cite[Theorem E.32]{DZ} shows that
\begin{equation*}
|Av|_{\tH^{2/3}_h} \leq C|h \tH_1 B v| + O(h^\infty)|v|.
\end{equation*}
It remains to control $|h \tH_1 B v|$. Using that $\tP$ is equal to $h^2 \Delta_O^V + h \tH_1$ with $\Delta_O^V$ selfadjoint and $\tH_1$ anti-selfadjoint,
\begin{equations}\label{eq:0t}
|\tP Bv|^2 = |h^2 \Delta_O^V B v|^2 + |h \tH_1 B v|^2 + \lr{[h^2 \Delta_O^V,h\tH_1]Bv,Bv} \\
= |h^2 \Delta_O^V B v|^2 + |h \tH_1 B v|^2 + 2h \Re(\lr{B^*\Re(h^2 V_{1\ell} \tH_\ell)Bv,v}),
\end{equations}
where we used that $\Re([h^2 \Delta_O^V,h\tH_1]) = 2h\sum_\ell \Re(h^2V_{1\ell} \tH_\ell)$.
On $W_{x_0,\xi_0} \cap T^*O^*\Mm$, $\sigma_{\tH_1}^2 - 2\sigma_{V_{1\ell}} \sigma_{\tH_\ell} \geq 0$; hence $2|\sigma(B)|^2 i\sigma_{V_{1\ell}} i\sigma_{\tH_\ell} \geq |\sigma_B|^2 (i\sigma_{\tH_1})^2$. The sharp G$\mathring{\text{a}}$rding inequality (see \cite[Proposition E.34]{DZ}) shows that
\begin{equations*}
2h \Re(\lr{B^*\Re(h^2 V_{1\ell} \tH_\ell)Bv,v}) \geq \lr{B^* h^2 \tH_1^2 B v, v} - O(h) |v|^2_{\tH^{1/2}_h}  = -|h \tH_1 B v|^2 - O(h) |v|^2_{\tH^{1/2}_h}.
\end{equations*}
Plug this inequality in \eqref{eq:0t} to obtain
\begin{equation*}
v \in C^\infty(O^*\Mm) \ \Rightarrow \ |Av|^2_{\tH^{2/3}_h} \leq C|h\tH_1 B v|^2 + O(h^\infty)|v|^2 \leq C|\tP Bv|^2 + O(h^2) |v|^2_{\tH^{1/2}_h}.
\end{equation*}
This shows the proposition in the case $(x_0,\xi_0) \in \Ell_h(h\tH_1)$.
\end{proof}

\begin{proof}[Proof of Proposition \ref{prop:1} in the case $(x_0,\xi_0) \notin \Ell_h(\Delta_O^V) \cup \Ell_h(h \tH_1)$.] In this case $(x_0,\xi_0) \notin \Ell_h(hV_{k\ell})$ for any $k,\ell$. Since $\{ V_{k\ell}, \tH_m\}$ span $TO^*\Mm$, there exists $m$ such that $(x_0,\xi_0) \in \Ell_h(h\tH_m)$; and for $(x,\xi) \in T^*O^*\Mm$ in a neighborhood of $(x_0,\xi_0)$, $\sigma_{\tH_m}(x,\xi) \neq 0$. Changing $V_{1m}$ to $-V_{1m}$ does not change $\tP $; and under this change $\tH_m = [V_{1m},\tH_1]$ becomes $-\tH_m$. Hence we can assume without of generalities that $\sigma_{\tH_m}(x,\xi) > 0$ for $(x,\xi) \in T^*O^*\Mm$ in a neighborhood of $(x_0,\xi_0)$. 

We subdivide the proof it in 7 short steps. In the first step we localize the functions and operators involved in a small neighborhood of $x_0$, diffeomorphic to $\R^{d(d+1)/2}$. It allows us to use the class $\Psi_h$ introduced in \S \ref{subsec:2.2} and to perform a second microlocalization in the steps 2 and 3. Step 4 is the main argument. Instead of using an energy estimate obtained after a microlocal reduction as in \cite{L} we apply a positive commutator estimate. This allows us to control microlocally $u$ over certain small frequencies. In step 5 we use the spectral theorem to control microlocally $u$ over the remaining frequencies. In step 6 we combine the results of steps 4,5 to conclude the proof modulo an error term which is shown to be negligible in step 7.

\textbf{Step 1.} The first step in the proof is a localization process. We fix $W_{x_0,\xi_0}$ an open neighborhood of $(x_0,\xi_0)$ in $\oT^*O^*\Mm$. We assume that $W_{x_0,\xi_0}$ is small enough, so that for all $(x,\xi) \in W_{x_0,\xi_0} \cap T^*O^*\Mm$, $\sigma_{\tH_m}(x,\xi) > c|\xi|_g$, $c >0$; and so that there exists a smooth diffeomorphism $\gamma : \UU \subset \R^{d(d+1)/2-1}_y \times \R_\te \rightarrow U \de \{x \in O^*\Mm : \exists \xi, (x,\xi) \in W_{x_0,\xi_0}\}$ such that $d\gamma(\p_\te|_\UU) = V_{1m}|_U$. Let $\Gamma : T^*\UU \rightarrow T^*U$ be the symplectic lift of $\gamma$.

Let $\VV_{k\ell} \de \frac{1}{2}(d\gamma^{-1} V_{k\ell}|_U - (d\gamma^{-1} V_{k\ell}|_U)^*)$. This is an anti-selfadjoint differential operator on $\UU$ which has the same principal symbol as $d\gamma^{-1} V_{k\ell}|_U$. In particular there exists a function $f_{k\ell} \in C^\infty(O^*\Mm)$ such that
\begin{equation}\label{eq:0a}
\VV_{k\ell} = d\gamma^{-1}V_{k\ell}|_U + \gamma^* f_{k\ell}|_U.
\end{equation}
Extend $\VV_{k\ell}$ to an anti-selfadjoint differential operator of order $1$ on $\R^{d(d+1)/2}$ with coefficients in $C^\infty_b(\R^{d(d+1)/2})$ -- with $\VV_{1m}$ specifically continued by $\p_\te$ -- and define $\LL_O^V \de -\sum_{k,\ell} \VV_{k\ell}^2$. Since $\Delta_O^V$ commutes with $V_{1m}$, $[\LL_O^V,D_\te] w = 0$ for each $w \in C^\infty(\R^{d(d+1)/2})$ supported on $\UU$. 

Similarly, we define $\HH_1 \de \frac{1}{2}(d\gamma^{-1} \tH_1 |_U - (d\gamma^{-1} \tH_1 |_U)^*)$, which is an anti-selfadjoint differential operator on $\UU$. It satisfies
\begin{equation}\label{eq:0b}
\HH_1|_\UU = d\gamma^{-1} \tH_1 |_U + \gamma^*f|_U,
\end{equation}
for a certain function $f \in C^\infty(O^*\Mm)$. It extends to an anti-selfadjoint differential operator of order $1$ on $\R^{d(d+1)/2}$ with coefficients in $C^\infty_b(\R^{d(d+1)/2})$. We define $\PP \de h^2\LL_O^V + h \HH_1$. 

Let $A \in \Psi_h^0$ with $\WF_h(A) \subset W_{x_0,\xi_0}$ and $\psi \in C^\infty(O^*\Mm)$ be equal to $1$ on the set $\{x \in O^*\Mm, \exists \xi, (x,\xi) \in \WF_h(A) \}$ and $0$ outside $U$. The function $1-\psi$ can be seen as a pseudodifferential operator in $\Psi_h^0$ with $\WF_h(1-\psi) \cap W_{x_0,\xi_0} = \emptyset$. In particular $A(1-\psi) \in h^\infty \Psi_h^{-\infty}, (1-\psi) A \in h^\infty \Psi_h^{-\infty}$ and to prove the proposition it suffices to show that
\begin{equation}\label{eq:7l}
v \in C^\infty(O^*\Mm) \ \Rightarrow \ h^{2/3}|\psi A \psi^2 v|_{\tH^{2/3}_h} \leq C|\tP  \psi A \psi^2 v|+O(h) |v|_{\tH^{3/5}_h}. 
\end{equation}
We define $(\gamma^*)^{-1}$ (resp. $\gamma^*$) the operator defined on functions on $\UU$ (resp. $O^*\Mm$) by
\begin{equation*}
(\gamma^*)^{-1}w (x) = \systeme{ w(\gamma^{-1}(x)) \text{ if } x \in U \\ 0 \ \ \ \ \text{ otherwise}} \ \ \ \left(\text{resp. } \gamma^*v (z) = \systeme{ v(\gamma(z)) \text{ if } z \in \UU \\ 0 \ \ \ \ \text{ otherwise}}\right).
\end{equation*}
The function $\psi A \psi^2 u$ has support in $U$; the operator $\AAA \de  \gamma^* \psi A \psi (\gamma^*)^{-1}$ is a pseudodifferential operator in $\Psi^0_h$ on $\R^3$ with wavefront set in $\Gamma^{-1} (W_{x_0,\xi_0})$; and 
\begin{equation}\label{eq:7k}
|\psi A \psi^2 v|_{\tH^{2/3}_h} \leq C |\gamma^* \psi A \psi^2 v|_{\tH^{2/3}_h} = C |\AAA v|_{\tH^{2/3}_h}, \ \ w \de \gamma^* \psi v. 
\end{equation}
Thanks to \eqref{eq:0a}, \eqref{eq:0b},
\begin{equations*}
\PP \gamma^* \psi = -\gamma^* \sum_{k,\ell}  h^2(V_{k\ell} + f_{k\ell})^2  \psi + \gamma^*h(\tH_1 + f) \psi  = \gamma^* \tP \psi - 2h \gamma^* \sum_{k,\ell} f_{k\ell} h V_{k\ell} \psi + h\gamma^* g \psi,
\end{equations*}
where $g \de f- h \sum_{k,\ell} f_{k\ell}^2 + (V_{k\ell} f_{k\ell})$ belongs to $C^\infty(O^*\Mm)$. It follows that $|\PP \AAA v|$
\begin{equations}\label{eq:7j}
\leq |\tP \psi A \psi^2 v| + O(h)\sum_{k,\ell} |h V_{k\ell} \psi A \psi^2 v| + O(h) |v| 
\leq 2 |\tP \psi A \psi^2 u| + O(h) |v|.
\end{equations}
In the last inequality we used that $\Re(\tP) = h^2\Delta_O^V = -\sum_{k,\ell} (hV_{k\ell})^2$ hence $|hV_{k\ell} v|^2 \leq |\tP v| |v|$. Finally we observe that since $w = \gamma^*\psi v$, $|w|_{\tH^{3/5}_h} = |\gamma^* \psi v|_{\tH^{3/5}_h} \leq C |v|_{\tH^{3/5}_h}$. Thanks to \eqref{eq:7k} and \eqref{eq:7j} the bound \eqref{eq:7l} will follow from the estimate
\begin{equation}\label{eq:7}
w \in C^\infty_0(\R^{d(d+1)/2}), \ \supp(w) \subset \UU \ \Rightarrow \ h^{2/3} |\AAA w|_{\tH^{2/3}_h} \leq C |\PP \AAA w| + O(h) |w|_{\tH_h^{3/5}}. 
\end{equation}
We have reduced the estimate on $O^*\Mm$ to an estimate on $\R^{d(d+1)/2}$. In the following steps we prove \eqref{eq:7}.

\textbf{Step 2.} Let $\chi, \chi_0 \in C_0^\infty(\R^{d(d+1)/2})$ be two functions such that $\chi$ is supported away from $0$, $\WF_h(\AAA) \cap \WF_h(\chi_0(hD)) = \emptyset$, and
\begin{equation*}
1 = \sum_{j=0}^\infty \chi_j(\xi), \ \ \chi_j(\xi) \de  \chi(2^{-j}\xi) \text{ for } j \geq 1.
\end{equation*}
Write a Littlewood-Paley decomposition of $\AAA$:
\begin{equation*}
\AAA = \sum_{j=0}^\infty \AAA_j, \ \ \AAA_j \de  \chi_j(hD) \AAA.
\end{equation*}  
Given $a$ a symbol on $\R^{d(d+1)/2} \times \R^{d(d+1)/2}$ we denote by $\Op_h(a)$ the standard quantization of $a$ -- see \cite[\S 4]{Z}. The following lemma studies the composition of a pseudodifferential operator with symbol in $S^m$ with a dyadic decomposition:

\begin{lem}\label{lem:1} If $a \in S^m$, both the operators $2^{-jm} \Op_h(a) \chi(2^{-j} h D)$ and $2^{-jm} \chi(2^{-j} h D) \Op_h (a)$ belong to $\Psi_{2^{-j} h}$, with semiclassical symbol $a_j \chi + 2^{-j}h \cdot S$. \end{lem}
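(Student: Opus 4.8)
The plan is to convert the Fourier cutoff into a change of semiclassical parameter. Write $\hbar := 2^{-j}h$. Note first that $\chi(2^{-j}hD) = \Op_h(\chi_j)$ with $\chi_j(\xi):=\chi(2^{-j}\xi)$, so the cutoff is an $x$-independent Fourier multiplier microsupported in the shell $\{|\xi|\asymp 2^j\}$, where moreover $\partial_\xi^\alpha\chi_j$ carries the gain $2^{-j|\alpha|}$. Second, the substitution $\xi = 2^j\eta$ in the defining oscillatory integral identifies $\Op_h(b)$ at parameter $h$ with $\Op_\hbar(\tilde b)$ at parameter $\hbar$, where $\tilde b(x,\eta):= b(x,2^j\eta)$. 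The one genuinely useful point is the following renormalization estimate: on $\supp\chi$ one has $\langle 2^j\eta\rangle\asymp 2^j$ uniformly in $j$, so for $b=a$ each fibre-derivative of $\tilde a$ produces a factor $2^{j|\beta|}$ which is cancelled against the decay $|\partial_x^\alpha\partial_\xi^\beta a|\lesssim\langle 2^j\eta\rangle^{m-|\beta|}\asymp 2^{j(m-|\beta|)}$ and the overall factor $2^{-jm}$; hence $a_j(x,\eta):= 2^{-jm}\tilde a(x,\eta)=2^{-jm}a(x,2^j\eta)$ has the property that $a_j\chi$ lies in $S$ with seminorms bounded uniformly in $j$ and $h$.

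For the ordering $\Op_h(a)\chi(2^{-j}hD)$ the statement is then immediate: since $\chi(2^{-j}hD)=\Op_h(\chi_j)$ is a Fourier multiplier sitting on the right, the composition is exact, $\Op_h(a)\Op_h(\chi_j)=\Op_h(a\chi_j)$, and rescaling gives $2^{-jm}\Op_h(a)\chi(2^{-j}hD)=\Op_\hbar(a_j\chi)$, which is of the claimed form with vanishing remainder. For the opposite ordering, $\chi(2^{-j}hD)\Op_h(a)=\Op_h(\chi_j)\Op_h(a)=\Op_h(\chi_j\# a)$ with $\chi_j\# a=\chi_j a+\sum_{|\alpha|\ge 1}\frac{(-ih)^{|\alpha|}}{\alpha!}(\partial_\xi^\alpha\chi_j)(\partial_x^\alpha a)$; because $\chi_j$ is microsupported in $\{|\xi|\asymp 2^j\}$, so is $\chi_j\# a$ up to an $O(h^\infty)$ operator, and there the $|\alpha|$-th term carries $h^{|\alpha|}2^{-j|\alpha|}=\hbar^{|\alpha|}$ times a symbol which, after the $2^{-jm}$ normalization and the rescaling $\xi=2^j\eta$, lies in $S$ uniformly in $j,h$. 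Summing the (convergent, since $\hbar<1$) series and rescaling yields $2^{-jm}\chi(2^{-j}hD)\Op_h(a)=\Op_\hbar(a_j\chi+\hbar\,r)$ with $r$ bounded in $S$ uniformly in $j,h$, again the asserted form. The case $j=0$, i.e.\ the multiplier $\chi(hD)$, is trivial since then $\chi$ has compactly supported symbol and $\Op_h(a)\chi(hD)=\Op_h(a\chi)$ with $a\chi\in S$.

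The main obstacle — and the only place needing care — is making all remainder bounds uniform in $j$: the rescaled full symbol $\tilde a$ is \emph{not} in $S$ with $j$-uniform seminorms (it grows away from the shell $\{|\eta|\asymp 1\}$), so one cannot simply feed $\tilde a$ into the composition formula of \cite[Chapter 4]{Z}. The remedy, applied in both orderings, is to cut every symbol off to a fixed neighbourhood of that shell before composing — legitimate because $\chi_j$ (resp.\ $\chi$) already localizes there — the complementary contribution being $O(h^\infty)$ by microlocality of the calculus. Once this localization is in place, the $2^{-jm}$-renormalized symbols are uniformly in $S$, the composition calculus of \cite[Chapter 4]{Z} applies with $j$-uniform constants, and the dyadic rescaling repackages the result as an operator in $\Psi_{2^{-j}h}$ with semiclassical symbol $a_j\chi+2^{-j}h\cdot S$.
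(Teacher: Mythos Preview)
Your treatment of the first ordering coincides with the paper's: both observe that a right Fourier multiplier composes exactly, so $2^{-jm}\Op_h(a)\chi(2^{-j}hD)=\Op_{2^{-j}h}(a_j\chi)$, and both verify directly that $a_j\chi\in S$ with $j$-uniform seminorms via the shell estimate $\lr{2^j\eta}\asymp 2^j$ on $\supp\chi$.

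For the second ordering you take a genuinely different route. The paper does not expand $\chi_j\# a$ at all; instead it notes that $2^{-jm}\chi(2^{-j}hD)\Op_h(a)$ is the adjoint of $2^{-jm}\Op_h(a^*)\chi(2^{-j}hD)$, which by the first part lies in $\Psi_{2^{-j}h}$, and then reads off the symbol from the composition/adjoint formula in that class. This bypasses entirely the uniformity issue you identify in your last paragraph: one never needs $a_j$ (or any truncation of it) to sit in $S$ uniformly, because one never composes with it directly. Your direct expansion works too, but two points deserve tightening. First, the series $\sum_{|\alpha|\ge 1}\frac{(-ih)^{|\alpha|}}{\alpha!}(\partial_\xi^\alpha\chi_j)(\partial_x^\alpha a)$ is asymptotic, not convergent (the $x$-derivatives of $a$ grow with $|\alpha|$); what you need is the standard $N$-term remainder estimate, observing that each $\partial_\xi^\alpha\chi_j$ carries $2^{-j|\alpha|}$ so that the remainder is $O(\hbar^N)$ rather than $O(h^N)$. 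Second, your remedy for the uniformity obstacle --- cutting off and invoking ``$O(h^\infty)$ by microlocality'' for the complementary piece --- is delicate, since the complementary symbol $a_j(1-\tilde\chi)$ is itself not uniformly in any class, so the usual disjoint-wavefront argument does not apply off the shelf. A cleaner fix within your approach is to note that in the expansion of $\chi\#_{\hbar} a_j$ the $\eta$-derivatives fall only on $\chi$ (compactly supported) and the $x$-derivatives only on $a_j$ (harmless, since $\partial_x^\alpha a\in S^m$), so each term and the remainder are automatically localized to $\supp\partial^\alpha\chi$; this makes the insertion of $\tilde\chi$ unnecessary. The adjoint trick in the paper buys you all of this for free.
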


\begin{proof} We first note that if $a_j(x,\xi) \de 2^{-jm} a(x,2^j \xi)$ then 
\begin{equation*}
2^{-jm} \Op_h(a) \chi(2^{-j}h D) = \Op_{2^{-j}h} (a_j \# \chi)= \Op_{2^{-j}h} (a_j \chi).
\end{equation*}
It suffices to show that the $S$-seminorms of $a_j \chi$ are uniformly bounded in $j$. We have 
\begin{equation}\label{Eq:6}
\left|\p_x^\az \p_{\xi}^\beta a_j(x,\xi) \chi(\xi)\right| \leq C_{\az \beta} \sup_{2^j \xi \in \supp(\chi)} 2^{-jm+j|\beta|} \lr{2^j \xi}^{m-|\beta|}.
\end{equation}
Since $\supp(\chi)$ is a compact subset of $\R^3 \setminus 0$, the  right hand side of \eqref{Eq:6} is uniformly bounded in $j$. This shows that $a_j \# \chi = a_j \chi \in S$, hence $2^{-jm} \Op_h(a) \chi(2^{-j}h D)$ belongs to $\Psi_{2^{-j}h}$ with symbol $a_j \chi$. The operator $2^{-jm} \chi(2^{-j}h D)\Op_h(a)$ is the adjoint of $2^{-jm} \Op_h(a^*)\chi(2^{-j}h D)$, thus it also belongs to $\Psi_{2^{-j}h}$. By the composition formula for symbols of semiclassical operators, its semiclassical symbol is equal to  $a_j \chi + 2^{-j}h \cdot S$.
\end{proof}

A direct application of this result shows that $\AAA_j$ belongs to $\Psi_{2^{-j}h}$. In addition, $\AAA_0 \in h^\infty \Psi^{-\infty}_h$, which implies immediately $|\AAA_0 w| \leq O(h) |w|_{\tH^{3/5}_h}$. We obtain in the next steps estimates on $|\AAA_j w|$ for $j \geq 1$.

\textbf{Step 3.} We start with a simple result:

\begin{lem}\label{lem:2} There exist functions $\Phi \in C^\infty_b(\R)$ and $\phi \in C_0^\infty(\R)$ such that $\phi(0) = 1$, $\phi \geq 0$ and $\phi^2 = (\Phi^2)'$.
\end{lem}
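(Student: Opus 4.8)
The plan is to construct $\Phi$ and $\phi$ explicitly rather than abstractly. The model is to take $\Phi$ to be a constant multiple of a smooth nondecreasing ``sigmoid'' $\tau$ that vanishes near $-\infty$ and is constant near $+\infty$, and then to set $\phi\de\sqrt{(\Phi^2)'}$; the requirements $\phi\ge0$, $\supp\phi$ compact, and $\phi(0)=1$ are then easy, and the whole difficulty is to arrange that $\phi$ be smooth at the two points where $\tau$ starts and stops being constant. The first step is to record an elementary building block: for $m>0$, the function $g_m$ with $g_m(x)=e^{-m/x}$ for $x>0$ and $g_m(x)=0$ for $x\le0$ is smooth on $\R$, and --- the only slightly nonobvious point --- $x\mapsto x^{-1}g_m(x)$ \emph{also} extends to a smooth function on $\R$, because $x^{-1}g_m(x)=m^{-1}x\,g_m'(x)$ is the product of the monomial $x$ with the smooth function $g_m'$; in the same way $x^{-2}g_m(x)=m^{-1}g_m'(x)$ is smooth, and $\sqrt{g_m}=g_{m/2}$.

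Next I would fix a smooth nondecreasing $\tau:\R\to[0,1]$ with $\tau\equiv0$ on $(-\infty,-1]$, $\tau\equiv1$ on $[1,\infty)$, $\tau'>0$ on $(-1,1)$, and --- this is the crucial feature --- with germs $\tau(x)=g_1(x+1)$ near $x=-1$ and $\tau(x)=1-g_1(1-x)$ near $x=1$; such a $\tau$ exists by a routine patching of these two germs into any standard sigmoid (both germs are flat at the respective endpoint, so they glue smoothly with the constant pieces, and the bulk interpolation keeping $\tau'>0$ is elementary). I would then put $\Phi\de\lambda\tau$ with $\lambda\de(2\tau(0)\tau'(0))^{-1/2}>0$ and $\phi\de\sqrt{(\Phi^2)'}=\lambda\sqrt{2\,\tau\tau'}$. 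Then $\Phi\in C^\infty_b(\R)$ since $\Phi$ is smooth with $0\le\Phi\le\lambda$; the function $\phi$ is $\ge0$, supported in $[-1,1]$, satisfies $\phi(0)^2=2\lambda^2\tau(0)\tau'(0)=1$, and obeys $\phi^2=2\lambda^2\tau\tau'=(\Phi^2)'$. So everything reduces to the smoothness of $\phi$.

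The main obstacle is precisely that last point: at $x=\pm1$ the function $\tau\tau'$ vanishes to infinite order, and the square root of a smooth nonnegative function need not be smooth in general. On $(-1,1)$ there is nothing to prove because $\tau\tau'>0$ is smooth, and off $[-1,1]$ one has $\phi\equiv0$; so only the behaviour at $\pm1$ requires the special germs. Near $x=-1$ one computes $\tau\tau'=g_1(x+1)g_1'(x+1)=(x+1)^{-2}g_1(x+1)^2$, hence $\phi=\lambda\sqrt2\,(x+1)^{-1}g_1(x+1)$, which by the building block (with $m=1$) is smooth on $\R$ and flat at $-1$. Near $x=1$ one gets $\phi=\lambda\sqrt2\,\sqrt{1-g_1(1-x)}\;(1-x)^{-1}g_{1/2}(1-x)$: the first factor is smooth and positive near $1$ since $g_1(1-x)\to0$ there, and the second is smooth and flat at $1$ by the building block (with $m=\tfrac12$). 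Matching these flat germs with the identically vanishing pieces gives $\phi\in C_0^\infty(\R)$ and finishes the proof. I expect the only genuinely delicate ingredient to be the smoothness of $x^{-1}g_m(x)$, which is exactly what dictates the choice of germs for $\tau$; the rest is bookkeeping.
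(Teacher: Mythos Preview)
Your proof is correct and follows essentially the same approach as the paper: build $\Phi$ as a rescaled smooth sigmoid with the explicit germ $e^{-1/(x+1)}$ at the left endpoint, set $\phi=\sqrt{(\Phi^2)'}$, and verify that this square root is smooth by computing it explicitly near the endpoints. If anything, your argument is more careful than the paper's, which only writes out the germ at $x=-1$ and asserts smoothness at the right endpoint without specifying the germ there; your symmetric treatment with $\tau(x)=1-g_1(1-x)$ near $x=1$ fills that gap.
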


\begin{proof} It is enough to construct $\phi$ with $\phi(0) > 0$ then to multiply $\Phi, \phi$ by a suitable multiplicative constant. Let $\Phi$ be a smooth non-decreasing function with
\begin{equation*}
\Phi(x) = \systeme{0 \ \ \ \ \ \ \ \text{ if } x \leq -1, \\
 e^{-(x+1)^{-1}} \ \text{ if } x \in [-1,0], \\
 1 \ \ \ \ \ \text{ if } x \geq 1.}
\end{equation*}
If $\phi$ is the non-negative root of $(\Phi^2)'$ then $\phi$ has compact support and $\phi(0) > 0$. Since the $s \in [0,\infty) \mapsto \sqrt{s}$ is smooth everywhere but at $0$, $\phi$ is smooth everywhere but possibly at $-1$. But 
\begin{equation*}
\phi(x) = \systeme{0 \ \ \ \ \ \ \ \ \ \ \ \ \ \ \ \ \ \ \ \ \ \ \ \text{ if } x \leq -1, \\
2^{1/2} (x+1)^{-1}e^{-(x+1)^{-1}} \text{ if } x \in [-1,0],}
\end{equation*}
which is smooth at $x=-1$.\end{proof}

Let $\Phi, \phi$ be given by Lemma \ref{lem:2}. Let $h_j \de h^{2/3} 2^{-j/3}$ and consider the operator $\Phi(h_j D_\te)$. This operator belongs to $\Psi_{h_j}$ with semiclassical symbol $\Phi(\xi_\te)$. Below we show an estimate on $|\AAA_j w|$, by splitting it into two parts, $|\phi(h_j D_\te) w|$ and $|(\Id-\phi(h_jD_\te))w|$.

\textbf{Step 4.} In order to estimate $|\phi(h_j D_\te) w|$ we use a positive commutator argument and the sharp G$\mathring{\text{a}}$rding inequality. Observing that $\sigma_{\tH_m}(x,\xi) > c|\xi|_g$ on $W_{x_0,\xi_0} \cap T^*O^*\Mm$, the principal symbol $\sigma_{\HH_1}$ of $\frac{1}{i} \HH_1$ satisfies
\begin{equations*}
\{ \xi_\te, \sigma_{\HH_1} \}(x,\xi) > c |\xi|_g \ \ \text{ on } \Gamma^{-1}(W_{x_0,\xi_0}) \cap T^*O^*\Mm.
\end{equations*}
Recall that $\PP = h^2 \LL_O^V + h \HH_1$. Similarly to \cite[Equation (2.47)]{L},
\begin{equations*}
\Re(\lr{\PP \AAA_jw,\Phi(h_jD_\te)^2 \AAA_jw}) \\
 = \Re(\lr{h^2 \Phi(h_jD_\te) \LL_O^V \AAA_j w, \Phi(h_jD_\te) \AAA_j w}) + \Re(\lr{h\HH_1 \AAA_jw, \Phi(h_jD_\te)^2 \AAA_jw}).
\end{equations*}
We study the first term. We observe that $\LL_O^V \AAA_j = \LL_O^V \tchi_j(hD) \cdot \AAA_j$. Lemma \ref{lem:1} shows that both the operators $2^{-2j} h^2 \LL_O^V \tchi_j(hD)$ and $\AAA_j$ belong to $\Psi_{2^{-j} h}$. Since $2^{-j} h \leq h_j = h^{2/3} 2^{-j/3}$, they \textit{a fortiori} belong to $\Psi_{h_j}$. In addition, $D_\te$ and $\LL_O^V$ commute on $\UU$ and $\AAA$ has wavefront set contained in $T\UU$. The asymptotic expansion formula for composition of pseudodifferential operators \cite[Theorem 4.14]{Z} show that 
\begin{equation*}
h^2 \Phi(h_jD_\te) \LL_O^V \AAA_j = h^2 \LL_O^V \Phi(h_jD_\te) \AAA_j + h_j^\infty \Psi_{h_j}.
\end{equation*}
Using that $\LL_V^O = -\sum_{k,\ell} \VV_{k\ell}^2 \geq 0$ we get $\Re(\lr{h^2 \Phi(h_jD_\te) \LL_O^V \AAA_j w, \Phi(h_jD_\te) \AAA_j w}) =$
\begin{equation*}
\lr{h^2 \LL_O^V  \Phi(h_jD_\te) \AAA_j w, \Phi(h_jD_\te) \AAA_j w} + O(h_j^\infty)|w| \geq O(h_j^\infty)|w|.
\end{equation*}
We next focus on the term $\Re(\lr{h\HH_1 \AAA_jw, \Phi(h_jD_\te)^2 \AAA_jw})$. Since $h\HH_1$ is anti-selfadjoint, it is equal to $\Re(\lr{[h\HH_1,\Phi(h_jD_\te)]\AAA_jw,\Phi(h_jD_\te) \AAA_jw})$. The real part of the operator $\Phi(h_jD_\te)[h\HH_1,\Phi(h_jD_\te)]$ is equal to $\frac{1}{2}[h\HH_1,\Phi(h_jD_\te)^2]$. We obtain
\begin{equation}\label{eq:2}
\Re(\lr{\PP \AAA_jw,\Phi(h_jD_\te)^2 \AAA_jw}) \geq \frac{1}{2}\lr{\AAA_j^*[h\HH_1,\Phi(h_jD_\te)^2]\AAA_j w, w} + O(h_j^\infty)|w|.
\end{equation} 

We now study the commutator term $\EE_j \de 2^{-j} \AAA_j^*[h\HH_1,\Phi(h_jD_\te)^2]\AAA_j$. We claim that it belongs to $\Psi_{h_j}$. To show this claim we fix $\tchi \in C_0^\infty(\R^{d(d+1)/2} \setminus 0)$ equal to $1$ near $\supp(\chi)$ and we write
\begin{equations*}
\EE_j = 2 \Re \left( (\AAA \tchi_j(hD)) \cdot (2^{-j}\chi_j(hD) h \HH_1) \cdot \Phi(h_j D_\te)^2 \cdot \AAA_j \right).
\end{equations*}
By Lemma \ref{lem:1}, the operators $\AAA \tchi_j(hD)$, $2^{-j}\chi_j(hD) h \HH_1$ and $\AAA_j$ belong to $\Psi_{2^{-j}h}$. Since $2^{-j} h \leq h_j = h^{2/3} 2^{-j/3}$, they also belong to $\Psi_{h_j}$. The operator $\Phi(h_j D_\te)$ has symbol equal to $\Phi(\xi_\te)$ in the $h_j$-quantization and the composition theorem for semiclassical operators shows that $\EE_j \in \Psi_{h_j}$. 

The semiclassical symbols of $\AAA_j$ and $2^{-j} h \HH_1$ are given modulo $O(h_j) S$ by
\begin{equation*}
a(x,h^{1/3} 2^{j/3} \xi) \chi(h^{1/3} 2^{-2j/3} \xi), \ \ \ \ 2^{-2j/3}h^{1/3} \sigma_{\HH_1},
\end{equation*}
where $a$ is the semiclassical symbol of $\AAA$ in the $h$-quantization. By the composition formula for symbols of semiclassical operators \cite[Theorem 4.14]{Z}, the semiclassical symbol $\sigma_{\EE_j}$ of $\EE_j$ in the $h_j$-quantization is  given modulo $O(h_j^2)S$ by
\begin{equations}\label{eq:1}
\chi(h^{1/3} 2^{-2j/3} \xi)^2 |a(x,h^{1/3} 2^{j/3} \xi)|^2 \cdot \dfrac{h_j}{i} \{ \Phi(\xi_\te)^2, 2^{-2j/3}h^{1/3} \sigma_{\HH_1}\} \\
 = \chi(h^{1/3} 2^{-2j/3} \xi)^2 |a(x,h^{1/3} 2^{j/3} \xi)|^2 \phi(\xi_\te)^2 \cdot 2^{-j}h \{ \xi_\te, \sigma_{\HH_1} \}. 
\end{equations}
The wavefront set of $\AAA$ (hence the support of $a$) is contained in $\Gamma^{-1}(W_{x_0,\xi_0})$ itself contained in the conical set $\overline{\{ \{ \xi_\te, \sigma_{\HH_1} \} \geq c|\xi| \}}$, and $|\xi| \geq c h^{-1/3} 2^{2j/3}$ whenever $\chi(h^{1/3} 2^{-2j/3} \xi) \neq 0$. It follows that 
\begin{equation*}
\sigma_{\EE_j} \geq \chi(h^{1/3} 2^{-2j/3} \xi)^2 |a(x,h^{1/3} 2^{j/3} \xi)|^2 \phi(\xi_\te)^2 \cdot c h^{2/3} 2^{-j/3}.
\end{equation*}
The sharp G$\mathring{\text{a}}$rding inequality \cite[Theorem 4.32]{Z} implies
\begin{equation*}
\lr{\EE_j w, w} \geq c h^{2/3} 2^{-j/3} |\phi(h_jD_\te) \AAA_j w|^2 + O(h_j^2) |w|^2.
\end{equation*}
Since $h_j = h^{2/3} 2^{-j/3}$ and $\EE_j = 2^{-j}\AAA_j^*[h\HH_1,\Phi(h_jD_\te)^2]\AAA_j$ this yields
\begin{equation*}
\lr{\AAA_j^*[h\HH_1,\Phi(h_jD_\te)^2]\AAA_j w, w} \geq c h^{2/3} 2^{2j/3} |\phi(h_jD_\te) \AAA_j w|^2 + O(h^{4/3} 2^{j/3}) |w|^2.
\end{equation*}
Therefore we can come back to \eqref{eq:2} and obtain
\begin{equation*}
\Re(\lr{\PP \AAA_jw,\Phi(h_jD_\te)^2 \AAA_jw}) \geq c h^{2/3} 2^{2j/3} |\phi(h_jD_\te) \AAA_j w|^2 + O(h^{4/3} 2^{j/3}) |w|^2.
\end{equation*}
Since $\Phi$ is uniformly bounded, the operator $\Phi(h_jD_\te)^2$ is bounded on $L^2$. This gives the estimate on $|\phi(h_jD_\te) \AAA_j w|$:
\begin{equation*}
h^{2/3} 2^{2j/3} |\phi(h_jD_\te) \AAA_j w|^2  \leq C|\PP \AAA_jw||\AAA_jw| +  O(h^{4/3} 2^{j/3}) |w|^2.
\end{equation*}

\textbf{Step 5.} The estimate on $|(\Id-\phi(h_j D_\te)) w|$ follows from the spectral theorem. Since $\phi(0) = 1$ there exists a smooth bounded function $\varphi$ such that $1-\phi(t) = t\varphi(t)$. The operator $\varphi(h_jD_\te)$ is uniformly bounded on $L^2$ hence 
\begin{equation}\label{eq:7n}
h^{2/3} 2^{2j/3} |(\Id-\phi(h_j D_\te)) \AAA_j w|^2 = h^{2/3} 2^{2j/3} |\varphi(h_jD_\te) h_jD_\te \AAA_jw|^2 \leq C |h D_\te \AAA_jw|^2.
\end{equation}
We recall that $\p_\te = \VV_{1m}$ and that $\LL_O^V = -\sum_{k,\ell} \VV_{k\ell}^2 \geq -\VV_{1m}^2$; hence
\begin{equations*}
h^{2/3} 2^{2j/3} |(\Id-\phi(h_j D_\te)) \AAA_j w|^2 \leq C \lr{\LL_O^V \AAA_jw, \AAA_j w} \leq C|\PP \AAA_jw| |\AAA_jw|.
\end{equations*}

\textbf{Step 6.} Combining the results of the steps 4 and 5, we obtain the estimate 
\begin{equation*}
h^{2/3} 2^{2j/3}|\AAA_j w|^2 \leq C|\PP \AAA_j w| |\AAA_j w|+O(h^{4/3}2^{j/3}) |w|^2.
\end{equation*}
Let $\tchi \in C_0^\infty(\R^{d(d+1)/2} \setminus 0)$ equal to $1$ on $\supp(\chi)$. We apply the above estimate to $\tchi_j(hD) w$ and we observe that both $\AAA_j$ and $\Id - \tchi_j(hD)$ belong to $\Psi_{2^{-j}h}$ and that their symbols have disjoint supports; therefore $|\AAA_j (\Id - \tchi_j(hD))w| = O(h^\infty 2^{-j\infty})|w|$ by the composition theorem. Similarly by Lemma \ref{lem:1}, $2^{-2j}\PP \AAA_j$ belongs to $\Psi_{2^{-j}h}$ and its symbol has disjoint support from the one of $\Id - \tchi_j(hD)$; therefore $2^{-2j}|\PP \AAA_j (\Id - \tchi_j(hD))w| = O(h^\infty2^{-j\infty})|w|$. It follows that 
\begin{equation*}
h^{2/3} 2^{2j/3}|\AAA_j w|^2 \leq C|\PP \AAA_j w| |\AAA_jw|+O(h^{4/3}2^{j/3}) |\tchi_j(hD) w|^2 + O(h^\infty 2^{-j\infty})|w|^2.
\end{equation*}
The inequality $ab \leq a^2 + b^2$ and the identity $\AAA_j = \chi_j(hD) \AAA$ shows that
\begin{equations}\label{eq:3}
h^{4/3} 2^{4j/3}|\chi_j(hD) \AAA w|^2 \leq C|\PP \AAA_j w|^2+ O(h^2 2^j) |\tchi_j(hD) w|^2 + O(h^\infty 2^{-j\infty})|w|^2\\
\leq C|\chi_j(hD) \PP \AAA w|^2+ C |[\PP ,\chi_j(hD)]\AAA w|^2 + O(h^2 2^j) |\tchi_j(hD) w|^2 + O(2^{-j}h^2)|w|^2.
\end{equations}

\textbf{Step 7.} To conclude we show the commutator term $|[\PP ,\chi_j(hD)]\AAA w|$ in the right hand side of \eqref{eq:3} is negligible. Recall that $\PP = -h^2 \sum_{k,\ell} \VV_{k\ell}^2 + h \HH_1$ and write
\begin{equation*}
[\PP ,\chi_j(hD)] =  [h\HH_1,\chi_j(hD)]-\sum_{k,\ell} 2h \VV_{k\ell} [h\VV_{k\ell}, \chi_j(hD)] + [h\VV_{k\ell}, [h\VV_{k\ell}, \chi_j(hD)]].
\end{equation*}

We first control the term $|[h\HH_1,\chi_j(hD)]\AAA w|$. We can write
\begin{equations*}
2^{-j/2}[h\HH_1,\chi_j(hD)]\lr{hD}^{-1/2} \\ = 2^{-j} h\HH_1 \tchi_j(hD)\cdot 2^{j/2} \chi_j(hD) \lr{hD}^{-1/2} -  2^{j/2}  \chi_j(hD)\lr{hD}^{-1/2} \cdot 2^{-j} h\HH_1 \tchi_j(hD).
\end{equations*}
By Lemma \ref{lem:1}, both $2^{-j} h\HH_1 \tchi_j(hD)$ and $2^{j/2}  \chi_j(hD)\lr{hD}^{-1/2}$ belong to $\Psi_{2^{-j}h}$. It follows that the operator $2^{-j/2} [h\HH_1, \chi_j(hD)] \lr{hD}^{-1/2}$ belongs to $\Psi_{2^{-j} h}$. Its symbol in the $2^{-j}h$-quantization is given by the asymptotic formula and has vanishing leading term; therefore $2^{-j/2} [h\HH_1, \chi_j(hD)] \lr{hD}^{-1/2}$ belongs to $2^{-j} h \Psi_{2^{-j}h}$. As such it is bounded on $L^2$ with norm $O(2^{-j} h)$. This yields
\begin{equation}\label{eq:0d}
|[h\HH_1, \chi_j(hD)] \AAA w| = O(2^{-j/2} h) |w|_{\tH^{1/2}_h} = O(2^{-j/2} h) |w|_{\tH^{3/5}_h}.
\end{equation}

By arguments similar to the one needed to show \eqref{eq:0d}, $[h\VV_{k\ell}, [h\VV_{k\ell}, \chi_j(hD)]] \lr{hD}^{-1/2}$ belongs to $2^{-j/2} h^2 \Psi_{2^{-j} h}$ and 
\begin{equation}\label{eq:0y}
|[h\VV_{k\ell}, [h\VV_{k\ell}, \chi_j(hD)]] \AAA w| = O(2^{-j/2} h^2) |w|_{\tH^{1/2}_h}= O(2^{-j/2} h^2) |w|_{\tH^{3/5}_h}.
\end{equation}

The term $h \VV_{k\ell} [h\VV_{k\ell}, \chi_j(hD)]$ requires some extra work. Fix $j,k,\ell$ and define $B = [h\VV_{k\ell}, \chi_j(hD)]$. Then,
\begin{equation*}
|h \VV_{k\ell} B \AAA w|^2 \leq \Re(\lr{\PP B \AAA w, B \AAA w}) = \Re(\lr{\PP \AAA w, B^*B \AAA w}) + \Re(\lr{[P,B]\AAA w,B\AAA w}).
\end{equation*}
By the same arguments as needed to show \eqref{eq:0d}, the operator $B^*B \lr{hD}^{-1/2}$ belongs to $2^{-j/2} h^2 \Psi_{2^{-j}h}$. Hence $\Re(\lr{\PP \AAA w, B^*B \AAA w}) = O(2^{-j/2}h^2) |\PP \AAA w| |w|_{\tH^{1/2}_h}$. On the other hand the operator $\lr{hD}^{-3/5}[P,B] \lr{hD}^{-3/5}$ belongs to $2^{-j/5} h^2 \Psi_{h_j}$ and this implies that $\Re(\lr{[P,B]\AAA w,B \AAA w}) = O(2^{-j/5} h^3) |w|_{\tH^{3/5}_h}^2$. Combining all these estimates together we obtain that
\begin{equation}\label{eq:0x}
|h \VV_{k\ell} B \AAA w|^2 = O(2^{-j/5}h^3) |w|^2_{\tH^{3/5}_h} + O(2^{-j/2}h^2) |\PP \AAA w| |w|_{\tH^{3/5}_h}.
\end{equation}

We plug \eqref{eq:0d}, \eqref{eq:0y}, \eqref{eq:0x} in \eqref{eq:3} to obtain the estimate $h^{4/3} 2^{4j/3}|\chi_j(hD) \AAA w|^2 $
\begin{equation*}
\leq C|\chi_j(hD) \PP \AAA w|^2+ O(h^2 2^j) |\tchi_j(hD) w|^2 + O(2^{-j/5} h^3) |w|_{\tH^{3/5}_h}^2 + O(2^{-j/2}h^2) |\PP \AAA w| |w|_{\tH^{3/5}_h}.
\end{equation*}
Summation over $j$ allows us to conclude thanks to \cite[Equation (9.3.29)]{Z}:
\begin{equation*}
h^{4/3} |\AAA w|_{\tH^{2/3}_h}^2 \leq C |\PP \AAA w|^2 + O(h^2)|w|_{\tH^{1/2}_h}^2 + O(h^3)|w|_{\tH^{3/5}_h} + O(h^2) |\PP \AAA w||w|_{\tH^{3/5}_h}.
\end{equation*}
This implies \eqref{eq:7}, hence the proof is over.\end{proof}

\section{Subelliptic estimates in Anisotropic Sobolev spaces}

\subsection{Anisotropic Sobolev spaces} To define Pollicott--Ruelle resonances as eigenvalues we need to change the spaces on which $ H_1 $ acts. These spaces originally appeared as anisotropic Sobolev spaces in Baladi \cite{Ba}, Liverani \cite{Liv}, Gou\"ezel--Liverani \cite{GoLi}, Baladi--Tsuji \cite{BaTs}. We follow a microlocal approach due to Faure--Sj\"ostrand \cite{FS} in a version given by Dyatlov--Zworski \cite{DZ1}. It allows the use of PDE methods in the study of the Pollicott--Ruelle spectrum.

For $s,r \in \R$, let $G_{r,s}(h) \in \Psi_h^{0+}$ with principal symbol $\sigma_{G_{r,s}}$ given by
\begin{equation}\label{Eq:3c}
\sigma_{G_{r,s}}(x,\xi) \de  (sm(x,\xi) + r)\rho_0(|\xi|_g) \log(|\xi|_g),
\end{equation}
where $\rho_0 \in C^\infty(\R,[0,1])$ vanishes on $[-1,1]$ and is equal to $1$ on $\R \setminus [-2,2]$ and $m \in C^\infty(T^*S^*\Mm \setminus 0,[-1,1])$ is homogeneous of degree $0$ with
\begin{equation*}
\systeme{ m(x,\xi) = 1 \ \ \text{ near } E_s^* \\ m(x,\xi) = -1 \text{ near } E_u^*
 } \ \ \text{and } \{m,\sigma_{H_1}\} \geq 0. 
\end{equation*}
The existence of $m$ is proved in \cite[Lemma 3.1]{DZ1}. For every $s, r \geq 0$, the operator $e^{G_{r,s}(h)}$ belongs to $\Psi_h^{s+r+}$ and the semiclassical spaces of \cite{DZ1} are defined as $H_h^{r,s} \de e^{-G_{r,s}(h)} L^2$. In particular functions in $H_h^{r,s}$ are in $H^{r+s}_h$ microlocally near $E_s^*$  and in $H^{r-s}_h$ microlocally near $E_u^*$:
\begin{equations}\label{Eq:6d}
A \in \Psi_h^0, \ \WF_h(A) \text{ sufficiently close to } E_s^* \ \Rightarrow \ 
|Au|_{H^{r+s}_h} \leq C|u|_{H_h^{r,s}}, \\ 
A \in \Psi_h^0, \ \WF_h(A) \text{ sufficiently close to } E_u^* \ \Rightarrow \ 
|Au|_{H^{r-s}_h} \leq C|u|_{H_h^{r,s}}.
\end{equations}
In addition if $r,s \in \R$ are fixed and $h > 0$ varies the spaces $H_h^{r,s}$ are equal and there exists a constant $C$ such that
\begin{equation}\label{eq:0j}
C^{-1} h^{|s|+|r|} |u|_{H_1^{r,s}} \leq |u|_{H_h^{r,s}} \leq C h^{-|s|-|r|}|u|_{H_1^{r,s}}.
\end{equation}

\subsection{High frequency estimate in $H_1^{r,s}$} The first result of this section extends the $L^2$-based hypoelliptic estimate of Theorem \ref{thm:1} to anisotropic Sobolev spaces:  

\begin{proposition}\label{prop:1d} For every $R,N \geq 0$ and $r,s \in \R$, $\rho_1, \rho_2$ satisfying \eqref{eq:0g}, there exist $C_{R,N,r,s} > 0$ and  $\epsi_0 >0$ such that for every $0 < \epsi \leq \epsi_0$ and $|\lambda| \leq R$,
\begin{equation}\label{eq:1w}
|\rho_1(\epsi^2\Delta) \epsi^2 \Delta_\Ss u|_{H_1^{r,s}} \leq C_{R,N,r,s}  |\rho_2(\epsi^2\Delta) \epsi(P_\epsi-\lambda) u|_{H_1^{r,s}}+ O(\epsi^N) |u|_{H_1^{r,s}}.
\end{equation}
\end{proposition}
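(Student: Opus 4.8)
The plan is to conjugate the $L^2$-estimate of Theorem \ref{thm:1} by the anisotropic weight $e^{G_{r,s}(h)}$ and to absorb the commutator errors that appear. Set $h = \epsi$ and write $G = G_{r,s}(h) \in \Psi_h^{0+}$. The key algebraic point is that $G$ acts in the \emph{base} directions only: its symbol depends on $(x,\xi)$ through $|\xi|_g$ and $m(x,\xi)$, and in particular $G$ commutes with $\epsi^2\Delta$ up to lower order. More precisely, since $\rho_1(\epsi^2\Delta), \rho_2(\epsi^2\Delta)$ are functions of $\epsi^2\Delta$ and $e^G$ is a pseudodifferential operator whose principal symbol is a function of the principal symbol of $\epsi^2\Delta$, we have $[e^G, \rho_i(\epsi^2\Delta)] \in h\Psi_h^{0+}$; likewise $[e^G, \epsi^2\Delta_\Ss]$ and $[e^G, \epsi H_1]$ are one order lower than the leading term. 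The estimate \eqref{eq:1w} is, by definition of $H_1^{r,s} = e^{-G}L^2$ and of the $\epsi$-dependent norms (using \eqref{eq:0j} to pass between $H_1^{r,s}$ and $H_h^{r,s}$), equivalent to an $L^2$-estimate for the conjugated operators $e^G \rho_1(\epsi^2\Delta)\epsi^2\Delta_\Ss e^{-G}$ applied to $w = e^G u$.

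First I would apply Theorem \ref{thm:1} to the function $w$ in place of $u$. This controls $\epsi^{2/3}|\rho_1(\epsi^2\Delta)w|_{H^{2/3}_\epsi}$, $\epsi^{1/3}\sum_j|\epsi X_j\rho_1(\epsi^2\Delta)w|_{H^{1/3}_\epsi}$, and $|\rho_1(\epsi^2\Delta)\epsi^2\Delta_\Ss w|$ by $C|\rho_2(\epsi^2\Delta)\epsi(P_\epsi-\lambda)w| + O(\epsi^N)|w|$. Next I would commute $e^{-G}$ through: write $\rho_1(\epsi^2\Delta)\epsi^2\Delta_\Ss w = \rho_1(\epsi^2\Delta)\epsi^2\Delta_\Ss e^G u = e^G\big(\rho_1(\epsi^2\Delta)\epsi^2\Delta_\Ss u\big) + [\text{commutator}]u$, and similarly on the right-hand side $\rho_2(\epsi^2\Delta)\epsi(P_\epsi-\lambda)e^G u = e^G \rho_2(\epsi^2\Delta)\epsi(P_\epsi-\lambda)u + [\text{commutator}]u$. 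Taking $L^2$-norms and using $|e^G\cdot|_{L^2}\simeq |\cdot|_{H_h^{r,s}}$ converts the main terms into the $H_1^{r,s}$-norms appearing in \eqref{eq:1w} (up to the fixed powers of $\epsi$ from \eqref{eq:0j}, which are harmless since they are $\epsi$-independent constants once $r,s$ are fixed — here I would be slightly careful and keep track that Theorem \ref{thm:1} already carries the right powers of $\epsi$ so that the $H^{2/3}_\epsi$, $H^{1/3}_\epsi$ terms on the left have enough positive powers of $\epsi$ to dominate the commutator losses).

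The main obstacle is the commutator $[\,e^G, \epsi H_1\,]$ coming from the $\frac1i H_1$ part of $\epsi(P_\epsi-\lambda)$. Unlike $[e^G,\epsi^2\Delta_\Ss]$, which is genuinely lower order because $\Delta_\Ss$ differentiates only in the vertical (fiber) directions while $G$ is radial, the bracket with $H_1$ is precisely where the anisotropy interacts with the dynamics: one has $\epsi[e^G, H_1]e^{-G} = \epsi\,\Op_h(\{G,\sigma_{H_1}\}) + O(\epsi^2\Psi_h^{0+})$, and $\{G,\sigma_{H_1}\} = (s\{m,\sigma_{H_1}\} + \dots)\rho_0\log|\xi|_g$, which is of order $0+$, not $-1$. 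To handle this I would split into the region of momentum frequencies $\gtrsim \epsi^{-1}$ and $\lesssim \epsi^{-1}$ via $\rho_1, \rho_2$: on the support of $\rho_1$ the estimates of Theorem \ref{thm:1} give control with a genuine gain of $\epsi^{2/3}$ (resp. $\epsi^{1/3}$) in semiclassical Sobolev norms, which beats the $O(\epsi^{0+})$ loss from the $H_1$-commutator once we note that $\{m,\sigma_{H_1}\}\ge 0$ has the right sign so that the contribution from the radial-source/sink regions can only \emph{help} (this is exactly the point of Melrose's radial estimates and the reason $m$ was chosen with $\{m,\sigma_{H_1}\}\ge0$). Concretely: the dangerous commutator term, paired with $w$, produces $\Re\langle \Op_h(\{G,\sigma_{H_1}\})w, w\rangle$ up to lower order, and $\{G,\sigma_{H_1}\}\ge 0$ modulo a compactly-supported-in-$\xi$ term, so the sharp G\r{a}rding inequality bounds it below by $-O(\epsi)|w|^2_{H^{0+}_\epsi}$, which is absorbed. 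Iterating the resulting estimate finitely many times (as in the proof of Theorem \ref{thm:1} from Proposition \ref{prop:1}) removes the remaining $O(\epsi)$-type error terms down to $O(\epsi^N)$, and translating back through $e^{-G}$ and \eqref{eq:0j} gives \eqref{eq:1w}.
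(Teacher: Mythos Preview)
Your overall strategy---conjugate by $e^{G_{r,s}(\epsi)}$ and reduce to the $L^2$ estimate of Theorem~\ref{thm:1}---is exactly the paper's, and you correctly isolate the real difficulty: the commutator coming from $\epsi H_1$ is only in $\epsi\,\Psi_\epsi^{0+}$, not genuinely lower order. But your proposed fix for this commutator does not work.

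You attempt to exploit the sign $\{m,\sigma_{H_1}\}\ge 0$ via sharp G\aa rding. That sign condition is \emph{not} used anywhere in the proof of Proposition~\ref{prop:1d}; it enters only later, in Proposition~\ref{prop:3}, where one is proving a lower bound on a quadratic form (the imaginary part of $[\tP_\epsi(\lambda)]_{r,s}$). Here the situation is different: after applying Theorem~\ref{thm:1} to $w=e^{G}u$, you must bound the $L^2$-norm $|\rho_2(\epsi^2\Delta)\epsi(P_\epsi-\lambda)w|$ by $|[\rho_2(\epsi^2\Delta)\epsi(P_\epsi-\lambda)]_{r,s}\,w|$ plus small errors. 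The discrepancy is an $L^2$-norm of an operator in $\epsi\,\Psi_\epsi^{0+}$ applied to $w$, not a pairing $\langle \cdot\,,w\rangle$. There is no quadratic form in sight, so a G\aa rding lower bound on $\Re\langle\Op_\epsi(\{G,\sigma_{H_1}\})w,w\rangle$ says nothing about $|[\epsi H_1,e^G]e^{-G}w|$. Your argument therefore does not close.

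The mechanism the paper uses instead is to feed the commutator back into Theorem~\ref{thm:1} and gain a power of $\epsi$. Writing $\Delta_\Ss=-\sum_j X_j^2$ and using \eqref{eq:3b}, one has
\[
B_0\,\epsi(P_\epsi-\lambda)-B_0\,\epsi[P_\epsi-\lambda]_{r,s}\ \in\ \epsi\sum_j \epsi X_j\cdot\Psi_\epsi^{m+}\ +\ \epsi\,\Psi_\epsi^{m+},
\]
where the second block absorbs the $[G,\epsi H_1]$ contribution. Now factor $\Psi_\epsi^{m+}=\Lambda_{1/3}\cdot\Psi_\epsi^{m-1/4}$ and $\Psi_\epsi^{m+}=\Lambda_{2/3}\cdot\Psi_\epsi^{m-1/4}$ and apply the \emph{full} left-hand side of Theorem~\ref{thm:1}: the terms $\epsi^{1/3}|\epsi X_j\,\cdot\,|_{H^{1/3}_\epsi}$ and $\epsi^{2/3}|\,\cdot\,|_{H^{2/3}_\epsi}$ convert the commutator into $O(\epsi^{1/3})|B_1\,\epsi(P_\epsi-\lambda)v|$ with $B_1\in\Psi_\epsi^{m-1/4}$. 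This is Lemma~\ref{lem:8}. Iterating it (each step gains $\epsi^{1/3}$ and $1/4$ in smoothing) reduces the error to $O(\epsi^N)|v|$. The same factoring handles the left-hand side commutator, see \eqref{eq:3c}--\eqref{eq:7da}. In short: the commutator with $H_1$ is controlled not by its sign but by the extra $2/3$-derivative gain in Theorem~\ref{thm:1}, which is why the statement of that theorem carries all three terms on the left and not just $|\rho_1\epsi^2\Delta_\Ss u|$.
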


\begin{proof} First observe that as in \cite[Equation (4.4)]{DZ2}, if $B$ is a semiclassical pseudodifferential operator then
\begin{equation*}
\WF_\epsi(B) \subset \oT^*S^*\Mm \setminus 0 \ \Rightarrow \ (e^{G_{r,s}(1)} - e^{G_{r,s}(\epsi)}) B \in \epsi^\infty \Psi^{-\infty}_\epsi.
\end{equation*}
Since $\rho_1(\epsi^2 \Delta), \rho_2(\epsi^2 \Delta)$ are microlocalized away from the zero section  and because of \eqref{eq:0j}, the proposition will follow from the bound
\begin{equation}\label{eq:3a}
|\rho_1(\epsi^2\Delta) \epsi^2 \Delta_\Ss u|_{H_\epsi^{r,s}} \leq C |\rho_2(\epsi^2\Delta) \epsi (P_\epsi-\lambda) u|_{H_\epsi^{r,s}}+ O(\epsi^N) |u|_{H_\epsi^{r,s}}.
\end{equation}
Below we conjugate the operators involved in \eqref{eq:3a} with $e^{G_{r,s}(\epsi)}$ and show a $L^2$-based estimate equivalent to \eqref{eq:3a}. 

For $A \in \Psi_\epsi^m$, let $[A]_{r,s}$ be the operator $e^{G_{r,s}(\epsi)} A e^{-G_{r,s}(\epsi)}$. We have
\begin{equation}\label{eq:3b}
[A]_{r,s} = A + [G_{r,s}(\epsi),A] + \epsi^2 \Psi_\epsi^{m-2+},
\end{equation}
see the equation \cite[(3.11)]{DDZ} and the discussion following it. For $\rho_1, \rho_2$ satisfying \eqref{eq:0g}, let $\trho_1, \trho_2$ be smooth functions satisfying \eqref{eq:0g}, with $\trho_1 = 1$ on $\supp(\rho_1)$ and $\trho_2 = 0$ on $\{\rho_2 \neq 1\}$. We use the identity \eqref{eq:3b} to prove that:
\begin{equation}\label{eq:7da}
\left|\left(\rho_1(\epsi^2 \Delta) \epsi^2 \Delta_\Ss -[\rho_1(\epsi^2 \Delta) \epsi^2 \Delta_\Ss]_{r,s}  \right)v\right| \leq C |\trho_2(\epsi^2 \Delta) \epsi(P_\epsi-\lambda) v| + O(\epsi^N) |v|.
\end{equation}
Since $\Delta_\Ss = -\sum_{j=1}^n X_j^2$, we have
\begin{equations}\label{eq:3c}
\rho_1(\epsi^2\Delta) \epsi^2 \Delta_\Ss - [\rho_1(\epsi^2\Delta) \epsi^2 \Delta_\Ss]_{r,s} \in \sum_{j=1}^n \epsi \Psi_\epsi^{0+} \cdot \epsi X_j + \epsi^2 \Psi_\epsi^{0+},
\end{equations}
where the terms in $\Psi_\epsi^{0+}$ have wavefront sets contained in $\WF_\epsi(\rho_1(\epsi^2 \Delta))$, itself contained in $\Ell_\epsi(\trho_1(\epsi^2 \Delta))$. Thus,
\begin{equations*}
\left|\left(\rho_1(\epsi^2 \Delta) \epsi^2 \Delta_\Ss-[(\rho_1(\epsi^2 \Delta) \epsi^2 \Delta_\Ss]_{r,s}  \right)v\right| \\
\leq O(\epsi) \sum_{j=1}^n |\trho_1(\epsi^2 \Delta) \epsi X_j v|_{H^{1/3}_\epsi} + O(\epsi^2) |\trho_1(\epsi^2 \Delta) v|_{H^{2/3}_\epsi} + O(\epsi^\infty)|v|.
\end{equations*}
Theorem \ref{thm:1} applied with the pair $(\trho_1, \trho_2)$ estimates the right hand side by $C |\trho_2(\epsi^2 \Delta) \epsi (P_\epsi-\lambda) v|+O(\epsi^N)|v|$. This gives \eqref{eq:7da}.

Thanks to \eqref{eq:7da},
\begin{equations}\label{eq:3e}
|[\rho_1(\epsi^2\Delta) \epsi^2 \Delta_\Ss]_{r,s} v| \leq |\rho_1(\epsi^2\Delta) \epsi^2 \Delta_\Ss v| + C|\trho_2(\epsi^2 \Delta) \epsi (P_\epsi-\lambda) v| + O(\epsi^N) |v| \\ \leq C|\trho_2(\epsi^2 \Delta) \epsi (P_\epsi-\lambda) v| + O(\epsi^N) |v|.
\end{equations}
In the second line we used Theorem \ref{thm:1} with the pair $(\rho_1, \trho_2)$. To show \eqref{eq:3a}, it remains to control $|\trho_2(\epsi^2 \Delta) \epsi (P_\epsi-\lambda) v|$ by $|[\rho_2(\epsi^2 \Delta) \epsi (P_\epsi-\lambda)]_{r,s} v|$. We will need the following lemma:

\begin{lem}\label{lem:8} Let $m \leq 0$ and $B_0 \in \Psi_\epsi^m$ such that $\WF_\epsi(B_0) \subset \Ell_\epsi(\rho_2(\epsi^2\Delta))$. For every $N > 0$ there exists $B_1 \in \Psi_\epsi^{m-1/4}$ with $\WF_h(B_1) \subset \Ell_\epsi(\rho_2(\epsi^2\Delta))$ such that
\begin{equation}\label{eq:0w}
|B_0 \epsi(P_\epsi-\lambda)  v| \leq |B_0\epsi[P_\epsi - \lambda]_{r,s} v| + O(\epsi^{1/3}) |B_1 \epsi(P_\epsi-\lambda) v| + O(\epsi^N) |v|.
\end{equation}
\end{lem}

\begin{proof} The idea is similar to the second part of the proof of Theorem \ref{thm:1}. We have
\begin{equations}\label{eq:7o}
B_0 \epsi(P_{\epsi}-\lambda) = B_0 \epsi[P_{\epsi}-\lambda]_{r,s} + \epsi \sum_{j=1}^n \epsi X_j \cdot  \Psi^{m+}_\epsi + \epsi \Psi^{m+}_\epsi \\
 = B_0 \epsi[P_{\epsi}-\lambda]_{r,s} + \epsi \Lambda_{1/3} \cdot \sum_{j=1}^n \epsi X_j \cdot \Psi^{m-1/4}_\epsi + \epsi \Lambda_{2/3} \cdot \Psi^{m-1/4}_\epsi.
\end{equations}
Let $\trho_3, \trho_4 \in C^\infty(\R^3)$ satisfying \eqref{eq:0g} and such that $\WF_\epsi(B_0) \cap \WF_\epsi(\trho_3(\epsi^2\Delta)-\Id) = \emptyset$. Equivalently, $\trho_3(\epsi^2 \Delta) B_0 = B_0 + \epsi^\infty \Psi^{-\infty}_\epsi$. We multiply both sides of \eqref{eq:7o} by $\trho_3(\epsi^2 \Delta)$ to obtain $B_0 \epsi(P_{\epsi}-\lambda) - B_0 \epsi[P_{\epsi}-\lambda]_{r,s}$
\begin{equation}\label{eq:8c}
= \epsi \Lambda_{1/3} \cdot \trho_3(\epsi^2 \Delta) \sum_{j=1}^n \epsi X_j \cdot \epsi \Psi^{m-1/4}_\epsi + \epsi \Lambda_{2/3} \cdot \trho_3(\epsi^2 \Delta) \cdot \Psi^{m-1/4}_\epsi + \epsi^\infty \Psi^{-\infty}_\epsi.
\end{equation}
Thus there exist operators $\tB_1^j \in \Psi^{m-1/3+}_\epsi \subset \Psi^{m-1/4}_\epsi$ and $\tB_2^j \in \Psi^{m-2/3+} \subset \Psi^{m-1/4}_\epsi$ with wavefront sets contained in $\WF_\epsi(B_0)$ such that $|B_0 \epsi(P_\epsi-\lambda) v - B_0 \epsi [P_{\epsi}-\lambda]_{r,s} v|$
\begin{equation}\label{eq:0c}
\leq \epsi \sum_{j=1}^n \sum_{k=1,2} |\trho_3(\epsi^2 \Delta) \epsi X_j  \tB_k^j v|_{H^{1/3}_\epsi} + \epsi |\trho_3(\epsi^2 \Delta) \tB_k^j v|_{H^{2/3}_\epsi} + O(\epsi^\infty) |v|.
\end{equation}
Theorem \ref{thm:1} applied to $(\trho_3, \trho_4)$ estimates the right hand side of \eqref{eq:0c}:
\begin{equation*}
|B_0 \epsi(P_\epsi-\lambda) v| \leq |B_0 \epsi[P_{\epsi}-\lambda]_{r,s} v| + O(\epsi^{1/3}) \sum_{j,k} |\trho_4(\epsi^2 \Delta) \epsi(P_\epsi-\lambda) \tB_k^j v| + O(\epsi^N)|v|.
\end{equation*}
Since $\WF_\epsi(\tB_k^j) \cap \WF_\epsi(\trho_3(\epsi^2 \Delta) - \Id)$ is empty and $\trho_4=1$ on $\supp(\trho_3)$, $\trho_4(\epsi^2\Delta) \epsi(P_\epsi-\lambda) \tB_k^j = \epsi(P_\epsi-\lambda) \tB_k^j+ \epsi^\infty \Psi^{-\infty}_\epsi$. It follows that
\begin{equation}\label{eq:6k}
|B_0 \epsi(P_\epsi-\lambda) v| \leq |B_0 \epsi[P_{\epsi}-\lambda]_{r,s} v| + O(\epsi^{1/3}) \sum_{j,k} |\epsi(P_\epsi-\lambda) \tB_k^j v| + O(\epsi^N)|v|.
\end{equation}

Fix $1 \leq j \leq n$ and $1 \leq k \leq 2$. We recall that $\tB_k^j \in \Psi^{m-1/4}_\epsi$ with wavefront sets contained in $\WF_\epsi(B_0)$. Similarly to \eqref{eq:8c}, we write
\begin{equation*}
[\epsi (P_\epsi-\lambda), \tB_k^j] = \epsi\Lambda_{1/3} \cdot \trho_3(\epsi^2 \Delta) \sum_{\ell=1}^n \epsi X_\ell  \tB_{k,1}^{j,\ell} + \epsi \Lambda_{2/3} \cdot \trho_3(\epsi^2\Delta) \cdot \epsi \tB_{k,2}^{j,\ell} + \epsi^\infty \Psi_\epsi^{\infty},
\end{equation*}
for some operators $\tB_{k,1}^{j,\ell}, \tB_{k,2}^{j,\ell} \in \Psi^{m-1/2}_\epsi$ with wavefront sets contained in $\WF_h(B_0)$. And similarly to \eqref{eq:6k}, we obtain the estimate
\begin{equation}\label{eq:3d}
|\epsi (P_\epsi-\lambda) \tB_k^j v| \leq |\tB_k^j \epsi (P_\epsi-\lambda) v| + O(\epsi^{1/3}) \sum_{\ell, k, k'} |\epsi(P_\epsi-\lambda) \tB_{k,k'}^{j,\ell} v| + O(\epsi^N)|v|.
\end{equation}
We observe that the terms $O(\epsi^{1/3}) |\epsi(P_\epsi-\lambda) \tB_{k,k'}^{j,\ell} v|$ above involve a factor $\epsi^{1/3}$ and an operator $\tB_{k,k'}^{j,\ell}$ that is $1/4$-smoother than $\tB_k^j$. Since $\epsi (P_\epsi - \lambda) \cdot \Psi_\epsi^{-2} \subset \Psi_\epsi^0$, we can then iterate \eqref{eq:3d} sufficiently many times to get an operator $B_1 \in \Psi^{m-1/4}_h$ with wavefront set contained in $\WF_h(B_0)$, such that
\begin{equation*}
\sum_{j,k} |\epsi (P_\epsi-\lambda) \tB_k^j v| \leq |B_1 \epsi(P_\epsi -\lambda) v| + O(\epsi^N)|v|.
\end{equation*}
We combine this bound with \eqref{eq:6k} to conclude the proof.\end{proof}

The right hand side of \eqref{eq:0w} involves the term $O(\epsi^{1/3}) |B_1 \epsi(P_\epsi-\lambda) v|$ which comes with the factor $\epsi^{1/3}$, and the operator $B_1$. This operator is $1/4$-smoother than $B_0$. We can then iterate \eqref{eq:0w} sufficiently many times starting from $B_0 = \trho_2(\epsi^2 \Delta) \in \Psi_\epsi^{0}$ to obtain operators $B_1 \in \Psi_\epsi^{-1/4}, ..., B_{3N} \in \Psi^{-3N/4}_\epsi$ with wavefront sets contained in $\Ell_\epsi(\rho_2(\epsi^2 \Delta))$ and such that
\begin{equation*}
|\trho_2(\epsi^2 \Delta) \epsi (P_\epsi-\lambda) v| \leq \sum_{k=0}^{3N-1} \epsi^{k/3} |B_k \epsi [P_{\epsi}-\lambda]_{r,s} v| + O(\epsi^N)|\epsi (P_\epsi-\lambda) B_{3N} v|.
\end{equation*}
For $N$ large enough, $\epsi (P_\epsi-\lambda) B_{3N} \in \Psi^0_\epsi$ and $O(\epsi^N)|\epsi (P_\epsi-\lambda) B_{3N} v| = O(\epsi^N)|v|$. In addition the operator $[\rho_2(\epsi^2 \Delta)]_{r,s}$ is elliptic on the wavefront set of the $B_k$ thus
\begin{equation*}
|\trho_2(\epsi^2 \Delta) \epsi (P_\epsi-\lambda) v| \leq |[\rho_2(\epsi^2 \Delta)\epsi (P_\epsi-\lambda)]_{r,s} v| + O(\epsi^N)|v|.
\end{equation*}
Plug this estimate back in \eqref{eq:3e} to conclude the proof of the proposition.\end{proof}

\textit{Starting now we consider $R,N,r,s$ fixed, $\epsi_0$ given by Proposition \ref{prop:1d} and $\epsi, h$ satisfying $0 < \epsi \leq h \leq \epsi_0$.} Fix $\rho_1, \rho_2$ satisfying \eqref{eq:0g}, $\chi_1 \de 1-\rho_1$ and $\chi$ be equal to $1$ near $0$ and such that $\chi \rho_2 = 0$. Define $Q\de \chi(h^2\Delta)$ and
\begin{equations}\label{Eq:2i}
P_{\epsi}(\lambda) \de h(P_\epsi-\lambda)-iQ = -ih\epsi \Delta_\Ss -ih H_1-\lambda h-iQ ,\\
\tP_{\epsi}(\lambda) \de  -i h\epsi \chi_1(\epsi^2 \Delta) \Delta_\Ss -i h H_1-\lambda h-iQ.
\end{equations}
If $P_{\epsi}(\lambda) u \de f$ then $\tP_{\epsi}(\lambda)u = f + ih \epsi\rho_1(\epsi^2 \Delta)\Delta_\Ss u \de  F$. We  use \eqref{eq:0j} to go from the space $H^{r,s}_h$ to the space $H^{r,s}_1$ and we bound $F$ by Proposition~\ref{prop:1d}:
\begin{equations*}
|F|_{H^{r,s}_h} \leq |f|_{H^{r,s}_h} + h \epsi |\rho_1(\epsi^2 \Delta)\Delta_\Ss u|_{H^{r,s}_h} \leq |f|_{H^{r,s}_h} + h^{-|s|-|r|+1} \epsi^{-1} |\rho_1(\epsi^2 \Delta) \epsi^2 \Delta_\Ss u|_{H^{r,s}_1} \\
\leq |f|_{H^{r,s}_1}+C h^{-|s|-|r|+1} \epsi^{-1}|\rho_2(\epsi^2\Delta) \epsi(P_\epsi- \lambda)u|_{H^{r,s}_1}+O(h^{-|s|-|r|}\epsi^N)|u|_{H^{r,s}_1}.
\end{equations*}
We note that $\rho_2(\epsi^2\Delta) Q = 0$ because $\epsi \leq h$, hence
\begin{equations*}
h \epsi^{-1} \rho_2(\epsi^2\Delta) \epsi (P_\epsi-\lambda )u = \rho_2(\epsi^2\Delta) \left(h( P_\epsi-\lambda) - iQ \right)u =\rho_2(\epsi^2\Delta) P_{\epsi}(\lambda)u = \rho_2(\epsi^2 \Delta) f.
\end{equations*}
It follows that
\begin{equation}\label{eq:3g}
|F|_{H^{r,s}_h} \leq |f|_{H^{r,s}_h}+C h^{-|s|-|r|}|\rho_2(\epsi^2 \Delta) f|_{H^{r,s}_1}+O(h^{-|s|-|r|}\epsi^N)|u|_{H^{r,s}_1}.
\end{equation}
The operator $\rho_2(\epsi^2 \Delta)$ is bounded on $H^{r,s}_1$ since $\rho_2(\epsi^2 \Delta) \in \Psi_\epsi^0 \subset \Psi_1^0$ and by \eqref{eq:3b},
\begin{equation*}
e^{G_{r,s}(1)} \rho_2(\epsi^2 \Delta) e^{-G_{r,s}(1)} = \rho_2(\epsi^2 \Delta) + \Psi_1^{-1+} \in \Psi^0_1.
\end{equation*}
Therefore $|\rho_2(\epsi^2 \Delta) f|_{H^{r,s}_1} \leq C|f|_{H^{r,s}_1}$; and $|f|_{H^{r,s}_1}$ is controlled by  $h^{-|s|-|r|} |f|_{H^{r,s}_h}$ because of \eqref{eq:0j}. The estimate \eqref{eq:3g} yields
\begin{equation*}
|F|_{H^{r,s}_h} \leq C h^{-2|s|-2|r|} |f|_{H^{r,s}_h} + O(h^{-2|s|-2|r|}\epsi^N)|u|_{H^{r,s}_h}.
\end{equation*}
Recalling that $f=P_{\epsi}(\lambda) u$ and $F=\tP_{\epsi}(\lambda)u$ we obtain the main result of this section:

\begin{theorem}\label{thm:2} For every $R,N \geq 0$, and $r, s \in \R$ there exist $C_{R,N,r,s} > 0$ and $\epsi_0 >0$ such that if $P_\epsi(\lambda)$ and $\tP_\epsi(\lambda)$ are defined in \eqref{Eq:2i},
\begin{equations*}
\lambda \in \Dd(0,R), \ 0 < \epsi \leq h \leq \epsi_0 \\ \Rightarrow \
|\tP_{\epsi}(\lambda)u|_{H^{r,s}_h} \leq C_{R,N,r,s} h^{-2|s|-2|r|} |P_{\epsi}(\lambda)u|_{H^{r,s}_h} + O(h^{-2|s|-2|r|}\epsi^N)|u|_{H^{r,s}_h}.
\end{equations*}\end{theorem}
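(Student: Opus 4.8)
The plan is to deduce Theorem \ref{thm:2} directly from the definitions \eqref{Eq:2i}, the high-frequency estimate of Proposition \ref{prop:1d}, and the comparability \eqref{eq:0j} of the anisotropic norms at the scales $h$ and $1$; no new analytic input is needed. First I would set $f \de P_\epsi(\lambda) u$ and $F \de \tP_\epsi(\lambda) u$ and compare the two operators in \eqref{Eq:2i}: they differ only in their vertical term, so $F = f + ih\epsi\,\rho_1(\epsi^2\Delta)\Delta_\Ss u$. Hence the entire content of the theorem reduces to controlling $h\epsi\,|\rho_1(\epsi^2\Delta)\Delta_\Ss u|_{H^{r,s}_h} = h\epsi^{-1}\,|\rho_1(\epsi^2\Delta)\epsi^2\Delta_\Ss u|_{H^{r,s}_h}$, which I would first convert to the scale-$1$ norm via \eqref{eq:0j} (costing a factor $h^{-|s|-|r|}$) and then bound by Proposition \ref{prop:1d}.

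The one algebraic point that makes the right-hand side collapse to $f$ rather than to $\epsi(P_\epsi-\lambda)u$ is the identity $\rho_2(\epsi^2\Delta)Q = 0$. This holds because $\epsi \le h$: the cutoff $Q = \chi(h^2\Delta)$ is supported near the zero section at scale $h$, while $\rho_2(\epsi^2\Delta)$ is supported away from the zero section at scale $\epsi$, and since $\epsi \le h$ these symbol supports are disjoint. Consequently $\epsi^{-1}\rho_2(\epsi^2\Delta)\,\epsi(P_\epsi-\lambda)u = h^{-1}\rho_2(\epsi^2\Delta)\big(h(P_\epsi-\lambda)-iQ\big)u = h^{-1}\rho_2(\epsi^2\Delta) f$, so the Proposition \ref{prop:1d} bound is expressed purely in terms of $f$, yielding an inequality of the shape \eqref{eq:3g}.

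It then remains to check that $\rho_2(\epsi^2\Delta)$ is bounded on the anisotropic space $H^{r,s}_1$: conjugating by $e^{G_{r,s}(1)}$ and invoking \eqref{eq:3b} shows $e^{G_{r,s}(1)}\rho_2(\epsi^2\Delta)e^{-G_{r,s}(1)} = \rho_2(\epsi^2\Delta) + \Psi_1^{-1+} \in \Psi_1^0$, which is bounded on $L^2$. Thus $|\rho_2(\epsi^2\Delta) f|_{H^{r,s}_1} \le C|f|_{H^{r,s}_1}$, and one further application of \eqref{eq:0j} turns this into $Ch^{-|s|-|r|}|f|_{H^{r,s}_h}$. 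Assembling the three steps — each of the two passages between the scales $h$ and $1$ contributing a factor $h^{-|s|-|r|}$, and the $O(\epsi^N)$ error terms of Proposition \ref{prop:1d} being absorbed with the same powers of $h$ — produces exactly the stated inequality, with the exponent $-2|s|-2|r|$.

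The main thing to get right is therefore purely bookkeeping: tracking the powers of $h$ generated by \eqref{eq:0j} at each change of scale, and verifying that the support-disjointness $\rho_2(\epsi^2\Delta)Q = 0$ genuinely uses the hypothesis $\epsi \le h$. There is no substantive obstacle beyond this, since Theorem \ref{thm:1} and Proposition \ref{prop:1d} already carry all of the hard hypoelliptic work.
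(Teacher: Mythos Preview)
Your proposal is correct and follows essentially the same approach as the paper: set $f=P_\epsi(\lambda)u$, $F=\tP_\epsi(\lambda)u$, note $F=f+ih\epsi\rho_1(\epsi^2\Delta)\Delta_\Ss u$, pass to the scale-$1$ norm via \eqref{eq:0j}, apply Proposition~\ref{prop:1d}, use $\rho_2(\epsi^2\Delta)Q=0$ (from $\chi\rho_2=0$ and $\epsi\le h$) to rewrite the right-hand side in terms of $f$, check boundedness of $\rho_2(\epsi^2\Delta)$ on $H^{r,s}_1$ via \eqref{eq:3b}, and convert back to scale $h$. The paper's argument is exactly this, with the same bookkeeping of the two factors $h^{-|s|-|r|}$.
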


\section{Stochastic stability of Pollicott--Ruelle resonances}\label{sec:5}

\subsection{Invertibility of $P_{\epsi}(\lambda)$} Recall that $P_\epsi(\lambda)$ is given by $P_\epsi(\lambda) = h(P_\epsi-\lambda)-iQ$ on $H^{r,s}_h$, and let $D^{r,s}_h$ be its domain on $H^{r,s}_h$:
\begin{equation*}
D^{r,s}_h \de \{ u \in H^{r,s}_h, \ H_1 u \in H^{r,s}_h, \Delta_\Ss u \in H^{r,s}_h \},
\end{equation*}
where $H_1 u, \Delta_\Ss u$ are first seen as distributions. We prove here that the operator $P_\epsi(\lambda)$ is invertible from $D^{r,s}_h$ to $H^{r,s}_h$, provided that $\lambda$ is in a compact set and that $h$ is small enough, $s$ is large enough.

\begin{theorem}\label{thm:5} Let $R > 0$ and $r \in \R$. There exists $s_0 > 0$ such that for every $s \geq s_0$, there exists $h_0 > 0$ with
\begin{equation*}
\epsi \leq h \leq h_0, \ \ |\lambda| \leq R \ \ \Rightarrow \ \  P_\epsi(\lambda) : D^{r,s}_h \rightarrow H^{r,s}_h \text{ is invertible. }
\end{equation*}
\end{theorem}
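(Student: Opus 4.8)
The plan is to establish invertibility of $P_\epsi(\lambda)$ from $D^{r,s}_h$ to $H^{r,s}_h$ in two moves: first upgrade the high-frequency hypoelliptic estimate of Theorem \ref{thm:2} together with a low-frequency radial estimate to a global a priori bound $|u|_{H^{r,s}_h} \leq Ch^{-K}|P_\epsi(\lambda)u|_{H^{r,s}_h}$ valid for $s$ large and $h$ small, and second obtain the corresponding bound for the adjoint, which then gives surjectivity and hence invertibility. The structure mirrors \cite{DZ2}, the difference being that here one must feed the subelliptic estimates through the splitting into momentum frequencies $\gtrsim \epsi^{-1}$ and $\lesssim \epsi^{-1}$ that is already set up in \eqref{Eq:2i}.

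First I would handle the high-frequency part. Write $\rho_1(\epsi^2\Delta)u$ for the component microlocalized at frequencies $\gtrsim \epsi^{-1}$. Since $\rho_2(\epsi^2\Delta)Q=0$, one has $\rho_2(\epsi^2\Delta)P_\epsi(\lambda)u = \rho_2(\epsi^2\Delta)h(P_\epsi-\lambda)u$, and Theorem \ref{thm:1} (or rather Proposition \ref{prop:1d}, in the $H^{r,s}_1$ norm, converted to $H^{r,s}_h$ via \eqref{eq:0j}) controls $|\rho_1(\epsi^2\Delta)\epsi^2\Delta_\Ss u|$ — hence $|\rho_1(\epsi^2\Delta)u|_{H^{r,s}_h}$ after also bounding the elliptic $\epsi^2\Delta_\Ss$ piece — by $Ch^{-K}|P_\epsi(\lambda)u|_{H^{r,s}_h}$ plus $O(\epsi^N)|u|_{H^{r,s}_h}$. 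Then I would handle the low-frequency part $\chi_1(\epsi^2\Delta)u$: there $\tP_\epsi(\lambda)$ of \eqref{Eq:2i} is a bona fide semiclassical perturbation of $-ihH_1-iQ-\lambda h$, and Theorem \ref{thm:2} lets me replace $P_\epsi(\lambda)$ by $\tP_\epsi(\lambda)$ at the price of powers of $h$. For $\tP_\epsi(\lambda)$ one uses the anisotropic-space machinery of \cite{DZ1}: the radial source estimate at $E_s^*$ (valid in $H^{r,s}_h$ when $s$ is large), the radial sink estimate at $E_u^*$, propagation of singularities along $H_{\sigma_{H_1}}$, the elliptic estimate off the characteristic set, and the fact that $\Im(iQ)\geq 0$ provides the needed positivity/absorption near the zero section — exactly as in \cite[\S 3]{DZ1} and \cite[\S 4]{DZ2}. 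The hypoelliptic term $-ih\epsi\chi_1(\epsi^2\Delta)\Delta_\Ss$ is a semiclassical operator in $h^2\Psi_h^{2}$ bounded on all the relevant scales since $\epsi\le h$, so it does not disturb these microlocal estimates. Patching the high- and low-frequency estimates with a microlocal partition of unity, and absorbing the $O(\epsi^N)|u|_{H^{r,s}_h}$ remainder into the left side for $h$ small, yields
\begin{equation*}
|u|_{H^{r,s}_h} \leq C h^{-K} |P_\epsi(\lambda)u|_{H^{r,s}_h}, \qquad u \in D^{r,s}_h,
\end{equation*}
uniformly for $|\lambda|\le R$, $\epsi\le h\le h_0$, once $s\geq s_0$.

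This a priori estimate gives injectivity and closed range. For surjectivity I would run the same argument on the formal adjoint. The adjoint of $P_\epsi(\lambda)$ on $L^2$ is $h(P_\epsi^*-\bar\lambda)+iQ^*$ with $P_\epsi^* = \frac1i(-H_1+\epsi\Delta_\Ss)$ (since $H_1$ is divergence-free and $\Delta_\Ss$ self-adjoint), and relative to the anisotropic spaces the adjoint of $P_\epsi(\lambda):D^{r,s}_h\to H^{r,s}_h$ is realized, after conjugating by $e^{\pm G_{r,s}}$, as an operator of the same type but with $H_1$ replaced by $-H_1$ and $(r,s)$ by $(-r,-s)$; the roles of radial source and sink $E_s^*,E_u^*$ swap, so the same estimates apply with $-s\le -s_0$, i.e. again $s$ large. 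One also checks $\Im(\langle iP_\epsi(\lambda)^*u,u\rangle)$ has the right sign from $Q^*\ge0$ and $\Re\langle iP_\epsi u,u\rangle\ge0$. Thus the adjoint also satisfies an a priori bound, forcing $P_\epsi(\lambda)$ to have dense range; combined with closed range this gives surjectivity, and with injectivity, invertibility. The main obstacle is bookkeeping: verifying that the hypoelliptic term $h\epsi\Delta_\Ss$ (and its cutoff $h\epsi\chi_1(\epsi^2\Delta)\Delta_\Ss$) genuinely behaves as a negligible semiclassical perturbation inside each of the radial-point and propagation estimates of \cite{DZ1} — i.e. that the anisotropic weight $e^{G_{r,s}}$ and the vertical Laplacian interact harmlessly — and in tracking the powers of $h$ through Theorem \ref{thm:2} so that the $O(\epsi^N)$ remainder can still be absorbed; the genuinely new analytic input (the uniform hypoellipticity) has already been isolated in Theorems \ref{thm:1}--\ref{thm:2}, so beyond this the argument is a routine adaptation of \cite{DZ2}.
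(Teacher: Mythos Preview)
Your overall strategy matches the paper's: reduce from $P_\epsi(\lambda)$ to $\tP_\epsi(\lambda)$ via Theorem~\ref{thm:2}, run the radial-source/sink, propagation, and elliptic estimates of \cite{DZ1} on $\tP_\epsi(\lambda)$ to get $|u|_{H^{r,s}_h}\le Ch^{-1}|\tP_\epsi(\lambda)u|_{H^{r,s}_h}$ (this is the paper's Proposition~\ref{prop:3}), and then repeat for the adjoint with $H_1\mapsto -H_1$, $(r,s)\mapsto(-r,-s)$ to get surjectivity. Two points, however, need correction.

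First, your separate ``high-frequency'' paragraph is both unnecessary and incorrect as written. Proposition~\ref{prop:1d} controls $|\rho_1(\epsi^2\Delta)\epsi^2\Delta_\Ss u|$, not $|\rho_1(\epsi^2\Delta)u|$, and you cannot recover the latter from the former by ``ellipticity of $\epsi^2\Delta_\Ss$'': the operator $\Delta_\Ss$ is elliptic only in the vertical directions and is characteristic on all of $E_0^*\oplus E_s^*\oplus E_u^*$. The paper does not split $u$ into $\rho_1(\epsi^2\Delta)u+\chi_1(\epsi^2\Delta)u$; it estimates the full $u$ by $\tP_\epsi(\lambda)u$ and then invokes Theorem~\ref{thm:2} once.

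Second, and more important, the term $-ih\epsi\chi_1(\epsi^2\Delta)\Delta_\Ss$ is \emph{not} in $h^2\Psi_h^2$ in any useful sense, and it is not a negligible perturbation. Writing its $h$-symbol as $\sum_j\sigma_{X_j}(x,\xi)\,\chi_1(|\xi'|_g^2)\sigma_{X_j}(x,\xi')$ with $\xi'=\epsi h^{-1}\xi$, one sees (as the paper checks in \eqref{eq:0f}) that it lies in $S^1$ uniformly in $\epsi\le h$, with no gain of $h$; when $\epsi=h$ it is an $O(1)$ contribution to the principal symbol of $\tP_\epsi(\lambda)$. The reason it does not spoil the radial and propagation estimates is not smallness but \emph{sign}: it contributes the nonnegative piece $\chi_1(\tfrac{\epsi^2}{h^2}|\xi|_g^2)\tfrac{\epsi}{h}\sigma_{\Delta_\Ss}$ to $q_\epsi=\Im\sigma(\tP_\epsi(\lambda))$, and \cite[Propositions~2.6--2.7]{DZ1} and \cite[Proposition~2.2]{DZ2} require precisely $q_\epsi\ge 0$. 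One must also check (the paper does this in \eqref{Eq:2z}--\eqref{Eq:2k}) that conjugation by $e^{G_{r,s}(h)}$ preserves this structure: the real part becomes $\sigma_{H_1}+sh\log|\xi|_g\cdot S^0$ and the imaginary part stays nonnegative modulo $hS^0$, using $[\Delta_\Ss,\Delta]=0$ and $\{m,\sigma_{H_1}\}\ge 0$. With this correction your outline goes through as in the paper.
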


A necessary step to prove this result is a bound of the form $|u|_{H^{r,s}_h} \leq C_h|P_{\epsi}(\lambda)|_{H^{r,s}_h}$. In view of Theorem \ref{thm:2} applied with $N=2|s|+2|r|+1$ it suffices to show that $|u|_{H^{r,s}_h} \leq Ch^{-1} |\tP_{\epsi}(\lambda)|_{H^{r,s}_h}$ where we recall that $\tP_{\epsi}(\lambda)$ is given by
\begin{equation*}
\tP_{\epsi}(\lambda) =  -i h\epsi \chi_1(\epsi^2 \Delta) \Delta_\Ss -i h H_1-\lambda h-iQ.
\end{equation*}
See $\tP_\epsi(\lambda)$ as a pseudodifferential operator in the semiclassical parameter $h$. Its semiclassical principal symbol is $p_\epsi - i q_\epsi$, where $p_\epsi = \sigma_{H_1}$ and 
\begin{equation*}
q_\epsi(x,\xi) = \chi_1\left(\frac{\epsi^2}{h^2} |\xi|_g^2\right) \frac{\epsi}{h} \sigma_{\Delta_\Ss}(x,\xi)+\chi(|\xi|^2_g).
\end{equation*}
It is clear that $p_\epsi$ belongs to $S^{1}/hS^0$. We claim that $q_\epsi$ also belong to $S^1/hS^0$ or equivalently that
\begin{equation}\label{eq:0f}
\chi_1\left(\frac{\epsi^2}{h^2} |\xi|_g^2\right) \frac{\epsi}{h} \sigma_{\Delta_\Ss}(x,\xi) \in S^1/hS^0.
\end{equation}
Recall that $\Delta_\Ss = - \sum_{j=1}^n X_j^2$, write $\sigma_{X_j}$ for the principal symbol of $\frac{h}{i} X_j$ and note that
\begin{equation*}
q_\epsi(x,\xi) = \sum_{j=1}^n \sigma_{X_j}(x,\xi)\chi_1\left(|\xi'|_g^2\right) \sigma_{X_j}(x,\xi')+\chi(|\xi|_g^2),  \ \ \ \ \ \xi' \de  \epsi h^{-1} \xi.
\end{equation*}
It suffices to show that each term in the above sum belongs to $S^1/hS_0$, thus that $(x,\xi) \mapsto \chi_1(|\xi'|_g^2) \sigma_{X_j}(x,\xi')$ belongs to $S^0/hS^{-1}$. When $|\az|+|\beta|>0$,
\begin{equations*}
\lr{\xi}^{|\beta|} \left|\p_x^\az \p_\xi^\beta \left( \chi_1(|\xi'|_g^2) \sigma_{X_j}(x,\xi') \right)\right| = \lr{\xi}^{|\beta|}(\epsi h^{-1})^{|\beta|} |\p_x^\az \p_{\xi'}^\beta \chi_1(|\xi'|_g^2) \sigma_{X_j}(x,\xi')| \\ \leq \lr{\xi'}^{|\beta|} |\p_x^\az \p_{\xi'}^\beta \chi_1(|\xi'|_g^2) \sigma_{X_j}(x,\xi')| \leq  C_{\az \beta},
\end{equations*}
where in the last inequality we used that $\chi'$ vanishes in a neighborhood of $0$ and that $\chi_1(|\xi'|_g^2) \sigma_{X_j}(x,\xi')$ belongs to $S^0$ as a symbol in $\xi'$. Since for $\az=\beta=0$ there is nothing to prove, we obtain \eqref{eq:0f} and $q_\epsi \in S^1/hS^0$.

Hence the operator $\tP_{\epsi}(\lambda)$ belongs to $\Psi^1_h$. We next compute the principal symbol of the operator $[\tP_\epsi(\lambda)]_{r,s}  \de e^{G_{r,s}(h)} P_\epsi(\lambda) e^{-G_{r,s}(h)}$. We write $p_{\epsi,r,s} - i q_{\epsi,r,s}$ for the principal symbol of $[\tP_\epsi(\lambda)]_{r,s}$, where $p_{\epsi,r,s}, q_{\epsi,r,s}$ are real-valued. The symbol $p_{\epsi,r,s}$ is given by:
\begin{equations}\label{Eq:2za}
p_{\epsi,r,s} = \sigma_{H_1} - \left\{ \sigma_{G_{r,s}}, \chi_1 \left( \dfrac{\epsi^2}{h^2} |\xi|_g^2 \right) \dfrac{\epsi}{h} \sigma_{\Delta_\Ss} \right\} \\
 = \sigma_{H_1} - sh \left\{ m, \chi_1 \left( \dfrac{\epsi^2}{h^2} |\xi|_g^2 \right) \dfrac{\epsi}{h} \sigma_{\Delta_\Ss} \right\}  \rho_0(|\xi|_g^2) \log |\xi|_g \mod hS^0.
\end{equations}
Here we used that $\sigma_{G_{r,s}} = \log(|\xi|_g) \rho_0(|\xi|_g^2) m \mod h S^{-1}$ by \eqref{Eq:3c}, and that $\{\sigma_{\Delta_\Ss}, |\xi|_g^2 \} = 0$ because $\Delta_\Ss$ commutes with $\Delta$, see \eqref{eq:1c}. Since $m$ is homogeneous of degree $0$, we deduce from \eqref{eq:0f} and \eqref{Eq:2za} that
\begin{equation}\label{Eq:2z}
p_{\epsi,r,s} = \sigma_{H_1} + sh \log |\xi|_g \cdot S^0 \mod hS^0.
\end{equation}
Similarly the symbol $q_{\epsi,r,s}$ is given by:
\begin{equations}\label{Eq:2k}
q_{\epsi,r,s} = Q(|\xi|_g^2) + \chi\left( \dfrac{\epsi^2}{h^2} |\xi|_g^2 \right) \dfrac{\epsi}{h} \sigma_{\Delta_\Ss} + h \{ \sigma_{G_{r,s}},\sigma_{H_1}\} \\
 = Q(|\xi|_g^2) + \chi\left( \dfrac{\epsi^2}{h^2} |\xi|_g^2 \right) \dfrac{\epsi}{h} \sigma_{\Delta_\Ss} + sh \{m,\sigma_{H_1}\} \rho_0(|\xi|_g^2) \log |\xi|_g \mod h S^0,
\end{equations}
where we used that $h\rho_0 m \{ \sigma_{H_1}, \log |\xi|_g \} \in hS^0$ and that $h \{ \sigma_{H_1}, \rho_0(|\xi|_g^2) \} \log |\xi|_g \in hS^0$. We remark that since $\{m, \sigma_{H_1}\} \geq 0$, $q_{\epsi,r,s}$ is nonnegative when $s \geq 0$.

The key step to prove Theorem \ref{thm:5} is the following Proposition, whose proof is largely inspired from \cite[Proposition 3.1]{DZ1} and \cite[Lemma 4.2]{DZ2}:

\begin{proposition}\label{prop:3} Let $R > 0$, $r \in \R$. There exists $s_0$ such that for $s \geq s_0$, there exist $h_0 > 0$ and $C_{R,r,s} > 0$ with
\begin{equation*}
0 < \epsi \leq h \leq h_0, \ |\lambda| \leq R \ \ \Rightarrow \ \  |u|_{H^{r,s}_h} \leq C_{R,r,s} h^{-1} |\tP_\epsi(\lambda) u|_{H^{r,s}_h}.
\end{equation*}\end{proposition}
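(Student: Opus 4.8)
The plan is to follow the strategy of \cite[Proposition 3.1]{DZ1} and \cite[Lemma 4.2]{DZ2}: conjugate by the escape‑function weight to pass to an $L^2$ estimate, then patch together an elliptic estimate, radial point estimates at the source $E_s^*$ and the sink $E_u^*$, and a propagation‑of‑singularities estimate, all for the operator $[\tP_\epsi(\lambda)]_{r,s} := e^{G_{r,s}(h)}\tP_\epsi(\lambda)e^{-G_{r,s}(h)}$. First I would set $w := e^{G_{r,s}(h)}u$, so that $|u|_{H^{r,s}_h} = |w|$ and $e^{G_{r,s}(h)}\tP_\epsi(\lambda)u = [\tP_\epsi(\lambda)]_{r,s}w$; it then suffices to prove $|w| \leq C h^{-1}|[\tP_\epsi(\lambda)]_{r,s}w|$ for $w$ in a dense class, say $w \in C^\infty(S^*\Mm)$, the reduction to $D^{r,s}_h$ being a routine approximation argument. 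Recall that $[\tP_\epsi(\lambda)]_{r,s} \in \Psi_h^1$ has principal symbol $p_{\epsi,r,s} - iq_{\epsi,r,s}$ with $p_{\epsi,r,s} = \sigma_{H_1} + sh(\log|\xi|_g)\,S^0 \mod hS^0$ by \eqref{Eq:2z}, and with $q_{\epsi,r,s} \geq 0$ whenever $s \geq 0$ by \eqref{Eq:2k}.

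Next I would split $\oT^*S^*\Mm$ microlocally into four regions. First, the non‑characteristic set of $[\tP_\epsi(\lambda)]_{r,s}$: it contains a full neighborhood of the zero section, since there $q_{\epsi,r,s} \geq Q(|\xi|_g^2) > 0$, as well as everything away from the characteristic set $\Sigma := \{\sigma_{H_1}=0\} = E_s^*\oplus E_u^*$ and everything where $q_{\epsi,r,s} \gtrsim \lr{\xi}$; on that set the elliptic estimate gives $|Bw| \leq C|[\tP_\epsi(\lambda)]_{r,s}w| + O(h^\infty)|w|$ for $B$ microlocalized there. Second, a small conic neighborhood of the radial source $E_s^*\setminus0$: there the escape function has weight $r+s$ (since $m\equiv1$ near $E_s^*$), which for $s$ large enough lies above the radial‑source threshold determined by $R$ and the expansion rate, while the damping $q_{\epsi,r,s}\geq0$ only improves that threshold; the radial source estimate \cite[Proposition 2.6]{DZ1} (which accommodates the absorbing term $-iq_{\epsi,r,s}$) yields $|B_s w| \leq Ch^{-1}|[\tP_\epsi(\lambda)]_{r,s}w| + O(h^\infty)|w|$. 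Third, the rest of $\Sigma$ together with a punctured conic neighborhood of $E_u^*$: every null bicharacteristic of $\sigma_{H_1}$ in $\Sigma$ travels from $E_s^*$ as $t\to-\infty$ to the sink $E_u^*$ as $t\to+\infty$, and since $q_{\epsi,r,s}\geq0$ makes the forward propagation estimate \cite[Proposition 2.5]{DZ1} available, the source control from the second region propagates to give $|B'w| \leq Ch^{-1}|[\tP_\epsi(\lambda)]_{r,s}w| + O(h^\infty)|w|$ for $B'$ microlocalized on $\Sigma$ minus a small neighborhood of $E_u^*$. Fourth, a small conic neighborhood of the radial sink $E_u^*\setminus0$: there the escape function has weight $r-s$ (since $m\equiv-1$), which for $s$ large is sufficiently negative to sit below the radial‑sink threshold, so \cite[Proposition 2.7]{DZ1} gives $|B_u w| \leq Ch^{-1}|[\tP_\epsi(\lambda)]_{r,s}w| + C|B'w| + O(h^\infty)|w|$, and the term $|B'w|$ is absorbed by the third estimate.

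Finally I would choose a microlocal partition of unity subordinate to the cover of $\oT^*S^*\Mm$ by these four regions, add the four estimates, and obtain $|w| \leq Ch^{-1}|[\tP_\epsi(\lambda)]_{r,s}w| + O(h^\infty)|w|$; shrinking $h_0$ absorbs the error term, and conjugating back gives the Proposition, with $s_0$ depending only on $R,r$ and $h_0$ on $R,r,s$. The main obstacle is the bookkeeping in the source and sink steps: one must pin down the threshold $s_0=s_0(R,r)$ and verify the threshold inequalities at $E_s^*$ and $E_u^*$ — involving $\Im\lambda$, the weights $r\pm s$, and the minimal and maximal expansion rates of the flow — \emph{uniformly in} $0<\epsi\leq h\leq h_0$, while checking that the $\epsi$‑dependent pieces of $q_{\epsi,r,s}$ (the cut‑off vertical‑Laplacian term $\tfrac{\epsi}{h}\chi_1(\tfrac{\epsi^2}{h^2}|\xi|_g^2)\sigma_{\Delta_\Ss}$ and the absorbing cut‑off $Q$) are nonnegative and hence harmless for those thresholds. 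By comparison, the remaining input — that $q_{\epsi,r,s}>0$ near the zero section and $\{m,\sigma_{H_1}\}\geq0$, so that the non‑characteristic set exhausts the complement of a neighborhood of $E_s^*\cup E_u^*$ — is routine.
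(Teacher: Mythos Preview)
Your strategy is essentially that of the paper, but there is one genuine gap in your case analysis. In region~3 you assert that ``every null bicharacteristic of $\sigma_{H_1}$ in $\Sigma$ travels from $E_s^*$ as $t\to-\infty$,'' and that forward propagation from the radial source therefore controls all of $\Sigma$ away from a small neighborhood of $E_u^*$. This is false on the invariant set $E_u^*\setminus 0$ itself: if $\xi_0\in E_u^*(x_0)$ then $\kappa(E_u^*)$ being a sink means backward flow \emph{contracts} $|\xi|$, and $\exp(-tH_{\sigma_{H_1}})(x_0,\xi_0)$ converges to the zero section, not to $E_s^*$. So the source control from region~2 cannot be propagated to finite points of $E_u^*$. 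Your region~4 does not fill the hole either: the radial sink estimate \cite[Proposition~2.7]{DZ1} is a statement near $\kappa(E_u^*)\subset\partial\oT^*S^*\Mm$ and only yields control in a neighborhood of fiber infinity; points $(x_0,\xi_0)\in E_u^*$ with $|\xi_0|$ moderate---outside $\Ell_h(Q)$ but not yet in the sink neighborhood---remain uncovered.

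The paper handles this with a separate case (its Case~IV): since backward trajectories from any finite $(x_0,\xi_0)\in E_u^*\setminus 0$ enter $\Ell_h(Q)$ in finite time, one propagates forward from the elliptic region near the zero section (your region~1), not from the source. You should insert this step between your regions~3 and~4. Apart from this omission your outline matches the paper's six-case argument; one further bookkeeping remark is that the paper applies the radial estimates \cite[Propositions~2.6--2.7]{DZ1} to the \emph{unconjugated} operator $\tP_\epsi(\lambda)$ on $H^{r\pm s}_h$ (where $p_\epsi=\sigma_{H_1}$ is exactly homogeneous) and then invokes the microlocal equivalence~\eqref{Eq:6d}, rather than applying them directly to $[\tP_\epsi(\lambda)]_{r,s}$ on $L^2$ as you propose---the two routes are equivalent, but the former spares you from verifying that the $sh\log|\xi|_g\cdot S^0$ perturbation in~\eqref{Eq:2z} is compatible with the radial-point hypotheses.
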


\begin{proof} We define $v \de e^{G_{r,s}(h)} u \in L^2$ and we recall that $[A]_{r,s} \de e^{G_{r,s}(h)} A e^{-G_{r,s}(h)}$ when $A \in \Psi^m_h$.  Using a microlocal partition of unity it is sufficient to show the inequality
\begin{equation*}
|[A]_{r,s} v| \leq C h^{-1} |[\tP_\epsi(\lambda)]_{r,s} v| + O(h^\infty)|v|,
\end{equation*}
when $\WF_h(A)$ is supported in a small neighborhood of $(x_0,\xi_0) \in \oT^*S^*\Mm$ in each of the following cases:

\textbf{Case I: $(x_0,\xi_0) \in \Ell_h(Q)$.} Since $\{m,\sigma_{H_1}\} \geq 0$ by construction of $m$, \eqref{Eq:2k} shows that $q_{\epsi,r,s}(x_0,\xi_0) > 0$ when $s \geq 0$. In particular, $[\tP_\epsi(\lambda)]_{r,s}$ is elliptic at $(x_0,\xi_0)$. By the elliptic estimate, $|A_{r,s}v| \leq C |[\tP_\epsi(\lambda)]_{r,s} v| + O(h^\infty) |v|$.

\textbf{Case II: $(x_0,\xi_0) \in \kappa(E_s^*)$.} Here $\kappa : T^*S^*\Mm \rightarrow \p\oT^*S^*\Mm$ is the projection map defined in \cite[Appendix E.1]{DZ}. The operator $\tP_\epsi(\lambda)$ has semiclassical principal symbol $p_\epsi-iq_\epsi$. We note that $q_\epsi \geq 0$ everywhere and that $p_\epsi = \sigma_{H_1}$ is homogeneous of degree $1$ and independent of $h$. Hence we can apply the radial source estimate \cite[Proposition 2.6]{DZ1}. Fix $B_1 \in \Psi_h^0$ with wavefront set contained in the set $\{ \rho_0 m = 1\}$ so that on $WF_h(B_1)$ the space $H^{r,s}_h$ and $H^{r+s}_h$ are microlocally equivalent, see \eqref{Eq:6d}. There exist $s_0 > 0$ and $U_0$ neighborhood of $\kappa(E_s^*)$ in $\oT^*S^*\Mm$ such that  
\begin{equation*}
s \geq s_0, \ \ \WF_h(A) \subset U_0 \ \Rightarrow \ |Au|_{H^{r+s}_h} \leq Ch^{-1}|B_1 \tP_\epsi(\lambda) u|_{H^{r+s}_h} + O(h^\infty)|u|_{H^{-|r|-s}_h}. 
\end{equation*}
After possibly shrinking the size of $\WF_h(A)$ we can use that $H^{r,s}_h$ and $H^s_h$ are microlocally equivalent near $\WF_h(A)$, $\WF_h(B_1)$ to conclude that
\begin{equation*}
|Au|_{H^{r,s}_h} \leq Ch^{-1}|\tP_\epsi(\lambda) v|_{H^{r,s}_h} + O(h^\infty)|u|_{H^{-|r|-s}_h}.
\end{equation*}
Since $H^{r,s}_h$ embeds in $H^{-|r|-s}_h$, we deduce that for $v \de e^{G_{r,s}(h)} u$,
\begin{equation*}
|[A]_{r,s}v| \leq Ch^{-1}|[\tP_\epsi(\lambda)]_{r,s} v| + O(h^\infty)|v|.
\end{equation*}

\textbf{Case III: $(x_0,\xi_0) \in \oT^*S^*\Mm$, $(x_0,\xi_0) \notin \overline{E_0^*} \oplus \overline{E_u^*}$.} In this case $(x_0,\xi_0)$ admits a neighborhood $U$ in $\oT^*S^*\Mm$ such that 
\begin{equation*}
\mathrm{d}( \exp(-tH_{\sigma_{H_1}})(U), \kappa(E_s^*)) \rightarrow 0 \text{ as } t \rightarrow -\infty,
\end{equation*}
see \cite[Equation (3.2)]{DZ1}. Hence for $T$ large enough, $\exp(-TH_{\sigma_{H_1}})(U) \subset U_0$ where $U_0$ is the open set defined in Case II. We recall that $p_{\epsi,r,s} - i q_{\epsi,r,s}$ is the principal symbol of $[\tP_\epsi(\lambda)]_{r,s}$, and that $p_{\epsi,r,s} =  \sigma_{H_1} + h S^{1/2}$ and $q_{\epsi,r,s} \geq 0$. Since
$\sigma_{H_1}$ is homogeneous of degree $1$ we can apply \cite[Proposition 2.2]{DZ2}. It shows that if $B \in \Psi_h^0$ has wavefront set contained in $U_0$ then $|[A]_{r,s} v| \leq C |Bv| + Ch^{-1} |[\tP_\epsi(\lambda)]_{r,s} v| + O(h^\infty)|v|$. Combined with the result of Case II, we get
\begin{equation*}
|[A]_{r,s} v| \leq Ch^{-1}|[\tP_\epsi(\lambda)]_{r,s} v| + O(h^\infty)|v|.
\end{equation*}

\textbf{Case IV: $(x_0,\xi_0) \in E_u^* \setminus 0$.} We recall that the lifted geodesic flow $\exp(-tH_{\sigma_{H_1}})(x_0,\xi_0)$ is equal to $\left(e^{-tH_1}(x_0), ^Tde^{-tH_1}(x_0)^{-1}\xi_0\right)$.  We observe that $\exp(-tH_{\sigma_{H_1}})(x_0,\xi_0)$ converges to the zero section as $t \rightarrow +\infty$: because of $\xi_0 \in E_u^*(x_0) = E_s(x_0)$ and of~\eqref{Eq:7k},
\begin{equations*}
\left|^Tde^{-tH_1}(x_0)^{-1} \xi_0\right|_g = \left|^Tde^{tH_1}(e^{-tH_1}(x_0)) \xi_0\right|_g \leq Ce^{-ct}.
\end{equations*}
Since $\Ell_h(Q)$ contains the zero section, there exists $T > 0$ such that $\exp(-TH_{\sigma_{H_1}})(x_0,\xi_0)$ belongs to $\Ell_h(Q)$. We apply again \cite[Proposition 2.2]{DZ2}: if $\WF_h(A)$ is supported sufficiently close to $E_u^*$, there exists $B \in \Psi_h^0$ with wavefront set contained in the elliptic set of $Q$ such that $|[A]_{r,s} v| \leq C |Bv| + Ch^{-1} |[\tP_\epsi(\lambda)]_{r,s} v| + O(h^\infty)|v|$. Together with Case I, it implies
\begin{equation*}
|[A]_{r,s} v| \leq Ch^{-1} |[\tP_\epsi(\lambda)]_{r,s} v| + O(h^\infty)|v|.
\end{equation*}

\textbf{Case V: $(x_0,\xi_0) \in \kappa(E_u^*)$.} We recall that $q_\epsi \geq 0$ everywhere and that $p_\epsi = \sigma_{H_1}$ is homogeneous of degree $1$ and independent of $h$.  Hence we can apply \cite[Proposition 2.7]{DZ1}. Fix $B_1 \in \Psi_h^0$ elliptic on $\kappa(E_u^*)$, such that $\WF_h(B_1) \cap \overline{E_0^*} = \emptyset$ and such that $\rho_0 m = -1$ on $\WF_h(B_1)$. Then (after possibly increasing the value of $s_0$ given in Case II) there exist a neighborhood $U_1$ of $(x_0,\xi_0)$ and $B \in \Psi_h^0$ with $\WF_h(B) \subset \WF_h(B_1) \setminus \kappa(E_u^*)$ such that if $\WF_h(A) \subset U_1$ and $s \geq s_0$,
\begin{equation}\label{Eq:2l}
|Au|_{H^{r-s}_h} \leq C|Bu|_{H^{r-s}_h}+ C h^{-1}|B_1 \tP_\epsi(\lambda)u|_{H^{r-s}_h} + O(h^\infty)|u|_{H^{-|r|-s}_h}.
\end{equation}
Without loss of generality $U_1$ is small enough so that the spaces $H^{r-s}_h$, $H^{r,s}_h$ are microlocally equivalent on $\WF_h(A), \WF_h(B_1), \WF_h(B)$. Hence we can replace $H_h^{r-s}$ by $H^{r,s}_h$ in \eqref{Eq:2l}. In addition since $\WF_h(B_1)$ is supported away from $\kappa(E_0^*)$, it can be written as a finite sum of operators in $\Psi_h^0$ whose wavefront sets are supported near points $(x_0,\xi_0)$ satisfying Cases I-IV. Finally, since $H_h^{r,s}$ embedds in $H^{-s-|r|}_h$, the term $O(h^\infty)|u|_{H^{-|r|-s}_h}$ in the right hand side of \eqref{Eq:2l} is bounded by $O(h^\infty)|u|_{H^{r,s}_h}$. It follows that 
\begin{equation*}
|Au|_{H^{r,s}_h} \leq Ch^{-1}|\tP_\epsi(\lambda) u|_{H^{r,s}_h} + O(h^\infty)|u|_{H^{r,s}_h}.
\end{equation*}
Since $v = e^{G_{r,s}(h)} u$ we deduce that
\begin{equation*}
|[A]_{r,s}v| \leq Ch^{-1}|[\tP_\epsi(\lambda)]_{r,s} v| + O(h^\infty)|v|.
\end{equation*}

\textbf{Case VI: $(x_0,\xi_0) \in \overline{E_0^*} \setminus \Ell_h(Q)$.} In particular, $\xi_0 \neq 0$ and $\sigma_{H_1}(x_0,\xi_0) \neq 0$. By \eqref{Eq:2z}, we have $p_{\epsi,r,s} = \sigma_{H_1} + sh\log|\xi|_g \cdot S^0$. 
This shows that the operator $[\tP_\epsi(\lambda)]_{r,s}$ is elliptic at $(x_0,\xi_0)$. Therefore if $A$ has wavefront set contained in a small neighborhood of $(x_0,\xi_0)$ the elliptic estimate shows that $|[A]_{r,s}v| \leq C |[\tP_\epsi(\lambda)]_{r,s} v| + O(h^\infty)|v|$.

Since Cases I-VI cover the whole $\oT^*S^*\Mm$ this ends the proof of the theorem.
\end{proof}

\begin{proof}[Proof of Theorem \ref{thm:5}.] It is very similar to the end of the proof of \cite[Proposition 3.1]{DZ1}. Fix $R > 0$ and $r \in \R$. Proposition \ref{prop:3} shows that $|u|_{H^{r,s}_h} \leq C_R h^{-1} |\tP_\epsi(\lambda) u|_{H^{r,s}_h}$ as long as $0 < \epsi \leq h \leq h_0$ and $s$ is large enough. Theorem \ref{thm:2} applied with $N=2|s|+2|r|+1$ yields the estimate
\begin{equation*}
|u|_{H^{r,s}_h} \leq C_{R} h^{-2r-2s-1} |P_{\epsi}(\lambda)u|_{H^{r,s}_h} + O(h)|u|_{H^{r,s}_h}.
\end{equation*}
After possibly decreasing the value of $h_0$ we can absorb the term $O(h)|u|_{H^{r,s}_h}$ by the left hand side. We get $|u|_{H^{r,s}_h} \leq C_{R} h^{-2r-2s-1} |P_{\epsi}(\lambda)u|_{H^{r,s}_h}$. This estimate implies that the operator $P_{\epsi}(\lambda) : D^{r,s}_h \rightarrow H^{r,s}_h$ is injective.

To show the surjectivity of $P_\epsi(\lambda)$ we first note that the range of $P_{\epsi}(\lambda)$ is closed in $H^{r,s}_h$. Indeed, let $u_j \in D^{r,s}_h$ such that $P_{\epsi}(\lambda)u_j$ converges in $H^{r,s}_h$. Then $u_j$ is a Cauchy sequence in $H^{r,s}_h$ and it converges to some $u \in H^{r,s}_h$. We must show that $u \in D^{r,s}_h$. The sequence $P_{\epsi}(\lambda)u_j$ is bounded in $H^{r,s}_h$ hence it converges weakly; it follows that  $P_{\epsi}(\lambda)u \in H^{r,s}_h$. By Proposition \ref{prop:1d}, $\rho_1(\epsi^2\Delta) \Delta_\Ss u \in H^{r,s}_h$. In addition for any $\epsi > 0$, $\chi_1(\epsi^2 \Delta) \Delta_\Ss u \in C^\infty$. It follows that $\Delta_\Ss u \in H^{r,s}_h$ hence $H_1 u \in H^{r,s}_h$. Therefore $u$ belongs to the domain of $P_\epsi(\lambda)$ and the range of $P_\epsi(\lambda)$ is closed. 

To conclude we show that the range of $P_\epsi(\lambda)$ is dense in $H_h^{r,s}$. The dual of $H^{r,s}_h$ is $H^{-r,-s}_h$. Thus it suffices to prove that if $f \in H^{-r,-s}_h$ is such that $\lr{f,P_\epsi(\lambda)u} = 0$ for every $u \in H^{r,s}_h$ then $f=0$, or equivalently that $P_\epsi(\lambda)$ is injective. We have
\begin{equation*}
P_{\epsi}(\lambda) = -ih\epsi \Delta_\Ss -ih H_1-\lambda h-iQ, \ \ \ 
-P_\epsi(-\olambda)^* = -ih\epsi \Delta_\Ss +ih H_1-\lambda h-iQ. 
\end{equation*}
Therefore $-P_\epsi(-\olambda)^*$ is equal to $P_\epsi(\lambda)$ except for $H_1$ which is replaced by $-H_1$. For the dynamics of $-H_1$, $E_u^*$ is a radial source and $E_s^*$ a radial sink. Moreover the imaginary part of $-P_\epsi(\lambda)^*$ is non-positive. The space $H^{-r,-s}_h$ has low regularity near $E_s^*$ (the radial sink for $-H_1$) since it is microlocally equivalent to $H^{-r-s}_h$ near $E_s^*$. Similarly $H^{-r,-s}_h$ has high regularity near $E_s^*$ (the radial source for $-H_1$) since it is microlocally equivalent to $H^{-r-s}_h$ near $E_s^*$. Hence the same analysis as in the proof of Proposition \ref{prop:3} can be applied to $-P_\epsi(\lambda)^*$. It shows that for $s$ large enough and $0 < \epsi \leq h$ small enough, $\lambda \in \Dd(0,R)$,
\begin{equation*}
|f|_{H_h^{-r,-s}} \leq C_R h^{-2r-2s-1}|P_\epsi(-\olambda)^* f|_{H_h^{-r,-s}}.
\end{equation*}
This shows that $P_\epsi(\lambda)^*$ is injective. Hence the range of $P_\epsi(\lambda)$ is dense and $P_\epsi(\lambda)$ is surjective. This ends the proof of the theorem.\end{proof}

\subsection{Proof of Theorem \ref{thm:0}}\label{subsec:5.2}

We conclude the paper with a more precise version of Theorem \ref{thm:0}. A function $\epsi \in [0,\epsi_0) \mapsto f(\epsi)$ is said to be $C^1([0,\epsi_0))$ if $f$ is $C^1$ on $(0,\epsi_0)$ and $f'(\epsi)$ has a limit when $\epsi \rightarrow 0$. By induction we define the class $C^k([0,\epsi_0))$. In the following, we shall say that $f$ is smooth at $0$ if for every $k > 0$, there exists $\epsi_k > 0$ such that $f \in C^k([0,\epsi_k))$. The set $\Sigma(P_\epsi)$ (resp. $\Res(P_0)$) is defined as the $L^2$-spectrum of $P_\epsi = \frac{1}{i}(H_1+\epsi \Delta_\Ss)$ (resp. Pollicott--Ruelle resonances of $P_0 = \frac{1}{i}H_1$), with inclusion according to multiplicity. 

\begin{theorem}\label{thm:6} The set of accumulation points of $\Sigma(P_\epsi)$, as $\epsi \rightarrow 0$, is contained in $\Res(P_0)$. Conversely, if $\lambda_0 \in \Res(P_0)$ has multiplicity $m$, there exist $r_0 > 0, \epsi_0 > 0$ such that for every $\epsi \in (0,\epsi_0)$, $\Sigma(P_\epsi) \cap \Dd(\lambda_0,r_0) = \{\lambda_j(\epsi)\}_{j=1}^m$. 
Moreover,
\begin{enumerate}
\item[$(i)$] If $m=1$, then $\epsi \mapsto \lambda_1(\epsi)$ is smooth at $\epsi = 0$ and
\begin{equation}\label{eq:0m}
\lambda_1(\epsi) = \lambda_0 + i \epsi \int_{S^*\Mm} \lr{\nabla_\Ss u, \nabla_\Ss v} d\mu + O(\epsi^2),
\end{equation}
where $u, v$ are the left and right resonant states defined in Lemma \ref{lem:3}.
\item[$(ii)$] The finite-rank operators 
\begin{equation}\label{eq:0h}
\Pi_\epsi \de \dfrac{1}{2\pi i} \oint_{\p \Dd(\lambda_0,r_0)} (P_\epsi-\lambda)^{-1} d\lambda \ : \  C^\infty(S^*\Mm) \rightarrow \DD'(S^*\Mm)
\end{equation}
form a smooth trace-class family of operators at $\epsi=0$.
\end{enumerate}
\end{theorem}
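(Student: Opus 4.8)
The plan is to follow the strategy of \cite[\S 4]{DZ2}: realize both $\Sigma(P_\epsi)$ and $\Res(P_0)$ as the zero set of a single holomorphic function of $\lambda$ -- a Fredholm determinant -- and show that this function is smooth in $\epsi$ at $\epsi=0$, with value there the determinant attached to $P_0$. Concretely, I would fix $r$, $s\ge s_0$ and $h=h_0$ as in Theorem \ref{thm:5}, choose the cutoff in $Q=\chi(h^2\Delta)$ of the form $\chi=\psi^2$ with $\supp\psi$ so close to $0$ that $\rho_2(\epsi^2\Delta)\psi(h^2\Delta)=0$ and $\psi(h^2\Delta)\rho_1(\epsi^2\Delta)=0$ for every $\epsi\le h_0$, set $Q_0\de\psi(h^2\Delta)$ so $Q=Q_0^2$, and write $\mathrm{D}_\epsi(\lambda)\de\det\big(\Id+iQ_0P_\epsi(\lambda)^{-1}Q_0\big)$. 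From $h(P_\epsi-\lambda)=(\Id+iQP_\epsi(\lambda)^{-1})P_\epsi(\lambda)$, Theorem \ref{thm:5}, the hypoellipticity of $P_\epsi$ (which makes $L^2$-eigenfunctions smooth, so the $L^2$-spectrum of $P_\epsi$ coincides, with multiplicities, with its spectrum on $H^{r,s}_h$), and $\det(\Id+AB)=\det(\Id+BA)$, one gets that $\lambda\in\Sigma(P_\epsi)$ precisely when $\mathrm{D}_\epsi(\lambda)=0$, the order of vanishing equal to the algebraic multiplicity; since $Q_0$ is smoothing, $Q_0P_\epsi(\lambda)^{-1}Q_0$ is trace class and $\mathrm{D}_\epsi$ is holomorphic on $\Dd(0,R)$. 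The same construction at $\epsi=0$ -- with $P_0(\lambda)=h(P_0-\lambda)-iQ$ invertible by \cite[Proposition 3.1]{DZ1} and $Q_0P_0(\lambda)^{-1}Q_0$ trace class (the left $Q_0$ smooths the non-smooth resonant states) -- produces $\mathrm{D}_0$ with zero set $\Res(P_0)\cap\Dd(0,R)$, counted with multiplicity, as in \cite[Proposition 3.2]{DZ2}.

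The core of the argument will be to show that $\epsi\mapsto Q_0P_\epsi(\lambda)^{-1}Q_0$ extends to a family of trace-class operators, smooth at $\epsi=0$, locally uniformly in $\lambda\in\Dd(0,R)$, and equal to $Q_0P_0(\lambda)^{-1}Q_0$ there; I would do this in two steps matching the splitting $P_\epsi=P^\flat_\epsi+P^\sharp_\epsi$ advertised in the introduction. \emph{First, peel off the high frequencies.} With $\tP_\epsi(\lambda)$ as in \eqref{Eq:2i} -- invertible on $H^{r,s}_h$ with inverse bounded uniformly in $\epsi\le h_0$, by Proposition \ref{prop:3} together with the Fredholm/surjectivity argument of Theorem \ref{thm:5} -- one has $\tP_\epsi(\lambda)-P_\epsi(\lambda)=ih\epsi\rho_1(\epsi^2\Delta)\Delta_\Ss$, and conjugating the resolvent identity by $Q_0$ reduces matters to $\rho_1(\epsi^2\Delta)\epsi^2\Delta_\Ss P_\epsi(\lambda)^{-1}Q_0$; since $\rho_2(\epsi^2\Delta)Q_0=\rho_2(\epsi^2\Delta)Q=0$, the maximal hypoelliptic estimate of Proposition \ref{prop:1d} forces this to be $O(\epsi^N)$ for every $N$, so $Q_0P_\epsi(\lambda)^{-1}Q_0=Q_0\tP_\epsi(\lambda)^{-1}Q_0+O(\epsi^\infty)$ in trace norm. \emph{Second, handle the low frequencies.} The reason for passing to $\tP_\epsi(\lambda)$ is that $\tP_\epsi(\lambda)\in\Psi^1_h$ is a genuine semiclassical pseudodifferential operator, with symbol $\sigma_{H_1}-iq_\epsi$, $q_\epsi\ge0$, differing from that of $P_0(\lambda)$ only by the dissipative term $\chi_1((\epsi/h)^2|\xi|_g^2)(\epsi/h)\sigma_{\Delta_\Ss}$, which is $O(\epsi)$ on any fixed conic set -- exactly a viscous perturbation on the anisotropic spaces of \cite{DZ1}. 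Running the radial-point and propagation estimates underlying Proposition \ref{prop:3} with this $\epsi$-dependence and iterating the resolvent identity between $\tP_\epsi(\lambda)$ and $P_0(\lambda)$, as in \cite[\S 4]{DZ2}, I would extract a trace-norm expansion $Q_0\tP_\epsi(\lambda)^{-1}Q_0=Q_0P_0(\lambda)^{-1}Q_0+\sum_{k\ge1}\epsi^kT_k(\lambda)$ with $T_k$ holomorphic. Combining the two steps: $\mathrm{D}_\epsi(\lambda)=\mathrm{D}_0(\lambda)+\sum_{k\ge1}\epsi^kd_k(\lambda)$, $d_k$ holomorphic, uniformly on compact subsets of $\Dd(0,R)$.

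I expect this second step to be the main obstacle: it requires re-running the whole anisotropic-space construction of \cite{DZ1,DZ2} with the extra parameter $\epsi$ while keeping every estimate uniform. It should nonetheless be a faithful transcription of \cite[\S 4]{DZ2}, since $\tP_\epsi(\lambda)$, unlike $P_\epsi(\lambda)$, is a bona fide order-one pseudodifferential operator and the difference from the unperturbed symbol is not only $O(\epsi)$ but of the favorable (dissipative) sign; all of the input genuinely new here -- the maximal hypoelliptic estimates of Theorems \ref{thm:1}, \ref{thm:2} and their consequence Theorem \ref{thm:5} -- has already been spent, in the first step, to reduce $P_\epsi(\lambda)$ to $\tP_\epsi(\lambda)$.

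The rest is formal. Hurwitz's theorem, applied to $\mathrm{D}_\epsi\to\mathrm{D}_0$, gives that the accumulation points of $\Sigma(P_\epsi)$ lie in $\Res(P_0)$ and that near a resonance $\lambda_0$ of multiplicity $m$ there are, for $r_0$ and then $\epsi$ small, exactly $m$ eigenvalues $\lambda_1(\epsi),\dots,\lambda_m(\epsi)$ of $P_\epsi$ in $\Dd(\lambda_0,r_0)$. For $(ii)$: since $\mathrm{D}_0$ does not vanish on $\partial\Dd(\lambda_0,r_0)$, the operator $(\Id+iQP_\epsi(\lambda)^{-1})^{-1}$ is holomorphic there and, via a Neumann series off its $\epsi=0$ value together with the expansion above, admits a full $\epsi$-expansion; inserting this and $(P_\epsi-\lambda)^{-1}=hP_\epsi(\lambda)^{-1}(\Id+iQP_\epsi(\lambda)^{-1})^{-1}$ into the contour integral \eqref{eq:0h} shows the rank-$m$ operator $\Pi_\epsi$ is a smooth trace-class family at $\epsi=0$. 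For $(i)$: when $m=1$ the zero of $\mathrm{D}_0$ at $\lambda_0$ is simple, so the implicit function theorem applied to $(\epsi,\lambda)\mapsto\mathrm{D}_\epsi(\lambda)$ makes $\epsi\mapsto\lambda_1(\epsi)$ smooth at $0$; and \eqref{eq:0m} is first-order perturbation theory for the simple eigenvalue, i.e. $\dot\lambda_1(0)=\Trace\big((-i\Delta_\Ss)\Pi_0^{\mathrm{sp}}\big)$ (the term $\Trace(P_0\dot\Pi_0^{\mathrm{sp}})$ vanishing because $\Pi_0^{\mathrm{sp}}$ is an idempotent of constant rank), combined with $\Pi_0^{\mathrm{sp}}=-\,u\otimes v$ from Lemma \ref{lem:3} (the sign reflecting the convention in \eqref{eq:5n}) and an integration by parts via \eqref{eq:7e} using that the $X_j$ are divergence-free, which turns $\Trace\big((-i\Delta_\Ss)(-u\otimes v)\big)$ into $i\int_{S^*\Mm}\langle\nabla_\Ss u,\nabla_\Ss v\rangle\,d\mu$; the pairing is well defined because $\WF(u)\subset E_u^*$ and $\WF(v)\subset E_s^*$ do not meet.
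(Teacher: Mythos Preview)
Your overall architecture is right --- build a Fredholm determinant $D_\epsi(\lambda)$, show it is smooth in $\epsi$ with value $D_0$ at $\epsi=0$, then invoke Hurwitz and the implicit function theorem --- but the route you take to smoothness is both more complicated than necessary and has a real gap in Step~2.

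The paper does not split into high/low frequencies at this stage. It works directly with $P_\epsi(\lambda)^{-1}$, using the elementary resolvent identity
\[
P_\epsi(\lambda)^{-1}-P_{\epsi'}(\lambda)^{-1}=-i(\epsi-\epsi')\,P_{\epsi'}(\lambda)^{-1}\,h_0\Delta_\Ss\,P_\epsi(\lambda)^{-1},
\]
which is exact because $P_\epsi(\lambda)-P_{\epsi'}(\lambda)=-i(\epsi-\epsi')h_0\Delta_\Ss$ is \emph{linear} in $\epsi$. The point you miss is that Theorem~\ref{thm:5} gives invertibility of $P_\epsi(\lambda)$ not on a single space but on an entire scale $\HH^{-r}=H_{h_0}^{-r,s_0}$ for $r$ ranging over an interval; since $\Delta_\Ss$ loses two derivatives, the identity above shows $\epsi\mapsto P_\epsi(\lambda)^{-1}:\HH^{-r}\to\HH^{-r-2}$ is $C^1$, and an induction (with $r_k=2^{k+1}-2$) gives $C^k$ from $\HH^0$ to $\HH^{-r_k}$. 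Composing with the smoothing $Q$ then makes $QP_\epsi(\lambda)^{-1}$ a smooth family of trace-class operators, hence $D_\epsi(\lambda)=\det(\Id+iQP_\epsi(\lambda)^{-1})$ is smooth at $\epsi=0$. No passage through $\tP_\epsi$ is needed.

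Your detour through $\tP_\epsi$ is actually counterproductive: it destroys the linear dependence on $\epsi$. The perturbation $\tP_\epsi(\lambda)-P_0(\lambda)=-ih_0\epsi\,\chi_1(\epsi^2\Delta)\Delta_\Ss$ contains the $\epsi$-dependent cutoff $\chi_1(\epsi^2\Delta)$, so ``iterating the resolvent identity as in \cite[\S4]{DZ2}'' does not yield a Taylor series in $\epsi$; and as an operator this perturbation is order one in $\Psi_h^1$ with $S^1$-seminorms that are uniformly bounded but \emph{not} $O(\epsi)$ (its symbol reaches size $\sim1$ near $|\xi|_g\sim h_0/\epsi$), so a Neumann series does not converge. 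You would end up re-proving the paper's scale-of-spaces argument anyway, after first peeling off the $\rho_1(\epsi^2\Delta)$ piece to restore linearity --- which is exactly what the direct approach avoids.

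Your treatment of (i) via $\dot\lambda_1(0)=\Trace\big((-i\Delta_\Ss)\Pi_{\lambda_0}\big)$ is a legitimate alternative to the paper's more computational determinant expansion, and arguably cleaner once smoothness of $\Pi_\epsi$ is in hand; just be careful with the sign (with the convention \eqref{eq:5n} one has $\Pi_{\lambda_0}=u\otimes v$, not $-u\otimes v$), and note that the distributional pairing $\int\langle\nabla_\Ss u,\nabla_\Ss v\rangle\,d\mu$ is justified because $\WF(u)\subset E_u^*$ and $\WF(v)\subset E_s^*$ intersect only at the zero section.
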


\begin{rmk}\label{rem:1} Theorem \ref{thm:6} shows that as $\epsi \rightarrow 0^-$, the spectrum of $P_{-\epsi}^*$ converges to complex conjugates of Pollicott--Ruelle resonances. Because of the identity $P_\epsi = P_{-\epsi}^*$, we deduce that the spectrum of $P_\epsi$ converges to complex conjugates of Pollicott--Ruelle resonances as $\epsi \rightarrow 0^-$. 
\end{rmk}

\begin{proof} Fix $R > 0$ and $k_0$ a positive integer. For $1 \leq k \leq k_0$, let $r_k \de 2^{k+1}-2$. By Theorem \ref{thm:5} and \cite[Proposition 3.4]{DZ1} there are $s_0, h_0 > 0$ such that for every $0 \leq \epsi \leq h_0$, $r \in [\![0,r_{k_0}]\!]$ and $\lambda \in \Dd(0,R)$ the operator
\begin{equation*}
P_\epsi(\lambda) = -i\epsi h_0 \Delta_\Ss -ih_0 H_1 - \lambda h_0 - iQ
\end{equation*}
admits a right inverse on $\HH^{-r} \de H_{h_0}^{-r,s_0}$: there exists a bounded operator $P_\epsi(\lambda)^{-1} : \HH^{-r} \rightarrow \HH^{-r}$ with range contained in the domain of $P_\epsi(\lambda)$ such that $P_\epsi(\lambda) P_\epsi(\lambda)^{-1} = \Id_{\HH^{-r}}$. We show below that for every $r \in [\![0, r_{k_0}-r_k]\!]$, the operator $P_\epsi(\lambda)^{-1} : \HH^{-r} \rightarrow \HH^{-r-r_k}$ is $C^k([\![0,h_0))$. We proceed by induction on $k$.

We start with $k=1$. For every $r \in [\![0, r_{k_0}]\!] \cap [\![-2, r_{k_0}-2]\!] = [\![0,r_{k_0}-r_1]\!]$, the operator $P_\epsi(\lambda)^{-1}$ maps $\HH^{-r}$ to itself and $\HH^{-r-2}$ to itself. This fact, together with the identity
\begin{equation}\label{eq:0n}
P_{\epsi}(\lambda)^{-1} - P_{\epsi'}(\lambda)^{-1} = -i (\epsi-\epsi') P_{\epsi'}(\lambda)^{-1} h_0 \Delta_\Ss P_{\epsi}(\lambda)^{-1}
\end{equation}
shows that $\epsi \in [0,h_0) \mapsto P_{\epsi}(\lambda)^{-1} : \HH^{-r} \rightarrow \HH^{-r-2}$ is differentiable (in particular continuous) with
\begin{equation}\label{eq:0l}
\p_\epsi P_\epsi(\lambda) =  -i P_{\epsi}(\lambda)^{-1} h_0 \Delta_\Ss P_{\epsi}(\lambda)^{-1}.
\end{equation}
The right hand side of \eqref{eq:0l} is continuous, hence $\epsi \in [0,h_0) \mapsto P_{\epsi}(\lambda)^{-1} : \HH^{-r} \rightarrow \HH^{-r-2}$ is $C^1([0,h_0))$.

Assume now that $k \leq {k_0}-1$ and that for every $r \in [\![0, r_{k_0}-r_k]\!]$, $P_\epsi(\lambda)^{-1} : \HH^{-r} \rightarrow \HH^{-r-r_k}$ is $C^k([\![0,h_0))$. The identity \eqref{eq:0l} shows that $\p_\epsi P_\epsi(\lambda) : \HH^{-r} \rightarrow \HH^{-r-2r_k-2}$ is also $C^k([0,h_0))$ as long as $r \in [\![0,r_{k_0}-r_k]\!] \cap [\![-r_k - 2, r_{k_0}-2r_k-2]\!]$. Since $r_{k+1} = 2r_k-2$, the operator $\p_\epsi P_\epsi(\lambda) : \HH^{-r} \rightarrow \HH^{-r-r_{k+1}}$ is $C^k([\![0,h_0))$ as long as $r \in [0, r_{k_0}-r_{k+1}]\!]$. This implies that $P_\epsi(\lambda) : \HH^{-r} \rightarrow \HH^{-r-r_{k+1}}$ is $C^{k+1}([0,h_0))$ for the above range of $r$. This completes the induction process.

It follows that the operator $P_\epsi(\lambda)^{-1} : \HH^0 \rightarrow \HH^{-r_{k_0}}$ is $C^{k_0}([0,h_0))$. We recall that $Q$ is a smoothing operator. In particular, $Q$ maps $\HH^{-r_{k_0}}$ to the Sobolev space $H^N$ for any $N$. It follows that $Q P_\epsi(\lambda)$ is a trace-class operator with holomorphic dependence in $\lambda \in \Dd(0,R)$ and $C^{k_0}$ dependence in $\epsi \in [0,h_0)$. Since ${k_0}$ was arbitrary, $Q P_\epsi(\lambda)$ is smooth at $\epsi = 0$. For $\epsi \in [0,h_0)$ and $\lambda \in \Dd(0,R)$, we define the Fredholm determinant 
\begin{equation*}
D_\epsi(\lambda) = \Det_{\HH^0}(\Id + i Q P_\epsi(\lambda)^{-1}),
\end{equation*}
which depends holomorphically in $\lambda$, and which is smooth at $\epsi=0$.

The operator $h_0(P_\epsi - \lambda) = P_\epsi(\lambda) + iQ$ is Fredholm, because where $P_\epsi(\lambda)$ admits a right inverse on $\HH^0$ and $Q$ is compact. Hence, the $\HH^0$-spectrum of $P_\epsi$ in $\Dd(0,R)$ is discrete and equal to the zero set of $D_\epsi(\lambda)$. When $\epsi \neq 0$ the operator $P_\epsi$ is subelliptic. Consequently, $\HH^0$-eigenvectors of $P_\epsi$ must belong to the (standard) Sobolev space $H^2$, thus to the domain of $P_\epsi$ on $L^2$. Conversely, $L^2$-eigenvectors of $P_\epsi$ must belong to the (standard) Sobolev space $H^{s_0}$, thus to $\HH^0$. This shows that for $\epsi \neq 0$, the $L^2$-spectrum and $\HH^0$-spectrum of $P_\epsi$ in $\Dd(0,R)$ are equal, and the $L^2$-eigenvalues of $P_\epsi$  in $\Dd(0,R)$ are exactly the~zeroes~of~$D_\epsi(\lambda)$.

For $\epsi > 0$, $D_\epsi(\lambda)$ is a holomorphic function of $\lambda$ whose zero set is the $L^2$-spectrum of $P_\epsi$ in $\Dd(0,R)$, and the zero set of $D_0(\lambda)$ is the Pollicott--Ruelle spectrum of $P_0$ in $\Dd(0,R)$ -- see \cite[Proposition 3.2]{DZ}. Since $D_\epsi(\lambda)$ is smooth at $\epsi = 0$, the first part of the theorem follows from an application of Hurwitz's theorem.  

If $\lambda_0$ is a Pollicott--Ruelle resonance of $P_0$ and $\lambda_1(\epsi)$ is the unique eigenvalue of $P_\epsi$ converging to $\lambda_0$, the implicit function theorem shows that $\epsi \mapsto \lambda_1(\epsi)$ is smooth. We compute now the leading terms in the expansion \eqref{eq:0m}, inspired by the method of \cite[\S 3.1]{D}. Denote by $\Res(P_0)$ the set of Pollicott--Ruelle resonances of $P_0 = \frac{1}{i}H_1$ and fix $K$ be a compact subset of $\Dd(0,R) \setminus \Res(P_0)$. For every $\lambda \in K$, $D_0(\lambda) \neq 0$ and the operator $\Id + iQ P_0(\lambda)^{-1} : \HH^0 \rightarrow \HH^0$ is invertible. Therefore, for every $0 < \epsi \leq h_0$ and $\lambda \in K$,
\begin{equation*}
\Id + iQ P_\epsi(\lambda)^{-1} = \left(\Id + iQP_0(\lambda)^{-1}\right) \cdot \left( \Id + \left( \Id + iQP_0(\lambda)^{-1}\right)^{-1} iQ \left(P_\epsi(\lambda)^{-1} - P_0(\lambda)^{-1}\right) \right).
\end{equation*}
Uniformly for $\lambda \in K$, the operator $\left( \Id + iQP_0(\lambda)^{-1} \right)^{-1}$ is bounded on $\HH^0$ and by \eqref{eq:0n}, $Q \left(P_\epsi(\lambda)^{-1} - P_0(\lambda)^{-1} \right)$ has trace-class norm $O(\epsi)$. The identity \eqref{eq:0n} implies for $\lambda \in K$,
\begin{equations*}
D_\epsi(\lambda) = D_0(\lambda) \cdot \Det_{\HH^0} \left( \Id + i\left( \Id + iQP_0(\lambda)^{-1}\right)^{-1} Q \left(P_\epsi(\lambda)^{-1} - P_0(\lambda)^{-1}\right) \right) \\
 = D_0(\lambda) \cdot \left( 1 + \epsi h_0 \Trace_{\HH^0}\left(\left( \Id + iQP_0(\lambda)^{-1}\right)^{-1}  Q  P_0(\lambda)^{-1} \Delta_\Ss P_{\epsi}(\lambda)^{-1} \right) + O(\epsi^2) \right).
 \end{equations*}
The operator $Q P_0(\lambda)^{-1} \Delta_\Ss$ extends to a trace-class operator in $\HH^0$. Because of the identity \eqref{eq:0n}, we have uniformly for $\lambda \in K$,
\begin{equations*}
h_0 \Trace_{\HH^0}\left(\left( \Id + iQP_0(\lambda)^{-1}\right)^{-1}  Q  P_0(\lambda)^{-1} \Delta_\Ss P_{\epsi}(\lambda)^{-1} \right) = f_1(\lambda) + O(\epsi), \\ 
f_1(\lambda) \de h_0 \Trace_{\HH^0}\left(\left( \Id + iQP_0(\lambda)^{-1}\right)^{-1}  Q  P_0(\lambda)^{-1} \Delta_\Ss P_0(\lambda)^{-1} \right).
\end{equations*}
It follows that uniformly for $\lambda \in K$, 
\begin{equation}\label{eq:0i}
D_\epsi(\lambda) = D_0(\lambda) \cdot \left( 1 + f_1(\lambda) \epsi + O(\epsi^2) \right).
\end{equation}

In \eqref{eq:0i}, the function $D_\epsi$ is holomorphic on $\Dd(0,R)$ and $f_1(\lambda)$ is meromorphic in $\Dd(0,R)$, with poles in $\Dd(0,R) \cap \Res(P_0)$. Therefore we can apply \cite[Lemma 4.4]{D} with $E = \Dd(0,R)$, $S_0 = \Res(P_0)$, $D_\epsi(\lambda)/D_0(\lambda) = 1 + f_1(\lambda) \epsi + O(\epsi^2)$ and $g(\lambda,\epsi) = D_0(\lambda)$ (strictly speaking, \cite[Lemma 4.4]{D} is stated there with $E = \C$ or $\C \setminus 0$; but it also holds without change in the proof when $E = \Dd(0,R)$). It shows that \eqref{eq:0i} is valid uniformly for $\lambda \in \Dd(0,R) \setminus \Res(P_0)$ and that the function $D_0(\lambda) f_1(\lambda)$ is holomorphic on $\Dd(0,R)$.

Let $\lambda_0 \in \Dd(0,R)$ be a simple resonance of $\Res(P_0)$. We now work with $f_1(\lambda)$ for $\lambda$ in a small punctured disk $\DD \setminus \lambda_0 \subset \Dd(0,R)$, so that $\lambda_0$ is the only resonance of $P_0$ in $\DD$. We have
\begin{equations*}
f_1(\lambda) = h_0 \Trace_{\HH^0}\left(\left( \Id + iQP_0(\lambda)^{-1}\right)^{-1}  Q  P_0(\lambda)^{-1} \Delta_\Ss P_0(\lambda)^{-1} \right) \\
 = h_0 \Trace_{\HH^0}\left(P_0(\lambda)^{-1} \left( \Id + iQP_0(\lambda)^{-1}\right)^{-1}  Q  P_0(\lambda)^{-1} \Delta_\Ss  \right) = \Trace_{\HH^0}\left( (P_0-\lambda)^{-1}  Q  P_0(\lambda)^{-1} \Delta_\Ss  \right).
\end{equations*}
In the above we used the cyclicity of the trace and the identity
\begin{equation*}
P_0(\lambda)^{-1} \left( \Id + iQP_0(\lambda)^{-1}  \right)^{-1} = (P_0(\lambda) + iQ)^{-1} = h_0^{-1} (P_0 - \lambda )^{-1}.
\end{equation*}

Because of \eqref{eq:0k} and since $P_0(\lambda)^{-1}$ is holomorphic near $\lambda_0$, we can write
\begin{equation}\label{eq:0ya}
(P_0-\lambda)^{-1}  Q  P_0(\lambda)^{-1} \Delta_\Ss = (i(P_0-\lambda)^{-1} - h_0 P_0(\lambda)^{-1}) \Delta_\Ss = \dfrac{i u \otimes v \Delta_\Ss}{\lambda-\lambda_0} + B(\lambda),
\end{equation}
where $B(\lambda)$ denotes a holomorphic family of operators near $\lambda_0$. The right hand side of \eqref{eq:0ya} is  trace-class  on $\HH^0$ and the operator $u \otimes v \Delta_\Ss$ is of rank $1$. Therefore $B(\lambda)$ is trace-class on $\HH^0$ and $f_0(\lambda) \de \Trace_{\HH^0}(B(\lambda))$ is holomorphic. It follows that
\begin{equation*}
f_1(\lambda)-f_0(\lambda) = \dfrac{i\Trace_{\HH^0}\left(u \otimes v \Delta_\Ss \right)}{\lambda-\lambda_0} = \dfrac{i \Trace_{\HH^0}\left(\Delta_\Ss u \otimes v  \right)}{\lambda-\lambda_0} = \dfrac{i}{\lambda_0-\lambda}\int_{S^*\Mm} \lr{\nabla_\Ss u, \nabla_\Ss v}.
\end{equation*}
In the last equality we used that $\Delta_\Ss u$ and $v$ have wavefront sets contained in $E_u^*$ and $E_s^*$, respectively. Hence the trace of the operator $\Delta_\Ss u \otimes v$ is given by integrating the kernel $\Delta_\Ss u(x) v(y)$ along the diagonal $\{x=y\}$ according to \cite[Proposition 7.6]{GS}. The operator $\nabla_\Ss$ was defined in \S \ref{sec:2.1} and the scalar product $\lr{\cdot, \cdot}$ is inherited from the Euclidean structure on the fibers of $T^*\Mm$.

Combining the above, we obtain that uniformly in $\epsi$ small enough and $\lambda \in \DD \setminus \lambda_0$, 
\begin{equations*}
D_\epsi(\lambda) = D_0(\lambda) - i \epsi \dfrac{D_0(\lambda)}{\lambda-\lambda_0} \int_{S^*\Mm} \lr{\nabla_\Ss u, \nabla_\Ss v} + \epsi D_0(\lambda) f_0(\lambda) + O(\epsi^2) \\
 = D'_0(\lambda_0) \left( \lambda-\lambda_0 - i \epsi \int_{S^*\Mm} \lr{\nabla_\Ss u, \nabla_\Ss v} + O(\epsi(\lambda-\lambda_0)) + O(\epsi^2) \right).
\end{equations*}
Recall that $\lambda_1(\epsi)$ is the unique eigenvalue of $P_\epsi$ near $\lambda_0$. In particular $D_\epsi(\lambda_1(\epsi)) = 0$. Since $\epsi \mapsto \lambda_1(\epsi)$ is smooth, $\lambda_1(\epsi) = \lambda_0 + O(\epsi)$. This yields
\begin{equation*}
\lambda_1(\epsi) = \lambda_0 + i \epsi \int_{S^*\Mm} \lr{\nabla_\Ss u, \nabla_\Ss v} + O(\epsi^2).
\end{equation*}
This concludes the proof of $(i)$.

For $(ii)$, we fix $k_0 > 0$ and we recall that $P_\epsi(\lambda)^{-1} : \HH^0 \rightarrow \HH^{-r_{k_0}}$ is $C^{k_0}([0,h_0))$. Since $h_0(P_\epsi-\lambda) = P_\epsi(\lambda)+Q$, where $Q$ is smoothing, the 	family $P_\epsi-\lambda : \HH^{-r_{k_0}} \rightarrow \HH^0$ is Fredholm with $C^{k_0}$ dependence in $\epsi$. Hence, $(P_\epsi-\lambda)^{-1}$ is a meromorphic family of operators with poles of finite rank, with $C^{k_0}$ dependence in $\epsi$. This shows that the family of operators $\epsi \rightarrow \Pi_\epsi : \HH^0 \rightarrow \HH^{-r_{k_0}}$ given by \eqref{eq:0h} is $C^{k_0}([0,h_0))$. A fortiori, $\epsi \mapsto \Pi_\epsi : C^\infty(S^*\Mm) \rightarrow \DD'(S^*\Mm)$ is also $C^{k_0}([0,h_0))$, hence smooth at $\epsi = 0$. \end{proof}

\end{document}